\newtheorem{Theorem}{Theorem}[section]
\newtheorem{Proposition}[Theorem]{Proposition}
\newtheorem{Lemma}[Theorem]{Lemma}
\newtheorem{Corollary}[Theorem]{Corollary}
\newtheorem{Definition}[Theorem]{Definition}
\newtheorem{Remark}[Theorem]{Remark}
\newcommand{\norm}[1]{\lVert #1\rVert}
\newcommand{\db}{\overline\partial}  
\newcommand{\ov}{\overline} 
\newcommand{\wi}{\widetilde}
\DeclareMathOperator{\ric}{Ric}
\DeclareMathOperator{\codim}{codim}
\DeclareMathOperator{\Dom}{Dom}
\newcommand{\cali}[1]{\mathscr{#1}}
\newcommand{\cO}{\cali{O}} \newcommand{\cI}{\cali{I}}
\begin{document}

\title{Equidistribution results for singular metrics on line bundles} 
\author{Dan Coman \and George Marinescu}
\thanks{D. Coman is supported by the NSF Grant DMS-0900934}
\thanks{G. Marinescu is partially supported by SFB TR 12}
\subjclass[2010]{Primary 32L10; Secondary 32U40, 32W20, 53C55}
\date{August 25, 2011}
\address{D. Coman: dcoman@@syr.edu, Department of Mathematics, Syracuse
University, Syracuse, NY 13244-1150, USA}
\address{G. Marinescu: gmarines@@math.uni-koeln.de, Universit\"at zu K\"oln, Mathematisches Institut, Weyertal 86-90, 50931 K\"oln, GERMANY}

\pagestyle{myheadings} 

\begin{abstract}
\noindent Let $(L,h)$ be a holomorphic line bundle with a positively curved singular Hermitian metric over  a complex manifold $X$. One can define naturally the sequence of Fubini-Study currents $\gamma_p$ associated to the space of $L^2$-holomorphic sections of $L^{\otimes p}$. Assuming that the singular set of the metric is contained in a compact analytic subset $\Sigma$ of $X$ and that the logarithm of the Bergman kernel function of $L^{\otimes p}\mid_{_{X\setminus\Sigma}}$ grows like $o(p)$ as $p\to\infty$, we prove the following:

\par 1) the currents $\gamma_p^k$ converge weakly on the whole $X$ to $c_1(L,h)^k$, where $c_1(L,h)$ is the curvature current of $h$. 

\par 2) the expectations of the common zeros of a random $k$-tuple of $L^2$-holomorphic sections converge weakly in the sense of currents to $c_1(L,h)^k$. \\
Here $k$ is so that $\codim\Sigma\geq k$. Our weak asymptotic condition on the Bergman kernel function is known to hold in many cases, as it is a consequence of its asymptotic expansion. We also prove it here in a quite general setting. We then show that many important geometric situations (singular metrics on big line bundles, K\"ahler-Einstein metrics on Zariski-open sets, artihmetic quotients) fit into our framework. 
\end{abstract}

\maketitle
\tableofcontents

\section{Introduction}
\par Let $X$ be a compact complex manifold of dimension $n$, $L\longrightarrow X$ be a positive holomorphic line bundle, and $h$ be a smooth Hermitian metric on $L$ whose curvature $c_1(L,h)$ is a positive (1,1) form on $X$. Let $\Phi_p:X\longrightarrow{\mathbb P}^{d_p-1}$ be the Kodaira map defined by an orthonormal basis of $H^0(X,L^p)$ with respect to the inner product given by the metric induced by $h$ on $L^p:=L^{\otimes p}$ and a fixed volume form on $X$, where $d_p=\dim H^0(X,L^p)$. The pull-back $\Phi_p^\star(\omega_{FS})$ of the Fubini-Study form $\omega_{FS}$ is a smooth (1,1) form for all $p$ sufficiently large, since $\Phi_p$ is an embedding by Kodaira's embedding theorem. A theorem of Tian \cite{Ti90} (with improvements by Ruan \cite{Ru98}) asserts that $\frac{1}{p}\,\Phi_p^\star(\omega_{FS})\to c_1(L,h)$ as $p\to\infty$, in the ${\mathcal C}^\infty$ topology on $X$.

\par Tian's theorem is a consequence of the asymptotic expansion of the Bergman kernel function associated to the inner product on $H^0(X,L^p)$ mentioned  above. In the context of positive line bundles this asymptotic expansion is proved in various forms in \cite{Ti90,Ca99,Z98,DLM04a,MM04,MM07,MM08,BBS08}. For line bundles endowed with arbitrary smooth Hermitian metrics the Bergman kernel function behavior and important consequences are studied in \cite{Be09} and \cite{BB10}.

\par In the case of holomorphic Hermitian line bundles over complete Hermitian manifolds the asymptotic expansion of the Bergman kernel function associated to the corresponding spaces of $L^2$-holomorphic sections was proved in \cite{MM08} (see also \cite{MM04,MM07}). In particular, a version of Tian's theorem was obtained for a big line bundle $L$ over a (compact) manifold $X$. Such a line bundle admits a singular Hermitian metric $h$, smooth outside a proper analytic subvariety $\Sigma\subset X$, and whose curvature current $c_1(L,h)$ is strictly positive. It is shown in \cite[Section 6.2]{MM07} that there exist a smooth positively curved Hermitian metric $h_\varepsilon$ on $L\mid_{_{X\setminus\Sigma}}$, which is a small perturbation of $h$, and a smooth positive (1,1) form $\Theta$ defining a generalized Poincar\'e metric on $X\setminus\Sigma$, so that the following hold. If $H^0_{(2)}(X\setminus\Sigma,L^p)$ is the space of $L^2$-holomorphic sections of $L^p\mid_{_{X\setminus\Sigma}}$ relative to the metrics $h_\varepsilon$ and $\Theta$ then $H^0_{(2)}(X\setminus\Sigma,L^p)\subset H^0(X,L^p)$, so a Kodaira map $\Phi_p:X\dashrightarrow{\mathbb P}^{d_p-1}$ can be defined by using an orthonormal basis of $H^0_{(2)}(X\setminus\Sigma,L^p)$. Let $\gamma_p=\Phi_p^\star(\omega_{FS})$ and $\omega=c_1(L\mid_{_{X\setminus\Sigma}},h_\varepsilon)$. Then $\frac{1}{p}\,\gamma_p\to\omega$ as $p\to\infty$, locally uniformly in the ${\mathcal C}^\infty$ topology on $X\setminus\Sigma$.

\par Since $\gamma_p$ are currents on $X$ it is natural to try and study the weak convergence of the sequence $\{\gamma_p/p\}$, and to ask whether a global version of Tian's theorem holds in this setting. We will show that this is indeed the case. 

\medskip

\par Let us work in the following more general setting:

\medskip

(A) $X$ is a complex manifold of dimension $n$ (not necessarily compact), $\Sigma$ is a compact analytic subvariety of $X$, and $\Omega$ is a smooth positive $(1,1)$ form on $X$.

\smallskip

(B) $(L,h)$ is a holomorphic line bundle on $X$ with a singular (semi)positively curved Hermitian metric $h$ which is continuous on $X\setminus\Sigma$.

\smallskip

(C) The volume form on $X\setminus\Sigma$ is $f\Omega^n$, where $f\in L^1_{loc}(X\setminus\Sigma,\Omega^n)$ verifies $f\geq c_x>0$ $\Omega^n$-a.e. in a neighborhood $U_x$ of each $x\in(X\setminus\Sigma)\cup\Sigma^{n-1}_{reg}$. Here $\Sigma^{n-1}_{reg}$ is the set of regular points $y$ where $\dim_y\Sigma=n-1$. 

\medskip

\par We denote the curvature current of $h$ by $\gamma=c_1(L,h)$ and consider the space $H^0_{(2)}(X\setminus\Sigma,L^p)$ of $L^2$-holomorphic sections of $L^p\mid_{_{X\setminus\Sigma}}$ relative to the metric $h_p$ induced by $h$ and the volume form  $f\Omega^n$ on $X\setminus\Sigma$, endowed with the inner product
$$(S,S')_p=\int_{X\setminus\Sigma}\langle S,S'\rangle_{h_p}\,f\Omega^n\,,\;\;{\rm where}\;\langle S,S'\rangle_{h_p}=h_p(S,S'),\;S,S'\in H^0_{(2)}(X\setminus\Sigma,L^p).$$ 
We let $\|S\|_p^2=(S,S)_p$. Since $H^0_{(2)}(X\setminus\Sigma,L^p)$ is separable, let $\{S^p_j\}_{j\geq1}$ be an orthonormal basis and denote by $P_p$ the Bergman kernel function defined by 
\begin{equation}\label{e:Bergfcn}
P_p(x)=\sum_{j=1}^\infty|S^p_j(x)|_{h_p}^2,\;\;|S^p_j(x)|_{h_p}^2:=\langle S_j^p(x),S_j^p(x)\rangle_{h_p},\;x\in X\setminus\Sigma.
\end{equation}
Note that this definition is independent of the choice of basis, and the function $P_p$ is continuous on $X\setminus\Sigma$ (see Section \ref{S:mtpf}). 

\par Next we define the Fubini-Study currents $\gamma_p$ on $X\setminus\Sigma$ by 
\begin{equation}\label{e:gammap}
\gamma_p\mid_{_U}=\frac{1}{2}\,dd^c\log\left(\sum_{j=1}^\infty|s_j^p|^2\right),\;U\subset X\setminus\Sigma\;\,{\rm open}\,, 
\end{equation}
where $d^c=\frac{1}{2\pi i}(\partial-\overline\partial)$, $S_j^p=s_j^pe^{\otimes p}$, and $e$ is a local holomorphic frame for $L$ on $U$. 

\par One of our main results is the following:

\begin{Theorem}\label{T:mt} If $X,\Sigma,(L,h),f,\Omega$ verify assumptions (A)-(C) then $H^0_{(2)}(X\setminus\Sigma,L^p)\subset H^0(X,L^p)$ and $\gamma_p$ extends to a positive closed current on $X$ defined locally by formula \eqref{e:gammap} and which is independent of the choice of basis $\{S_j^p\}_{j\geq1}$. Assume further that 
\begin{equation}\label{e:mainhyp}
\lim_{p\to\infty}\frac{1}{p}\,\log P_p(x)=0, \text{ locally uniformly on } X\setminus\Sigma.
\end{equation}
Then $\frac{1}{p}\,\gamma_p\to\gamma$ weakly on $X$. If, in addition, $\dim\Sigma\leq n-k$ for some $2\leq k\leq n$, then the currents $\gamma^k$ and $\gamma_p^k$ are well defined on $X$, respectively on each relatively compact neighborhood of $\Sigma$, for all $p$ sufficiently large. Moreover, $\frac{1}{p^k}\,\gamma_p^k\to\gamma^k$ weakly on $X$.
\end{Theorem}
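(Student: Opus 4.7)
My plan is to prove this in four stages, matching the four assertions. Working locally near $x_0 \in \Sigma$ in a coordinate ball $B$ with $L$ trivialized by a frame $e$ and weight $|e|_h = e^{-\varphi}$, a section $S = se^{\otimes p} \in H^0_{(2)}(X\setminus\Sigma,L^p)$ has $s$ holomorphic on $B\setminus\Sigma$ with $\int_{B\setminus\Sigma}|s|^2 e^{-2p\varphi}f\,\Omega^n < \infty$. Since $\varphi$ is psh, hence locally bounded above, $e^{-2p\varphi}$ has a positive lower bound on compacts of $B$; near $\Sigma^{n-1}_{reg}$, hypothesis (C) additionally gives $f \geq c > 0$, forcing $s \in L^2_{loc}(B)$, which extends across the divisor by the $L^2$ Riemann extension theorem. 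All remaining points of $\Sigma$ have local codimension $\geq 2$, where Hartogs/Riemann extension applies. This yields $H^0_{(2)}(X\setminus\Sigma,L^p) \subset H^0(X,L^p)$.

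With $s_j^p$ holomorphic on $B$, the partial sums $u_N := \tfrac{1}{2}\log\sum_{j\leq N}|s_j^p|^2$ are psh on $B$, increasing, and their full sum is basis-independent by unitary invariance. The crux is a uniform-in-$N$ upper bound on compacts of $B$: once it is secured, the upper semicontinuous regularization $u^* := (\sup_N u_N)^*$ is psh on $B$ and $\gamma_p := dd^c u^*$ patches to a positive closed current on $X$ coinciding with \eqref{e:gammap} on $X\setminus\Sigma$. I obtain the bound by applying the mean value inequality to the holomorphic function $s$ associated to a unit section $S$ on a small ball $B'$---centered at $x$ when the volume form is non-degenerate near $x$, or shifted so that $\overline{B'}\subset B\setminus\Sigma$ when $x$ lies on a codim-$\geq 2$ stratum of $\Sigma$---combined with a positive lower bound of $e^{-2p\varphi}f$ on $B'$, which converts $\|S\|_p$ into an $L^2$ bound for $s|_{B'}$ and thence, by mean value or Cauchy estimates, into a pointwise bound on $|s(x)|^2$ uniform over unit sections.

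For the third assertion, set $v_p := u^*/p$ locally, so $\tfrac{1}{p}\gamma_p = dd^c v_p$. On $X\setminus\Sigma$ the identity $v_p - \varphi = \tfrac{1}{2p}\log P_p$ together with \eqref{e:mainhyp} gives $v_p \to \varphi$ locally uniformly there; combined with the upper bound from the previous stage, the sequence $(v_p)$ is locally uniformly bounded above on all of $X$. The standard $L^1_{loc}$-compactness of psh functions then forces every subsequential limit of $(v_p)$ to be psh on $X$ and equal to $\varphi$ on the dense set $X\setminus\Sigma$, hence equal to $\varphi$ everywhere. Therefore $v_p \to \varphi$ in $L^1_{loc}(X)$, and $\tfrac{1}{p}\gamma_p \to \gamma$ weakly on $X$.

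Finally, when $\dim\Sigma \leq n-k$, both $\varphi$ and $v_p$ are continuous on $X\setminus\Sigma$, whose complement has complex codimension $\geq k$; by Bedford--Taylor/Demailly theory, $\gamma^k = (dd^c\varphi)^k$ is globally well defined on $X$ and $\gamma_p^k = (dd^c u^*)^k$ on any fixed relatively compact neighborhood of $\Sigma$ once $p$ is large. The Bedford--Taylor continuity theorem for Monge--Amp\`ere operators, applied with $v_p \to \varphi$ locally uniformly on $X\setminus\Sigma$ and $(v_p)$ locally uniformly bounded above, yields weak convergence $(dd^c v_p)^k \to (dd^c\varphi)^k$ on $X$, i.e.\ $\tfrac{1}{p^k}\gamma_p^k \to \gamma^k$. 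The main anticipated obstacle is the uniform upper bound in the second stage at points of $\Sigma$ where (C) supplies no positive lower bound on $f$---namely singular points of $\Sigma$ and codim-$\geq 2$ strata---and the shifted-ball mean value argument combined with the upper bound on $\varphi$ should provide the needed estimate; this bound underpins both the definition of $\gamma_p$ as a current on $X$ and the continuity of wedge powers in the last stage.
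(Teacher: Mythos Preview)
Your first three stages follow the paper's route closely, with one imprecision: the ``shifted-ball'' mean value argument at codim-$\geq 2$ points of $\Sigma$ does not bound $|s(x)|$, since the mean value inequality on a ball $B'\not\ni x$ controls $|s|$ only at the center of $B'$. The paper closes this by choosing coordinates near $x\in\Sigma$ so that $\Sigma$ lies in a cone $\{|z_n|\leq\max(|z_1|,\dots,|z_{n-1}|)\}$ and applying the maximum principle on complex lines parallel to the $z_n$-axis; this propagates the bound from a compact $K\subset B\setminus\Sigma$ to a full neighborhood of $x$. This is a routine fix.

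The substantive gap is Stage 4. The Bedford--Taylor continuity theorem yields $(dd^cv_p)^k\to(dd^c\varphi)^k$ only on $X\setminus\Sigma$, where you actually have locally uniform convergence to a continuous limit; it gives no control of mass on $\Sigma$, and an upper bound on $v_p$ alone does not prevent mass from concentrating there (the limit $\varphi$ is unbounded on $\Sigma$, and no convergence in capacity is available across it). The paper isolates precisely this point as Theorem~\ref{T:locMA}, whose proof is not a citation: one first bounds the masses $\|(dd^cv_p)^k\|$ locally near $\Sigma$ via the Forn\ae ss--Sibony Oka inequality applied to the negative current $v_p(dd^cv_p)^{k-1}$ on suitable Hartogs figures, and then identifies any weak limit $T$ with $(dd^c\varphi)^k$ by writing both in Siu's decomposition along the $(n{-}k)$-dimensional components of $\Sigma$ and matching the Lelong coefficients through a slicing argument that reduces to the top-degree case on transversal $k$-planes. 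You also omit the reason $\gamma_p^k$ is well defined for large $p$: the potentials $v_p$ are continuous only off the base locus $A_p=\bigcap_j\{S_j^p=0\}$, not off $\Sigma$, and one must show $\dim(A_p\cap U)\leq n-k$ on a fixed neighborhood $U\supset\Sigma$. The paper's Lemma~\ref{L:gammapk} does this by contradiction, using \eqref{e:mainhyp} to squeeze $A_p\cap U$ into arbitrarily small neighborhoods of $\Sigma$ and then invoking the support theorem on a weak limit of normalized integration currents $[Y_j]/|Y_j|$.
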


\par This theorem is proved in Section \ref{S:mtpf}. The proof relies on a local property of the complex Monge-Amp\`ere operator which is of independent interest (see Theorem \ref{T:locMA}). Some background material about singular Hermitian metrics and pluripotential theory needed in the paper is recalled in Section \ref{S:prelim}.

\smallskip

\par We examine in Section \ref{S:ex} a series of important situations where condition \eqref{e:mainhyp} of Theorem \ref{T:mt} holds, as it is an immediate consequence of deep results regarding the asymptotic expansion of the Bergman kernel function $P_p(x)\sim b_0(x)p^n+b_1(x)p^{n-1}+\dots$. 
Especially, Theorem \ref{T:mt} yields equidistribution results for singular metrics on big line bundles (Sections \ref{SS:heps}, \ref{SS:sm}), on Zariski-open sets of bounded negative Ricci curvature (Section \ref{SS:qp}), on toroidal compactifications of arithmetic quotients (Section \ref{SS:aq}), and finally on $1$-convex manifolds (Sections \ref{SS:1conv}, \ref{SS:spscd}).

\par The point of view adopted in Theorem \ref{T:mt} is that once some information is known on the asymptotic behavior of $P_p$ on the set where the metric is continuous, then the global weak convergence on $X$ of the currents $\gamma_p/p$ and their powers follows. Hypothesis \eqref{e:mainhyp} is obviously a much weaker condition than the asymptotic expansion of $P_p$ mentioned above. Indeed, in Section \ref{S:Bkf} we give a simple proof that \eqref{e:mainhyp} holds in the case of line bundles over compact K\"ahler manifolds endowed with metrics that are assumed to be only continuous outside of $\Sigma$ (see Theorems \ref{T:Bkf} and \ref{T:mt2}). In this case the asymptotic expansion of $P_p$ is not known. 

\par We also prove in Theorem \ref{T:BkfK} that Tian's theorem \cite{Ti90} holds for any singular metric with strictly positive curvature. Namely, let $(X,\Omega)$ be a compact K\"ahler manifold and $(L,h)$ be a holomorphic line bundle on $X$ with a singular metric $h$ so that $c_1(L,h)$ is a strictly positive current. If $\gamma_p$ are the Fubini-Study currents defined by \eqref{e:gammap} for the spaces of $L^2$-holomorphic sections of $L^p$ relative to the metric induced by $h$ and the volume form  $\Omega^n$, then $\frac{1}{p}\,\gamma_p\to c_1(L,h)$ in the weak sense of currents on $X$. The proofs of Theorems \ref{T:BkfK} and \ref{T:Bkf} rely on techniques developed by Demailly \cite{D92,D93b}.

\medskip

\par In a series of papers including \cite{ShZ99,ShZ04,ShZ08}, Shiffman and Zelditch describe the asymptotic distribution of zeros of random holomorphic sections of a positive line bundle over a projective manifold endowed with a smooth positively curved metric. They also study the distribution of zeros of quantum ergodic eigenfunctions. To prove these results they develop interesting new techniques, based in part on methods in complex dynamics from \cite{FS95b}.

\par Later, using different methods, Dinh and Sibony \cite{DS06b} obtain sharper estimates for the speed of convergence in the asymptotic distribution of zeros of random holomorphic sections. In \cite{DMS} these results are generalized to the case of complete Hermitian manifolds. The problem of the distribution of zeros of random sections of line bundles appears in other contexts as well. For example, the case of canonical line bundles over towers of covers is studied in \cite{To01}.

\par We show here how some of the important results of Shiffman and Zelditch can be obtained in our setting from Theorem \ref{T:mt}, assuming in addition that $X$ is compact. More precisely, following the framework in \cite{ShZ99}, we let $\lambda_p$ be the normalized surface measure on the unit sphere ${\mathcal S}^p$ of $H^0_{(2)}(X\setminus\Sigma,L^p)$, defined in the natural way by using a fixed orthonormal basis (see Section \ref{S:SZ}). We denote by $\lambda_p^k$ the product measure on $({\mathcal S}^p)^k$, and by $[S=0]$ the current of integration (with multiplicities) over the analytic hypersurface $\{S=0\}$ determined by a nontrivial section $S\in H^0(X,L^p)$. We prove in Section \ref{S:SZ} the following generalization of some results of Shiffman and Zelditch \cite{ShZ99,ShZ08} to our situation:

\begin{Theorem}\label{T:SZ} In the setting of Theorem \ref {T:mt}, assume that $X$ is compact, $\dim\Sigma\leq n-k$ for some $1\leq k\leq n$, and that \eqref{e:mainhyp} holds. Then, for all $p$ sufficiently large:

\par (i) $[\sigma=0]:=[\sigma_1=0]\wedge\ldots\wedge[\sigma_k=0]$ is a well defined positive closed current of bidegree (k,k) on $X$, for $\lambda_p^k$-a.e. $\sigma=(\sigma_1,\dots,\sigma_k)\in({\mathcal S}^p)^k$. 

\par (ii) The expectation $E_p^k[\sigma=0]$ of the current-valued random variable $\sigma\to[\sigma=0]$, given by
$$\langle E_p^k[\sigma=0],\varphi\rangle=\int_{({\mathcal S}^p)^k}\langle[\sigma=0],\varphi\rangle\,d\lambda_p^k,$$
where $\varphi$ is a test form on $X$, is a well defined current and $E_p^k[\sigma=0]=\gamma_p^k$. 

\par (iii) We have $\frac{1}{p^k}\,E_p^k[\sigma=0]\to\gamma^k$ as $p\to\infty$, weakly in the sense of currents on $X$. 
\end{Theorem}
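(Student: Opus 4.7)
The plan is to express $[\sigma_j=0]$ via Poincar\'e--Lelong in terms of the Bergman kernel, so that the computation reduces to the rotation invariance of the sphere measure. Writing $\sigma_j=s_je^{\otimes p}$ and $h=e^{-\varphi}$ locally, the identities $\gamma_p - p\gamma = \tfrac{1}{2}dd^c\log P_p$ and $[\sigma_j=0] = p\gamma + \tfrac{1}{2}dd^c\log|\sigma_j|_{h_p}^2$ yield
\[
[\sigma_j=0] = \gamma_p + \tfrac{1}{2}dd^c\log\bigl(|\sigma_j|_{h_p}^2/P_p\bigr),
\]
in which only the second summand depends on $\sigma_j$, and its $\lambda_p$-average will be constant in $x$. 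Granting (i) and (ii), part (iii) is immediate from the convergence $p^{-k}\gamma_p^k\to\gamma^k$ supplied by Theorem \ref{T:mt}.

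For (i), hypothesis \eqref{e:mainhyp} forces $P_p>0$ on $X\setminus\Sigma$ for large $p$, so the base locus of $H^0_{(2)}(X\setminus\Sigma,L^p)\subset H^0(X,L^p)$ lies in $\Sigma$ and every nontrivial $\sigma_j\in\mathcal{S}^p$ has a proper zero hypersurface in $X$. An inductive Bertini-type argument then shows that at the $(j+1)$-st step any irreducible component $V$ of $\{\sigma_1=0\}\cap\cdots\cap\{\sigma_j=0\}$ meets $X\setminus\Sigma$ (else $V\subset\Sigma$, which the dimension hypothesis $\dim\Sigma\le n-k$ will rule out for $j<k$), so the set of $\sigma_{j+1}\in\mathcal{S}^p$ with $V\subset\{\sigma_{j+1}=0\}$ is a proper linear subspace of $\mathcal{S}^p$, hence has $\lambda_p$-measure zero. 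Summing over the finitely many such components yields that for $\lambda_p^k$-a.e.\ $\sigma$ the divisors intersect in codimension exactly $k$, so that $[\sigma=0]$ is well defined by the intersection theory for positive closed currents with small singular support already used for $\gamma_p^k$ in Theorem \ref{T:mt}.

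For (ii), unitary invariance of $\lambda_p$ gives
\[
\int_{\mathcal{S}^p}\log\bigl(|\sigma(x)|_{h_p}^2/P_p(x)\bigr)\,d\lambda_p(\sigma)=c_p,\qquad x\in X\setminus\Sigma,
\]
a constant independent of $x$, which after applying $\tfrac{1}{2}dd^c$ produces $E_p[\sigma=0]=\gamma_p$. For higher $k$ the plan is to iterate via Fubini: for $\lambda_p^{k-1}$-a.e.\ $(\sigma_2,\dots,\sigma_k)$ the current $T=[\sigma_2=0]\wedge\cdots\wedge[\sigma_k=0]$ is well defined by (i), and
\[
\int_{\mathcal{S}^p}[\sigma_1=0]\wedge T\,d\lambda_p(\sigma_1)=\gamma_p\wedge T,
\]
whence induction yields $E_p^k[\sigma=0]=\gamma_p^k$.

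The main obstacle will be the rigorous justification of this interchange: one must commute the $\lambda_p$-integral with the $dd^c$-operator on the $\sigma_1$-dependent potential $\log(|\sigma_1|_{h_p}^2/P_p)$ and with the wedge against $T$. The first is a distributional Fubini, valid by local $L^1$-integrability of $\log|\sigma_1|_{h_p}^2$ jointly in $(\sigma_1,x)$, which follows from continuity and positivity of $P_p$ on $X\setminus\Sigma$. The second rests on the fact that the potential $\tfrac{1}{2}\log(|\sigma_1|_{h_p}^2/P_p)$ is, for almost every $\sigma_1$, integrable against the trace measure of $T$, so that wedge and $\lambda_p$-integration commute in the sense of pluripotential theory. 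This is essentially the Shiffman--Zelditch mechanism \cite{ShZ99,ShZ08} carried over to the present singular setting.
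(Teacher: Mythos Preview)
Your proposal is correct and follows essentially the same route as the paper: the paper packages your Bertini-type argument for (i) into a separate Proposition (proved via Sard's theorem and a measurability check that the ``good'' set is open), and your iterated expectation computation for (ii) into another Proposition whose proof is exactly the Poincar\'e--Lelong/unitary-invariance mechanism you describe, with the commutation of $\int_{\mathcal S^p}$ and $dd^c(\cdot\,T)$ justified by writing $T\wedge dd^c\varphi=\mu_1-\mu_2$ as a difference of positive measures and applying Tonelli. Your identification of the two analytic obstacles and how to resolve them matches the paper's treatment.
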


\par In particular, this theorem together with \cite[Lemma 3.3]{ShZ99} yields an equidistribution result for the zeros of a random sequence of sections $\{\sigma_p\}_{p\geq1}\in\prod_{p=1}^\infty{\mathcal S}^p$, i.e. $\frac{1}{p}\,[\sigma_p=0]\to\gamma$ as $p\to\infty$, in the weak sense of currents on $X$ (see Theorem \ref{T:rs}).

\medskip

\par {\em Acknowledgement.} Dan Coman is grateful to the Alexander von Humboldt Foundation for their support and to the Mathematics Institute at the University of Cologne for their hospitality.

\section{Preliminaries}\label{S:prelim}

\par We recall here a few of the notions that we will need. We start with the notion of singular Hermitian metric in Section \ref{SS:singm} and some necessary notions about desingularization in Section \ref{SS:desing}. In Section \ref{SS:metrics} we introduce the generalized Poincar\'e metric on a manifold and an associated metric on a line bundle with strictly positive curvature current. In Section \ref{SS:ppt} we recall a few facts regarding the definition of complex Monge-Amp\`ere operators.

\subsection{Singular Hermitian metrics on line bundles}\label{SS:singm}

\par Let $L\longrightarrow X$ be a holomorphic line bundle over a complex manifold $X$ and fix an open cover $X=\bigcup U_\alpha$ for which there exist local holomorphic frames $e_\alpha:U_\alpha\longrightarrow L$. The transition functions $g_{\alpha\beta}=e_\beta/e_\alpha\in{\mathcal O}^\star_X(U_\alpha\cap U_\beta)$ determine the \v{C}ech 1-cocycle $\{(U_\alpha,g_{\alpha\beta})\}$. 

\par Let $h$ be a smooth Hermitian metric on $L$. If $|e_\alpha(x)|_h^2=h(e_\alpha(x),e_\alpha(x))$ for $x\in U_\alpha$, we recall that the curvature form $c_1(L,h)$ of $h$ is defined by
$$c_1(L,h)\mid_{_{U_\alpha}}=-dd^c\log|e|_h=\frac{i}{2\pi}\,R^L,$$ 
where $R^L$ is the curvature of the holomorphic Hermitian connection $\nabla^L$ on $(L,h)$.

\par If $h$ is a singular Hermitian metric on $L$ then (see \cite{D90}, also \cite[p.\ 97]{MM07}) $h(e_\alpha,e_\alpha)=e^{-2\varphi_\alpha}$, where the functions $\varphi_\alpha\in L^1_{loc}(U_\alpha)$ are called the local weights of the metric $h$. One has $\varphi_\alpha=\varphi_\beta+\log|g_{\alpha\beta}|$ on $U_\alpha\cap U_\beta$, and the curvature of $h$, 
$$c_1(L,h)\mid_{_{U_\alpha}}=dd^c\varphi_\alpha,$$
is a well defined closed (1,1) current on $X$. We say that the metric $h$ is (semi)positively curved if $c_1(L,h)$ is a positive current. Equivalently, the weights $\varphi_\alpha$ can be chosen to be plurisubharmonic (psh) functions. 

\smallskip 

\par Let $L'\longrightarrow X$ be a holomorphic line bundle isomorphic to $L$. A metric $h^L$ on $L$ induces a metric $h^{L'}$ on $L'$ with curvature current $c_1(L,h^L)=c_1(L',h^{L'})$.

\par Suppose now that $M$ is a complex manifold and $f:M\longrightarrow X$ is a locally biholomorphic map. A metric $h^L$ on $L$ induces a metric $f^\star h^L$ on $f^\star L$ whose curvature current is   
$c_1(f^\star L,f^\star h^L)=f^\star\left(c_1(L,h^L)\right)$.

\subsection{Desingularization}\label{SS:desing}

We recall here Hironaka's embedded resolution of singularities theorem (see e.g. \cite{BM97}, \cite[Theorem 2.1.13]{MM07}). Let $X$ be a complex manifold and $\Sigma\subset X$ be a compact analytic subvariety of $X$. Then there exists a finite sequence of blow up maps $\sigma_{j+1}:X_{j+1}\longrightarrow X_j$ with smooth centers $Y_j$,  
$$\begin{array}{ll}X_m\stackrel{\sigma_m}{\longrightarrow}\\\Sigma_m\\E_m\end{array}
\begin{array}{ll}X_{m-1}\longrightarrow\ldots\\\Sigma_{m-1}\\E_{m-1}\end{array}
\begin{array}{ll}\longrightarrow\\ \\ \\ \end{array}
\begin{array}{ll}X_{j+1}\stackrel{\sigma_{j+1}}{\longrightarrow}\\\Sigma_{j+1}\\E_{j+1}\end{array}
\begin{array}{ll}X_j\longrightarrow\ldots\\\Sigma_j\\E_j\end{array}
\begin{array}{ll}\longrightarrow\\ \\ \\ \end{array}
\begin{array}{ll}X_1\stackrel{\sigma_1}{\longrightarrow}\\\Sigma_1\\E_1\end{array}
\begin{array}{ll}X_0=X\\\Sigma_0=\Sigma\\E_0=\emptyset\end{array},$$
such that: 

$(i)$ \ \ \ $Y_j$ is a compact submanifold of $X_j$ with $\dim Y_j\leq\dim X-2$ and $Y_j\subset\Sigma_j$,

$(ii)$ \ \ $\Sigma_{j+1}=\Sigma_j'$ is the strict transform of $\Sigma_j$ by $\sigma_{j+1}$,

$(iii)$ \ $E_{j+1}=E_j'\cup\sigma_{j+1}^{-1}(Y_j)$ is the set of exceptional hypersurfaces in $X_{j+1}$,

$(iv)$ \ \ $\Sigma_m$ is a smooth hypersurface and $\Sigma_m\cup E_m$ is a divisor with normal crossings.

\medskip

\par Let $\tau_j=\sigma_1\circ\ldots\circ\sigma_j:X_j\longrightarrow X$. Since $\sigma_{j+1}:X_{j+1}\setminus E_{j+1}\longrightarrow X_j\setminus(E_j\cup Y_j)$ is a biholomorphism, it follows that 
$$\tau_m:X_m\setminus E_m\longrightarrow\tau_m(X_m\setminus E_m)=X\setminus Y$$
is a biholomorphism, where
$$Y=Y_0\cup\tau_1(Y_1)\cup\tau_2(Y_2)\cup\ldots\cup\tau_{m-1}(Y_{m-1}).$$
As $Y_j\subset\Sigma_j$ and $\sigma_j(\Sigma_j)\subset\Sigma_{j-1}$, we have $\tau_j(Y_j)\subset\Sigma$ for every $j=1,\dots,m-1$. Since $Y_j$ is compact $\tau_j:Y_j\longrightarrow X$ is proper, so $\tau_j(Y_j)$ is an analytic subvariety of $X$ of dimension $\leq\dim Y_j$. Hence $Y$ is an analytic subvariety of $X$, $Y\subset\Sigma$ and $\dim Y\leq\dim X-2$. 

\medskip  

\par In conclusion, setting $\widetilde X=X_m$, $E=E_m$, and $\pi=\tau_m:\widetilde X\longrightarrow X$, we have: 

\begin{Theorem}[Hironaka]\label{T:Hironaka}Let $X$ be a complex manifold and $\Sigma\subset X$ be a compact analytic subvariety of $X$. Then there exist a complex manifold $\widetilde X$, an analytic subvariety $Y\subset\Sigma$ with $\dim Y\leq\dim X-2$, and a surjective holomorphic map $\pi:\widetilde X\longrightarrow X$ with the following properties:

(i) $\pi:\widetilde X\setminus E\longrightarrow X\setminus Y$ is a biholomorphism, where $E=\pi^{-1}(Y)$.

(ii) the strict transform $\Sigma'=\overline{\pi^{-1}(\Sigma\setminus Y)}$ is smooth and $\pi^{-1}(\Sigma)=\Sigma'\cup E$ is a divisor with normal crossings. 
\end{Theorem}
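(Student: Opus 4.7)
\textbf{Proof proposal for Theorem \ref{T:Hironaka}.} The plan is simply to consolidate the iterated blow-up construction described just above the statement and to verify each of the claimed properties. Concretely, I would set $\widetilde X = X_m$, $E = E_m$, $\pi = \tau_m$, and $Y = Y_0 \cup \tau_1(Y_1) \cup \cdots \cup \tau_{m-1}(Y_{m-1})$. The existence of the tower $(\sigma_j, Y_j, \Sigma_j, E_j)$ satisfying conditions (i)--(iv) is the actual content of Hironaka's embedded resolution theorem, which I would invoke from \cite{BM97}; that is the one genuinely hard input, and the rest of the argument is bookkeeping.

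First I would verify that $Y$ is an analytic subvariety of $X$ with $Y \subset \Sigma$ and $\dim Y \leq \dim X - 2$. Each $Y_j$ is a compact submanifold of $X_j$ of dimension at most $\dim X - 2$, so $\tau_j|_{Y_j}$ is proper and its image $\tau_j(Y_j)$ is an analytic subvariety of $X$ obeying the same dimension bound. The inclusion $\tau_j(Y_j) \subset \Sigma$ follows by induction from $Y_j \subset \Sigma_j$ together with $\sigma_i(\Sigma_i) \subset \Sigma_{i-1}$ for all $i \leq j$, so $Y \subset \Sigma$. For property (i) I would use that each blow-up restricts to a biholomorphism $\sigma_{j+1} \colon X_{j+1} \setminus E_{j+1} \to X_j \setminus (E_j \cup Y_j)$; composing these yields a biholomorphism $\widetilde X \setminus E \to X \setminus Y$, and $\pi^{-1}(Y) = E$ by construction.

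For property (ii), the normal-crossings assertion is essentially property (iv) of the tower: $\Sigma_m \cup E_m$ is a divisor with normal crossings. It remains to identify $\Sigma_m$ with the strict transform $\Sigma' = \overline{\pi^{-1}(\Sigma \setminus Y)}$. Over $X \setminus Y$ the map $\pi$ is a biholomorphism and $\pi^{-1}(\Sigma \setminus Y) = \pi^{-1}(\Sigma) \setminus E$; the closure of this set inside $\widetilde X$ coincides with $\Sigma_m$ by the inductive definition of successive strict transforms $\Sigma_{j+1} = \Sigma_j'$. Therefore $\pi^{-1}(\Sigma) = \Sigma' \cup E$ is a divisor with normal crossings. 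The main obstacle is thus entirely absorbed into Hironaka's theorem; granted that, the reduction to the statement as formulated here is routine.
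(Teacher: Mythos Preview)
Your proposal is correct and mirrors the paper's treatment exactly: the paper derives the theorem as an immediate consolidation of the blow-up tower discussed just before the statement, setting $\widetilde X=X_m$, $E=E_m$, $\pi=\tau_m$, and $Y=Y_0\cup\tau_1(Y_1)\cup\cdots\cup\tau_{m-1}(Y_{m-1})$, with the hard analytic input imported from \cite{BM97}. The bookkeeping you outline (properness of $\tau_j|_{Y_j}$, the dimension bound, $Y\subset\Sigma$, the biholomorphism off $E$, and the identification of $\Sigma_m$ with the strict transform) is precisely what the paper records in the paragraph preceding the theorem.
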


\subsection{Special metrics}\label{SS:metrics}

\par Let $X$ be a complex manifold of dimension $n$. Assume that $L\longrightarrow X$ is a holomorphic line bundle with a singular Hermitian metric $h$ which is continuous outside a proper compact analytic subvariety $\Sigma\subset X$, and whose curvature $\gamma=c_1(L,h)$ is a {\em strictly positive} closed (1,1) current on $X$ (i.e. it dominates a smooth positive (1,1) form on $X$). We write 
$$\Sigma=Z_1\cup Z_2,$$
where $Z_1,Z_2$ are analytic subvarieties of $X$, $Z_1$ has pure dimension $n-1$, and $\dim Z_2\leq n-2$. Let $\pi:\widetilde X\longrightarrow X$ be a resolution of singularities of $\Sigma$ as in Theorem \ref{T:Hironaka}. Then $\pi:\widetilde X\setminus E\longrightarrow X\setminus Y$ is a biholomorphism, where $Y\subset\Sigma$ is an analytic subvariety with $\dim Y\leq n-2$, $E=\pi^{-1}(Y)$, $Z_2\subset Y$, $\Sigma'=Z_1'$ is smooth, and $\pi^{-1}(\Sigma)=Z_1'\cup E$ is a divisor with normal crossings.

\subsubsection{The metric $\Theta$}\label{SSS:Theta}

\par We recall here the construction and properties of the generalized Poincar\'e metric on $X\setminus\Sigma$ (cf. \cite[Lemma 6.2.1]{MM07}). Let $\widetilde\Omega$ be a smooth positive (1,1) form on $\widetilde X$. When $X$ is not compact we take $\widetilde\Omega$ so that the associated metric is complete on $\widetilde X$.

\par Let $\Sigma_1,\dots,\Sigma_N$ be the irreducible components of $\pi^{-1}(\Sigma)$, so $\Sigma_j$ is a smooth hypersurface in $\widetilde X$. Let $\sigma_j$ be holomorphic sections of the associated holomorphic line bundle $\cO_{\widetilde X}(\Sigma_j)$ vanishing to first order on $\Sigma_j$ and let $|\cdot|_j$ be a smooth Hermitian metric on $\cO_{\widetilde X}(\Sigma_j)$ so that $|\sigma_j|_j<1$. We define  
\begin{equation}\label{e:F}
\widetilde\Theta_{\delta}=\widetilde\Omega+\delta dd^cF,\text{ where }\delta>0,\;F=-\frac{1}{2}\sum_{j=1}^N\log(-\log|\sigma_j|_j).
\end{equation}

\par If $\delta$ is small enough, $\widetilde\Theta_{\delta}$ defines a complete Hermitian metric on $\widetilde X\setminus\pi^{-1}(\Sigma)$ and we have $\widetilde\Theta_{\delta}\geq\widetilde\Omega/2$ in the sense of currents on $\widetilde X$. Moreover, if $X$ is compact then so is $\widetilde X$  and we have that $\widetilde\Theta_{\delta}$ has finite volume (see \cite[Lemma 6.2.1]{MM07}). Fixing such a $\delta$, we define the Poincar\'e metric on $X\setminus\Sigma$ as the metric associated to the $(1,1)$ form 
$$\Theta=(\pi^{-1})^\star\widetilde\Theta_\delta.$$
This metric has the same properties on $X\setminus\Sigma$ as $\widetilde\Theta_\delta$ does on $\widetilde X\setminus\pi^{-1}(\Sigma)$. 

\par Let now $x\in\Sigma^{n-1}_{reg}$ and local coordinates $z_1,\dots,z_n$ be chosen so that $x=0$, $\Sigma=\{z_1=0\}$. Then $\Theta^n\sim(|z_1|\log|z_1|)^{-2}\,d\lambda$ near $x$, where $\lambda$ is the Lebesgue measure in coordinates (see  \cite[(6.2.11)]{MM07}). In particular, we have that $\Theta^n=f\Omega^n$, where the function $f$ verifies assumption (C) stated in the introduction.

\subsubsection{The metric $h_\varepsilon$}\label{SSS:heps}

\par It is necessary to perturb the original metric $h$ of $L$ in order to obtain a metric on $L\mid_{_{X\setminus\Sigma}}$ whose curvature current dominates a small multiple of $\Theta$. By \cite[Lemma 6.2.2]{MM07} there exists a holomorphic line bundle $\widetilde L\longrightarrow\widetilde X$ which has a  singular Hermitian metric $h^{\widetilde L}$, continuous on $\widetilde X\setminus\pi^{-1}(\Sigma)$, and such that $\widetilde L\mid_{_{\widetilde X\setminus E}}$ is isomorphic to $\pi^\star\left(L^k\mid_{_{X\setminus Y}}\right)$, for some $k\in{\mathbb N}$. Moreover, $c_1(\widetilde L,h^{\widetilde L})=k\,\pi^\star\gamma+\theta$ is a strictly positive current on $\widetilde X$, where $\theta$ is a smooth real closed $(1,1)$ form supported in a neighborhood of $E$ and strictly positive along $E$. 

\par Since $\widetilde L\mid_{_{\widetilde X\setminus E}}\cong\pi^\star\left(L^k\mid_{_{X\setminus Y}}\right)$ the metric $h^{\widetilde L}$ induces a singular Hermitian metric $h^{L'}$ on $L'=\pi^\star\left(L\mid_{_{X\setminus Y}}\right)$ with curvature current $\gamma'=\pi^\star\gamma+\theta'$, where $\theta'=\theta/k$. For $\varepsilon>0$,
$$h_\varepsilon^{L'}=h^{L'}\prod_{j=1}^N(-\log|\sigma_j|_j)^\varepsilon$$
is a singular Hermitian metric on $L'$ with curvature current 
$$\gamma'_\varepsilon=\gamma'+\varepsilon dd^cF=\pi^\star\gamma+\theta'+\varepsilon dd^cF,$$
where $F$ is given in \eqref{e:F}. Since $\gamma'$ is a strictly positive current it follows that $\gamma'_\varepsilon$ is a strictly positive current on $\widetilde X$, provided that $\varepsilon$ is sufficiently small (cf. \cite[Lemma 6.2.1]{MM07}). We fix such an $\varepsilon$ and note that, as $\pi:\widetilde X\setminus E\longrightarrow X\setminus Y$ is a biholomorphism, the metric $h_\varepsilon^{L'}$ on $L'$ induces a singular metric $h_\varepsilon$ on $L\mid_{_{X\setminus Y}}$ which is continuous on $X\setminus\Sigma$. When $X$ is compact the curvature current of $h_\varepsilon$ dominates a small multiple of $\Theta$ on $X\setminus\Sigma$.

\subsection{Wedge products of singular currents}\label{SS:ppt}

\par We recall here a few facts that we need regarding the definition of complex Monge-Amp\`ere operators. Let $T$ be a positive closed current of bidimension $(l,l)$, $l>0$, on an open set $U$ in ${\mathbb C}^n$. The coefficients of $T$ are complex Radon measures and their total variations are dominated, up to multiplicative constants, by the trace measure of $T$, $|T|=T\wedge\Omega^l$, where $\Omega$ is any fixed smooth positive $(1,1)$ form on $U$. If $u$ is a psh function on $U$ so that $u\in L^1_{loc}(U,|T|)$ we say that the wedge product $dd^cu\wedge T$ is well defined. This is the positive closed current of bidimension $(l-1,l-1)$ defined by $dd^cu\wedge T=dd^c(uT)$. 

\par If $u_1,\dots,u_q$ are psh functions on $U$ we say that $dd^cu_1\wedge\ldots\wedge dd^cu_q$ is well defined if one can define inductively as above all intermediate currents 
$$dd^cu_{j_1}\wedge\ldots\wedge dd^cu_{j_l}=dd^c(u_{j_1}dd^cu_{j_2}\wedge\ldots\wedge dd^cu_{j_l}),\;1\leq j_1<\ldots<j_l\leq q.$$ 
The wedge product is well defined for locally bounded psh functions \cite{BT76,BT82}, for psh functions that are locally bounded outside a compact subset of a pseudoconvex open set $U$, or when the mutual intersection of their unbounded loci is small in a certain sense \cite{Si85,D93, FS95}. We recall here one such situation \cite[Corollary 2.11]{D93}: if $u_1,\dots,u_q$ are psh functions on $U$ so that $u_j$ is locally bounded outside an analytic subset $A_j$ of $U$ and ${\rm codim}\,(A_{j_1}\cap\ldots\cap A_{j_l})\geq l$ for each $l$, $1\leq j_1<\ldots<j_l\leq q$, then $dd^cu_1\wedge\ldots\wedge dd^cu_q$ is well defined. 
We also note that the natural domain of definition of the Monge-Amp\`ere operator $u\to(dd^cu)^n$ is completely described in \cite{Bl04,Bl06,Ce04}. 

\par If $T$ is a positive closed current of bidegree $(1,1)$ on a complex manifold $X$ then locally $T=dd^cu$ for a psh function $u$. Hence defining $T_1\wedge\ldots\wedge T_q$ for such currents $T_j$ amounts to verifying locally one of the conditions mentioned above for their psh potentials $u_j$. We conclude this brief overview by noting that when $X$ is compact the class of currents for which the wedge product can be globally defined so that it has good continuity properties is larger than the one for which it is well defined by local considerations as above (see e.g. \cite{GZ05,GZ07,CGZ08}).

\section{Proof of Theorem \ref{T:mt}}\label{S:mtpf}

\par In this section we give the proof of Theorem \ref{T:mt}. We start with a rather elementary property of the Bergman kernel function $P_p$ in Lemma \ref{L:Bergfcn} and show in Lemma \ref{L:gammap} that $\log P_p$ is, locally on $X$, the difference of two psh functions. Moreover, the Fubini-Study currents $\gamma_p$ are well defined, and if the codimension of $\Sigma$ is bigger than $k\geq2$, then also the wedge products $\gamma_p^k$ are well defined (Lemma \ref{L:gammapk}). We continue with the crucial Theorem \ref{T:locMA} about the local continuity properties of the Monge-Amp\`ere operator. This result is of independent interest. With these preparations we can then prove Theorem \ref{T:mt}.
 
\par For the convenience of the reader, we include a proof of the following properties of the function $P_p$ in our setting.

\begin{Lemma}\label{L:Bergfcn} If $P_p$ is the Bergman kernel function defined in \eqref{e:Bergfcn} then the definition is independent of the basis $\{S^p_j\}_{j\geq1}$ and the function $P_p$ is continuous on $X\setminus\Sigma$.
\end{Lemma}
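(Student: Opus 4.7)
The plan is to deduce both assertions---basis-independence and continuity---from the single observation that for each $x\in X\setminus\Sigma$ the pointwise evaluation is a bounded linear functional on the Hilbert space $H^0_{(2)}(X\setminus\Sigma,L^p)$. I would fix $x_0\in X\setminus\Sigma$, a relatively compact coordinate ball $W\Subset X\setminus\Sigma$ containing $x_0$, and a local holomorphic frame $e$ on $W$; write $S=se^{\otimes p}$ and $|e|^2_h=e^{-2\varphi}$, where $\varphi$ is continuous on $\overline W$ since $h$ is continuous on $X\setminus\Sigma$. Combining the sub-mean-value inequality for the holomorphic function $s$ with the uniform bounds $0<m_W\le e^{-2p\varphi}\le M_W$ on $\overline W$, the lower bound $f\ge c_W>0$ from assumption~(C), and the local comparability of the Euclidean volume with $\Omega^n$, I would obtain, uniformly for $y$ in a compact set $K\subset W$ and for a sufficiently small ball $B(y,r)\subset W$,
\begin{equation*}
|s(y)|^2\le C_K\int_{B(y,r)}|S|^2_{h_p}\,f\,\Omega^n\le C_K\|S\|_p^2.
\end{equation*}

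Applying this estimate to $S=S_j^p$, the Riesz representation theorem produces a reproducing element $K^p_y\in H^0_{(2)}(X\setminus\Sigma,L^p)$ with $s(y)=(S,K^p_y)_p$ for every $S$; Parseval's identity in the basis $\{S^p_j\}$ then yields $\sum_j|s_j^p(y)|^2=\|K^p_y\|_p^2$, so the series defining $P_p(y)=e^{-2p\varphi(y)}\|K^p_y\|_p^2$ converges, and its sum depends only on the Hilbert-space structure, not on the basis. Equivalently, this gives the extremal characterization $P_p(y)=\sup\{|S(y)|^2_{h_p}:\|S\|_p\le 1\}$, and in particular shows $P_p$ is locally bounded on $X\setminus\Sigma$.

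For continuity I would show that the continuous partial sums $P_{p,N}:=\sum_{j=1}^N|S_j^p|^2_{h_p}$ converge to $P_p$ locally uniformly on $X\setminus\Sigma$. Applying the sub-mean-value estimate above to the tail $\sum_{j>N}|s_j^p|^2$ and exchanging sum and integral by monotone convergence, I would obtain, for $y\in K$ and a single enlarged ball $B'\subset W$ containing every $B(y,r)$,
\begin{equation*}
0\le P_p(y)-P_{p,N}(y)\le C'_K\int_{B'}\bigl(P_p-P_{p,N}\bigr)f\,\Omega^n.
\end{equation*}
Since $P_p$ is locally bounded by the first step, $P_p f$ is integrable on $B'$, so dominated convergence drives the right-hand side to zero independently of $y\in K$; uniform convergence on $K$ of the continuous partial sums then yields continuity of $P_p$ on $X\setminus\Sigma$. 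The main technical point is setting up assumption~(C) carefully enough that the sub-mean-value inequality for $s$ can be converted into a genuine estimate in the $L^2$-norm of $H^0_{(2)}(X\setminus\Sigma,L^p)$; beyond that, everything reduces to routine Hilbert-space and dominated-convergence bookkeeping.
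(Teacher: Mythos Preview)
Your proposal is correct and follows essentially the same approach as the paper: both arguments convert the sub-mean-value inequality for holomorphic functions into a bounded-evaluation estimate using the continuity of $\varphi$ and the lower bound $f\ge c_W$ from assumption~(C), then derive basis-independence from the extremal characterization $P_p(y)=\sup\{|S(y)|^2_{h_p}:\|S\|_p\le1\}$ (the paper's section $S_z$ is exactly your reproducing element $K^p_y$ written in coordinates), and finally obtain continuity from locally uniform convergence of the series. The only cosmetic difference is that the paper proves the stronger normal convergence $\sum_j\max_{K_1}|s^p_j|^2<\infty$ directly, whereas you reach uniform convergence of the partial sums via dominated convergence on the tail integral; both routes work and rely on the same ingredients.
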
 

\begin{proof} By the Riesz-Fischer theorem we have that $S\in H^0_{(2)}(X\setminus\Sigma,L^p)$ if and only if there exists a sequence $a=\{a_j\}\in l^2$ so that $S=S_a$, where $S_a=\sum_{j=1}^\infty a_jS^p_j$ and  $\|S_a\|_p=\|a\|_2$. 

\smallskip

\par Fix $x\in X\setminus\Sigma$ and a neighborhood $U_\alpha\subset\subset X\setminus\Sigma$ of $x$ with a holomorphic frame $e_\alpha$ of $L$ over $U_\alpha$ and so that $f\geq c>0$ on $U_\alpha$. We write $S_a=s_ae_\alpha^{\otimes p}$, $S^p_j=s^p_je_\alpha^{\otimes p}$, and we let $\psi_\alpha$ be a continuous psh weight of $h$ on $U_\alpha$. It follows that $s_a=\sum_{j=1}^\infty a_js^p_j$ and the series converges locally uniformly on $U_\alpha$. As this holds for every sequence $a\in l^2$ we have that $\{s^p_j(z)\}\in l^2$ for all $z\in U_\alpha$. 

\par We fix compacts $K_i$ so that $x\in{\rm int}\,K_1$, $K_1\subset\subset K_2\subset\subset K_3\subset U_\alpha$. For $z\in K_2$ consider the sections $S_z=\sum_{j=1}^\infty\overline{s_j^p(z)}S^p_j\in H^0_{(2)}(X\setminus\Sigma,L^p)$, and write $S_z=s_ze_\alpha^{\otimes p}$. Then 
$$\left(\sum_{j=1}^\infty|s_j^p(z)|^2\right)^2=|s_z(z)|^2\leq C_1\int_{K_3}|s_z|^2e^{-2p\psi_\alpha}\,f\Omega^n\leq C_1\|S_z\|^2_p=C_1\sum_{j=1}^\infty|s_j^p(z)|^2,$$
for some constant $C_1$. This implies that 
$$\sum_{j=1}^\infty|s_j^p(z)|^2\leq C_1,\;\forall\,z\in K_2.$$
We have 
$$|s_j^p(y)|^2\leq C_2\int_{K_2}|s_j^p|^2\Omega^n,\;\forall\,y\in K_1,$$ 
where $C_2$ is a constant. Therefore 
$$\sum_{j=1}^\infty\max_{K_1}|s_j^p|^2\leq C_2\int_{K_2}\left(\sum_{j=1}^\infty|s_j^p|^2\right)\Omega^n\leq C_1C_2\int_{K_2}\Omega^n,$$
so the series $\sum_{j=1}^\infty|s_j^p|^2$ converges uniformly on $K_1$. This shows that the function $P_p=\sum_{j=1}^\infty|s_j^p|^2e^{-2p\psi_\alpha}$ is continuous near $x$. 

\smallskip

\par To see that $P_p$ does not depend on the choice of basis, observe that 
$$P_p(x)=\max\{|S(x)|^2_{h_p}:\,S\in H^0_{(2)}(X\setminus\Sigma,L^p),\;\|S\|_p=1\}.$$ 
Indeed, using the above notations we have for $a\in l^2$ with $\|a\|_2=1$, 
$$|S_a(x)|^2_{h_p}=\left|\sum_{j=1}^\infty a_js_j^p(x)\right|^2e^{-2p\psi_\alpha(x)}\leq\left(\sum_{j=1}^\infty|s^p_j(x)|^2\right)e^{-2p\psi_\alpha(x)}=P_p(x).$$
Moreover, if 
$$a=\left\{c^{-1}\,\overline{s_j^p(x)}\right\}_{j\geq1},\;c:=\left(\sum_{j=1}^\infty|s_j^p(x)|^2\right)^{1/2},$$
then $\|a\|_2=1$, $S_a(x)=ce_\alpha^{\otimes p}$, so $|S_a(x)|^2_{h_p}=P_p(x)$. 
\end{proof}

\par We start the proof of Theorem \ref{T:mt} with two lemmas. 

\begin{Lemma}\label{L:gammap} If $X,\,\Sigma,\,(L,h),\,f,\,\Omega$ are as in Theorem \ref{T:mt} then:

\par (i) $H^0_{(2)}(X\setminus\Sigma,L^p)\subset H^0(X,L^p)$. 

\par (ii) $\gamma_p$ extends to a positive closed current of bidegree (1,1) on $X$ defined locally by formula \eqref{e:gammap} and which is independent of the choice of basis $\{S_j^p\}$.

\par $(iii)$ $\log P_p\in L^1_{loc}(X,\Omega^n)$ and $dd^c\log P_p=2\gamma_p-2p\gamma$ as currents on $X$.
\end{Lemma}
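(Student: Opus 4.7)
\par I would prove the three parts in sequence. For (i), fix $S \in H^0_{(2)}(X\setminus\Sigma, L^p)$ and a chart $U$ with holomorphic frame $e$ and continuous psh weight $\psi$, and write $S = se^{\otimes p}$ with $s$ holomorphic on $U \setminus \Sigma$. Near a point $x_0 \in \Sigma^{n-1}_{\mathrm{reg}}$ the continuity of $h$ on $X\setminus\Sigma$ yields a positive lower bound on $e^{-2p\psi}$ in a neighborhood of $x_0$, and assumption (C) gives $f \geq c_{x_0} > 0$ there; the $L^2$ hypothesis on $S$ therefore reduces to $\int |s|^2 \, d\lambda < \infty$ locally, and the standard $L^2$-removability theorem across a smooth hypersurface extends $s$ holomorphically across $\Sigma^{n-1}_{\mathrm{reg}}$. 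The remaining locus $\Sigma \setminus \Sigma^{n-1}_{\mathrm{reg}}$ is analytic of codimension $\geq 2$, so Riemann's second extension theorem completes the holomorphic extension of $s$ to all of $U$; patching yields $S \in H^0(X, L^p)$.

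\par For (ii), since each $s_j^p$ is now holomorphic on $U$, every partial sum
\[
v_N := \log \sum_{j=1}^N |s_j^p|^2
\]
is plurisubharmonic on $U$, and the sequence is increasing in $N$. On $U \setminus \Sigma$, Lemma \ref{L:Bergfcn} gives $\sum_{j=1}^\infty |s_j^p|^2 = P_p\, e^{2p\psi}$, finite and continuous, so $\lim_N v_N$ is finite on the open dense set $U \setminus \Sigma$. By the standard theorem on increasing limits of plurisubharmonic functions, the upper semicontinuous regularization $v^* := (\lim_N v_N)^*$ is then plurisubharmonic on $U$, and $v^* = \lim_N v_N$ on $U\setminus\Sigma$ by continuity of the limit there. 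I define $\gamma_p|_U := \tfrac{1}{2}\, dd^c v^*$, a positive closed $(1,1)$-current on $U$ extending the original $\gamma_p$ on $U \setminus \Sigma$. The local pieces glue on overlaps because transition functions $g_{\alpha\beta}$ are nowhere zero, so $\log|g_{\alpha\beta}|$ is pluriharmonic and contributes nothing under $dd^c$; basis independence follows from Lemma \ref{L:Bergfcn}, since $\sum_j |s_j^p|^2$ depends only on the space $H^0_{(2)}(X\setminus\Sigma,L^p)$ on $U\setminus\Sigma$, and the extension $v^*$ is uniquely determined by its values there.

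\par For (iii), defining $\log P_p := v^* - 2p\psi$ on $U$ (which agrees with the original continuous $\log P_p$ on $U \setminus \Sigma$) expresses $\log P_p$ as a difference of two plurisubharmonic functions on $U$, hence $\log P_p \in L^1_{\mathrm{loc}}(U, \Omega^n)$; this is a global assertion because the change of frame changes $\psi$ by a pluriharmonic term. Applying $dd^c$ on $U$ gives
\[
dd^c \log P_p = dd^c v^* - 2p\, dd^c \psi = 2\gamma_p - 2p\gamma,
\]
and these local identities glue to the asserted equation of currents on $X$.

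\par The main technical point is part (ii): one must verify that the regularized limit $v^*$ is genuinely plurisubharmonic rather than identically $+\infty$, and that it correctly extends $\gamma_p$ across $\Sigma$. The openness and density of $U\setminus\Sigma$, combined with the continuity of $\lim_N v_N$ there supplied by Lemma \ref{L:Bergfcn}, is precisely what rules out the pathological case and lets the classical regularization theorem produce a plurisubharmonic extension. Parts (i) and (iii) are then essentially bookkeeping: standard extension theorems for (i) and the $dd^c$-calculus for (iii).
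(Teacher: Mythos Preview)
Your overall strategy is the same as the paper's, and parts (i) and (iii) are essentially correct, but part (ii) has a genuine gap and part (i) has a misstatement in the justification.

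\textbf{Part (i).} The claim that ``continuity of $h$ on $X\setminus\Sigma$ yields a positive lower bound on $e^{-2p\psi}$ in a neighborhood of $x_0$'' is not right: $x_0\in\Sigma$, and continuity away from $\Sigma$ says nothing about behavior of $\psi$ as you approach $x_0$. The correct (and simpler) reason is that $\psi$ is plurisubharmonic on all of $U$ by assumption (B), hence locally bounded \emph{above}, which gives $e^{-2p\psi}\geq c>0$ near $x_0$. With this fix, your extension argument matches the paper's (Skoda's lemma across $\Sigma^{n-1}_{\mathrm{reg}}$, then the codimension-$2$ extension).

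\textbf{Part (ii).} The ``standard theorem on increasing limits of plurisubharmonic functions'' asserts that $(\lim_N v_N)^*$ is psh \emph{provided the sequence is locally uniformly bounded above}. You have only shown that the limit is finite and continuous on the dense open set $U\setminus\Sigma$; this does \emph{not} by itself guarantee local boundedness near points of $\Sigma$, and without that the regularization could be $+\infty$ on a neighborhood of $\Sigma$. (For instance, on a ball the increasing psh sequence $v_N(z)=\max\{N\log(|z-a|/\rho),0\}$ has limit $0$ on the open ball $B(a,\rho)$ but $+\infty$ outside; so finiteness on an open set is not enough, and you must actually use something about $\Sigma$.) The paper supplies exactly the missing step: near $x\in\Sigma$ one chooses coordinates so that $\Sigma$ lies in a cone $\{|z_n|\le\max(|z_1|,\dots,|z_{n-1}|)\}$ and applies the maximum principle on complex lines parallel to the $z_n$-axis, obtaining $\sup_{V}e^{u_p}\le\sup_K e^{u_p}$ for a neighborhood $V$ of $x$ and a compact $K\subset U\setminus\Sigma$. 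This yields the needed local upper bound and makes $u_p$ (your $v^*$) psh on all of $U$. Once you insert this short argument, your increasing-partial-sums approach and the paper's direct approach coincide.

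\textbf{Part (iii).} This is fine once (ii) is repaired; it is exactly the paper's observation that locally $\log P_p = u_p - 2p\psi_\alpha$ is a difference of psh functions.
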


\begin{proof} $(i)$ Let $x\in\Sigma^{n-1}_{reg}$ and let $e_\alpha$ be a holomorphic frame of $L$ on a neighborhood $U_\alpha$ of $x$. A section $S\in H^0_{(2)}(X\setminus\Sigma,L^p)$ can be written on $U_\alpha$ as $S=se_\alpha^{\otimes p}$ where $s$ is a holomorphic function on $U_\alpha\setminus\Sigma$. We may assume that $h$ has a psh weight $\psi_\alpha$ which is bounded above on $U_\alpha$ and that $f\geq c>0$ on $U_\alpha$ for some constant $c$. Then 
$$\int_{U_\alpha\setminus\Sigma}|s|^2\,\Omega^n\leq C\int_{U_\alpha\setminus\Sigma}|s|^2e^{-2p\psi_\alpha}\,f\Omega^n\leq C\|S\|^2_p<\infty.$$
By Skoda's lemma \cite[Lemma 2.3.22]{MM07}, this implies that $S$ extends holomorphically near $x$. 

\par Thus any section $S\in H^0_{(2)}(X\setminus\Sigma,L^p)$ extends holomorphically to a section of $L^p$ over $X\setminus Y$, where $Y=\Sigma\setminus\Sigma^{n-1}_{reg}$, and hence to a holomorphic section of $L^p$ since $Y$ is an analytic subvariety of $X$ of codimension $\geq2$. 

\smallskip

\par$(ii)$ Let $x\in\Sigma$, $U_\alpha$ be a neighborhood of $x$ as above, and set 
$$u_p:=\log\left(\sum_{j=1}^\infty|s_j^p|^2\right)\;{\rm on}\;U_\alpha.$$ 
By Lemma \ref{L:Bergfcn} it follows that the function $e^{u_p}=P_pe^{2p\psi_\alpha}$ is continuous on $U_\alpha\setminus\Sigma$, hence $u_p$ is psh since it satisfies the subaverage inequality. This implies that $\gamma_p$ is a positive closed current on $X\setminus\Sigma$. We may assume that there exists coordinates $(z_1,\dots,z_n)$ on $U_\alpha$ so that $x=0$ and $U_\alpha\cap\Sigma$ is contained in the cone $\{|z_n|\leq\max(|z_1|,\dots,|z_{n-1}|)\}$. Applying the maximum principle on complex lines parallel to the $z_n$ axis, we see that there exist a neighborhood $V\subset  U_\alpha$ of $x$ and a compact set $K\subset U_\alpha\setminus\Sigma$ so that $\sup_{z\in V}e^{u_p}\leq\sup_{z\in K}e^{u_p}$. 
This implies that the function $u_p$ is bounded above, hence psh, on $V$. So $\gamma_p$ extends to a positive closed current on $X$ defined locally by formula \eqref{e:gammap}. The current $\gamma_p$ does not depend on the choice of basis $\{S^p_j\}$ since the function $P_p$ is independent of the choice of basis.

\smallskip

\par$(iii)$ If $x\in\Sigma$ and $U_\alpha$ are as above, then by $(ii)$ $\log P_p=u_p-2p\psi_\alpha$, $\Omega^n$-a.e. on $U_\alpha$. So $\log P_p$ is locally the difference of two psh functions.
\end{proof}

\begin{Lemma}\label{L:gammapk} If $\dim \Sigma\leq n-k$ for some $2\leq k\leq n$ and hypothesis \eqref{e:mainhyp} holds then the currents $\gamma^k$ and $\gamma_p^k$ are well defined on $X$, respectively on each relatively compact neighborhood of $\Sigma$, for all $p$ sufficiently large.  
\end{Lemma}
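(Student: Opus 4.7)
The plan is to reduce both statements to the local criterion of Demailly (Corollary~2.11 of~\cite{D93}) recalled in Section~\ref{SS:ppt}, applied to the psh local potentials of $\gamma$ and $\gamma_p$, with $\Sigma$ (of codimension at least $k$) essentially playing the role of the unbounded locus.

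For $\gamma^k$ on $X$ the argument is direct. On any coordinate chart $U_\alpha$ with holomorphic frame $e_\alpha$ for $L$, the local weight $\psi_\alpha$ is psh and, by assumption~(B), continuous and hence locally bounded on $U_\alpha\setminus\Sigma$. Since $\codim\Sigma\geq k$, the choice $u_1=\cdots=u_k=\psi_\alpha$ and $A_1=\cdots=A_k=\Sigma\cap U_\alpha$ satisfies the hypothesis of Demailly's criterion, so $(dd^c\psi_\alpha)^k=\gamma^k|_{U_\alpha}$ is a well-defined positive closed current, and these pieces glue to $\gamma^k$ on $X$.

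For $\gamma_p^k$ on a relatively compact neighborhood $V$ of $\Sigma$, cover $\overline V$ by finitely many coordinate charts $U_\alpha\Subset X$ with frames $e_\alpha$ and weights $\psi_\alpha$. Writing $S_j^p=s_j^{p,\alpha}e_\alpha^{\otimes p}$, Lemma~\ref{L:gammap}(ii) identifies $2\gamma_p|_{U_\alpha}=dd^c u_p^{(\alpha)}$ with $u_p^{(\alpha)}=\log\sum_j|s_j^{p,\alpha}|^2$ psh on $U_\alpha$; on $U_\alpha\setminus\Sigma$ one has $u_p^{(\alpha)}=\log P_p+2p\psi_\alpha$, and both $\psi_\alpha$ and $\log P_p$ (the latter by Lemma~\ref{L:Bergfcn}) are continuous wherever $P_p>0$. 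To apply Demailly's criterion on $U_\alpha$ with $A_j=\Sigma\cap U_\alpha$ and conclude that $(dd^c u_p^{(\alpha)})^k$ is well-defined, it suffices to show that for $p$ sufficiently large the polar set of $u_p^{(\alpha)}$---namely the analytic base locus $B_p=\bigcap_j\{s_j^{p,\alpha}=0\}$---does not intersect $V\setminus\Sigma$; patching then yields $\gamma_p^k$ on $V$.

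The main obstacle is precisely this verification, since hypothesis~\eqref{e:mainhyp} directly controls $\log P_p/p$ only on compact subsets of $X\setminus\Sigma$, whereas $V\setminus\Sigma$ need not be compactly contained there. I would argue by contradiction: a subsequence $p_n\to\infty$ with $y_n\in B_{p_n}\cap(V\setminus\Sigma)$ would, by compactness of $\overline V$, accumulate at some $\overline y\in\overline V$. If $\overline y\in X\setminus\Sigma$, a fixed compact neighborhood of $\overline y$ in $X\setminus\Sigma$ together with \eqref{e:mainhyp} forces $P_{p_n}(y_n)>0$ for $n$ large, contradicting $y_n\in B_{p_n}$. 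The remaining case $\overline y\in\Sigma$ requires exploiting analyticity of $B_{p_n}$ to show that an irreducible component through $y_n$ not contained in $\Sigma$ must also meet a fixed compact of $X\setminus\Sigma$, again contradicting~\eqref{e:mainhyp}; this is the delicate technical step of the proof.
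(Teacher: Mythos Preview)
Your treatment of $\gamma^k$ matches the paper. The gap is in your handling of $\gamma_p^k$: you aim to show that the base locus $B_p$ does not meet $V\setminus\Sigma$ at all for $p$ large, so that you can apply Demailly's criterion with unbounded locus $\Sigma\cap U_\alpha$. This is stronger than what is needed, and your sketch of the ``delicate technical step'' does not close. If the irreducible component of $B_{p_n}$ through $y_n$ has dimension $\leq n-k$ (for instance, if it is a single point), nothing forces it to meet any fixed compact set of $X\setminus\Sigma$; such small components can drift toward $\Sigma$ as $p\to\infty$ without violating~\eqref{e:mainhyp}. So your contradiction argument, as stated, fails exactly in the case that is harmless for the conclusion.

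The paper sidesteps this by proving only that $\dim(A_p\cap U)\leq n-k$ for $p$ large, where $A_p=\{S_j^p=0\ \forall j\}$ and $U\Supset\Sigma$ is relatively compact; one then applies Demailly's criterion with unbounded locus $A_p\cap U_\alpha$ rather than $\Sigma\cap U_\alpha$. The contradiction is obtained as follows: if $A_{p_j}\cap U$ had an irreducible component $Y_j$ of dimension $m>n-k$ for $p_j\to\infty$, then \eqref{e:mainhyp} forces $Y_j$ into any $\varepsilon$-neighborhood of $\Sigma$ for $j$ large, so the $Y_j$ are compact. Normalizing the currents of integration $[Y_j]$ to unit mass and passing to a weak limit $T$, one gets a nonzero positive closed $(m,m)$-current supported on $\Sigma$; since $\dim\Sigma\leq n-k<m$, the support theorem gives $T=0$, a contradiction. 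The key difference from your approach is that the paper exploits the \emph{dimension} of putative bad components (via a compactness-of-currents argument), rather than trying to exclude points of $B_p$ outside $\Sigma$ altogether.
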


\begin{proof} The current $\gamma^k$ is well defined by \cite[Corollary 2.11]{D93}, since $\dim \Sigma\leq n-k$. 

\par Let $A_p=\{x\in X:\,S^p_j(x)=0,\;\forall\,j\geq1\}$. Lemma \ref{L:gammap} shows that the current $\gamma_p$ has local psh potentials which are continuous away from $A_p$. By \cite[Corollary 2.11]{D93}, it suffices to show that given any relatively compact neighborhood $U$ of $\Sigma$ we have $\dim(A_p\cap U)\leq n-k$ for all $p$ sufficiently large. 

\par Assuming the contrary, there exist $m>n-k$ and a sequence $p_j\to\infty$ so that each analytic set $A_{p_j}\cap U$ has an irreducible component $Y_j$ of dimension $m$. It follows from \eqref{e:mainhyp} that, given any $\varepsilon$-neighborhood $V_\varepsilon$ of $\Sigma$, $Y_j\subset A_{p_j}\cap U\subset V_\varepsilon$ for all $j$ sufficiently large, hence $Y_j$ are compact. Let $|Y_j|=\int_{Y_j}\Omega^m$ and $T_j=[Y_j]/|Y_j|$, where $[Y_j]$ denotes the current of integration on $Y_j$. Since $T_j$ have unit mass, we may assume by passing to a subsequence that $T_j$ converges weakly to a positive closed current $T$ of bidimension $(m,m)$. But $T$ is supported by $\Sigma$, so $T=0$ by the support theorem as $\dim\Sigma\leq n-k<m$. On the other hand $\langle T,\Omega^m\rangle=\lim_{j\to\infty}\langle T_j,\Omega^m\rangle=1$, a contradiction. 
\end{proof}

\medskip

\par We will need the following local property of the complex Monge-Amp\`ere operator:

\medskip

\begin{Theorem}\label{T:locMA} Let $U$ be an open set in ${\mathbb C}^n$, $\Sigma$ be a proper analytic suvariety of $U$, and $v$ be a psh function on $U$ which is continuous on $U\setminus\Sigma$. Assume that $v_p$, $p\geq1$, are psh functions on $U$ so that $v_p\to v$ locally uniformly on $U\setminus\Sigma$. Then:

\par (i) The sequence $\{v_p\}$ is locally uniformly upper bounded in $U$.

\par (ii) Assume that $\dim\Sigma\leq n-k$ and the currents $(dd^cv_p)^k$ are well defined on $U$ for some $k\geq1$. Then $(dd^cv_p)^k\to(dd^cv)^k$ weakly in the sense of currents on $U$.
\end{Theorem}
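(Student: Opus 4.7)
For part (i), the bound is immediate at points of $U \setminus \Sigma$ by local uniform convergence to the continuous function $v$. For $x_0 \in \Sigma$, I would use a maximum-principle argument along a complex line. Since $\Sigma$ is a proper analytic subvariety, a generic direction $e \in \mathbb{C}^n$ has the property that the complex line $\{x_0 + te : t \in \mathbb{C}\}$ meets $\Sigma$ only in a discrete set near $x_0$. Fixing such $e$ and a small $r > 0$, the circle $\{x_0 + re^{i\theta}e : \theta \in [0, 2\pi]\}$ avoids $\Sigma$, and by continuity so do the circles around nearby points $x$ in a small polydisc $V \ni x_0$. The union $K = \{x + re^{i\theta}e : x \in \overline{V},\, \theta \in [0, 2\pi]\}$ is then a compact subset of $U \setminus \Sigma$ where $v_p \leq M$ uniformly (for $p$ large). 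Applying the subharmonic maximum principle to $t \mapsto v_p(x + te)$ on the disc $|t| \leq r$ yields $v_p(x) \leq \max_{|t| = r} v_p(x + te) \leq M$ on $V$, and a finite covering extends this to any compact subset of $U$.

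For part (ii), the first step is to upgrade to $L^1_{loc}(U)$ convergence. By (i) the family $\{v_p\}$ is locally uniformly bounded above, so by Hartogs--Brelot compactness for psh families, any subsequence has a further $L^1_{loc}$-convergent subsequence with limit either $\equiv -\infty$ locally or a psh function $w$. The first alternative is excluded since $v_p \to v$ pointwise on $U \setminus \Sigma$, a set of full Lebesgue measure. In the second case, passing to a pointwise a.e.\ sub-subsequence, $w = v$ a.e.\ on $U \setminus \Sigma$ (hence on $U$, as $\Sigma$ has measure zero), and since two psh functions agreeing a.e.\ are equal, $w = v$. Thus $v_p \to v$ in $L^1_{loc}(U)$, which immediately settles the case $k = 1$.

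For $k \geq 2$, I would proceed by induction, assuming $(dd^c v_p)^{k-1} \to (dd^c v)^{k-1}$ weakly. Since both $v_p$ (for large $p$) and $v$ are locally bounded on $U \setminus \Sigma$, the classical Bedford--Taylor continuity theorem gives $(dd^c v_p)^k \to (dd^c v)^k$ weakly on $U \setminus \Sigma$. To transfer the convergence to all of $U$, the plan is: show that $(dd^c v_p)^k$ has locally uniformly bounded mass, so that weak subsequential limits exist; then show any such limit $S$ equals $(dd^c v)^k$. Both are positive closed of bidegree $(k,k)$, agree off $\Sigma$, and $\dim \Sigma \leq n-k$; by the support theorem their difference is a combination of currents of integration on the components of $\Sigma$ of dimension exactly $n-k$. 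The main obstacle is therefore controlling this difference, which I would address by writing $(dd^c v_p)^k = dd^c\bigl(v_p\,(dd^c v_p)^{k-1}\bigr)$ and estimating $\int (1 - \chi_\epsilon)\,|v_p|\, d\sigma_p$ near $\Sigma$, where $\chi_\epsilon$ is a cutoff supported off $\Sigma$ and $\sigma_p$ denotes the trace measure of $(dd^c v_p)^{k-1}$. The trace measure of $(dd^c v)^{k-1}$ puts no mass on $\Sigma$ (since $\dim \Sigma < n - k + 1$), so by the inductive weak convergence the $\sigma_p$-mass of a shrinking neighborhood of $\Sigma$ tends to zero uniformly in large $p$; combined with a Chern--Levine--Nirenberg type estimate and the uniform upper bound on $v_p$ from (i), this should give the convergence of the second term to $0$ uniformly in $p$ as $\epsilon \to 0$.
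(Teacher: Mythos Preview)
Your argument for (i) is correct and essentially the same as the paper's: both use the maximum principle along complex lines transverse to $\Sigma$. Your $L^1_{loc}$ convergence step and the case $k=1$ are also fine.

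The difficulty is in the inductive step for $k\geq 2$, where your sketch has two real gaps.

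\textbf{Mass bound.} You state as part of the plan that $(dd^c v_p)^k$ has locally uniformly bounded mass, but give no argument. Writing $(dd^c v_p)^k = dd^c\bigl(v_p(dd^c v_p)^{k-1}\bigr)$ and integrating against a cutoff $\chi\,\beta^{n-k}$ throws the problem onto $\int v_p\,dd^c\chi\wedge(dd^c v_p)^{k-1}\wedge\beta^{n-k}$; when $\Sigma$ has positive dimension you cannot choose $\chi$ so that $\mathrm{supp}\,dd^c\chi$ avoids $\Sigma$, and on that support $v_p$ is only bounded above, not in absolute value. The paper gets this bound by applying Oka's inequality for currents (Forn\ae ss--Sibony) to the negative current $T=v_p(dd^c v_p)^{k-1}$, which satisfies $dd^cT\geq 0$: this transfers the mass estimate to a Hartogs figure lying entirely in $U\setminus\Sigma$, where uniform convergence is available. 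Your inductive hypothesis alone does not give you this.

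\textbf{Identifying the limit on the top-dimensional components.} Your plan to control $\int(1-\chi_\varepsilon)|v_p|\,d\sigma_p$ breaks down for the same reason: the uniform upper bound from (i) says nothing about $|v_p|$, and in the intended application $v_p$ genuinely tends to $-\infty$ on the base locus. Knowing that $\sigma_p$ puts little mass near $\Sigma$ (from the inductive hypothesis and the fact that $(dd^cv)^{k-1}$ does not charge $\Sigma$) is not enough, since $|v_p|$ may blow up precisely there; no standard Chern--Levine--Nirenberg inequality gives smallness of $\int_K|v_p|\,d\sigma_p$ as $K$ shrinks to $\Sigma$ uniformly in $p$. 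The paper handles this differently: after Siu's decomposition of the limit and of $(dd^cv)^k$, it compares the coefficients on each $(n-k)$-dimensional component $Y_j$ by slicing transversally to $Y_j$. On each slice one is back in the situation of Step~1 (top-degree Monge--Amp\`ere, $\Sigma$ reduced to an isolated point), where one can choose a cutoff $\chi_1(z')$ with $dd^c\chi_1$ supported away from the slice of $\Sigma$, and then the integration-by-parts argument goes through cleanly.

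So the missing ingredients are Oka's inequality for the mass bound and slicing (reducing to the zero-dimensional case) for the identification of Lelong numbers; the purely inductive/CLN route you outline does not close without them.
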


\begin{proof} $(i)$ The sequence $\{v_p\}$ is clearly locally uniformly upper bounded in $U\setminus\Sigma$. If $x\in\Sigma$ we may assume that there exists coordinates $(z_1,\dots,z_n)$ on some neighborhood $V$ of $x=0$ so that $V\cap\Sigma$ is contained in the cone $\{|z_n|\leq\max(|z_1|,\dots,|z_{n-1}|)\}$. Applying the maximum principle on complex lines parallel to the $z_n$ axis, we see that there exist a neighborhood $V_1\subset V$ of $x$ and a compact set $K\subset V\setminus\Sigma$ so that $\sup_{V_1}v_p\leq\sup_Kv_p$. Hence $\{v_p\}$ is uniformly upper bounded on $V_1$.

\medskip

\par $(ii)$ Recall that the current $(dd^cv)^k$ is well defined on $U$ since $\dim\Sigma\leq n-k$ \cite[Corollary 2.11]{D93}). Since $v_p\to v$ locally uniformly on $U\setminus\Sigma$ and $v$ is continuous there we have that $(dd^cv_p)^k\to(dd^cv)^k$ weakly in the sense of currents on $U\setminus\Sigma$ (see e.g. \cite{BT76,BT82}, also \cite[Corollary 1.6]{D93}). We divide the proof into three steps. 

\smallskip

\par{\em Step 1.} We prove here assertion $(ii)$ when $k=n$. Then $\Sigma$ consists of isolated points of $U$. Let $x\in\Sigma$ and $\chi\geq0$ be a smooth function with compact support in $U$ so that $\chi=1$ near $x$ and ${\rm supp}\,\chi\cap\Sigma=\{x\}$. Then 
$$\int\chi\,(dd^cv_p)^n=\int v_p(dd^cv_p)^{n-1}\wedge dd^c\chi\to\int v(dd^cv)^{n-1}\wedge dd^c\chi=\int\chi\,(dd^cv)^n,$$
since $v_p\to v$ locally uniformly in a neighborhood of ${\rm supp}\,dd^c\chi$ and $v$ is continuous there.
This shows that the sequence of positive measures $(dd^cv_p)^n$ has locally bounded mass on $U$ and that if $\nu$ is any weak limit point of this sequence then $\nu(\{x\})=(dd^cv)^n(\{x\})$ for each $x\in\Sigma$. It follows that  $(dd^cv_p)^n\to(dd^cv)^n$ weakly in the sense of measures on $U$.

\smallskip

\par We assume in the sequel that $1\leq k\leq n-1$. 

\smallskip

\par{\em Step 2.} We show that the currents $(dd^cv_p)^k$ have locally uniformly bounded mass in $U$. Note that we only have to show this near points $x\in\Sigma$. The proof is quite standard in the case $k=1$ and when $k>1$ it follows from Oka's inequality for currents due to Forn\ae ss and Sibony \cite{FS95}. 

\par Consider first the case $k=1$. Fix $V\subset U$ a relatively compact neighborhood of $x$ and compacts $K_j\subset V$ so that $x\in{\rm int}\,K_1$, $K_1\subset{\rm int}\,K_2$, and $K_3\subset V\setminus\Sigma$ has positive Lebesgue measure. Subtracting a constant we may assume that $v_p,v<0$ on $V$. There exists a constant $C(K_1,K_2)$ so that $\|dd^cv_p\|_{K_1}\leq C(K_1,K_2)\int_{K_2}|v_p|$ for every $p$ (see e.g. \cite[Remark 1.4]{D93}). By \cite[Theorem 3.2.12]{Ho}, the family of psh functions $u$ on $V$ so that $u<0$ and $\int_{K_3}|u|=1$ is compact in $L^1_{loc}(V)$. Hence there exists a constant $C(K_2,K_3)$ so that $\int_{K_2}|v_p|\leq C(K_2,K_3)\int_{K_3}|v_p|$ for every $p$. We conclude that the currents $dd^cv_p$ have uniformly bounded mass on $K_1$.

\smallskip

\par Asume next that $2\leq k\leq n-1$. Let $x$ be a regular point of $\Sigma$ so that $\dim_x\Sigma=n-k$.  By a change of coordinates near $x$ we may assume that 
$$x=(1/2,\dots,1/2)\in\overline\Delta^n\subset U\,,\;\Sigma\cap\overline\Delta^n=\{z=(z_1,\ldots,z_n):\,z_1=\ldots=z_k=1/2\},$$ where $\Delta$ is the unit disc in ${\mathbb C}$. We may also assume that $v_p,v<0$ near $\overline\Delta^n$. Consider the $(k-1,n-k+1)$ Hartogs figure 
\begin{eqnarray*}
H&=&\{(z',z'')\in{\mathbb C}^{k-1}\times{\mathbb C}^{n-k+1}:\,\|z'\|\leq1,\,\|z''\|\leq1/4\}\cup\\ &&
\{(z',z'')\in{\mathbb C}^{k-1}\times{\mathbb C}^{n-k+1}:\,3/4\leq\|z'\|\leq1,\,\|z''\|\leq1\},
\end{eqnarray*}
where $\|z'\|=\max(|z_1|,\dots|z_{k-1}|)$. The current $T=v_p(dd^cv_p)^{k-1}$ is a negative current near $\overline\Delta^n$ of bidegree $(k-1,k-1)$ and $dd^cT=(dd^cv_p)^k\geq0$. By Oka's inequality applied to $T$ \cite[Theorem 2.4]{FS95},
$$\|v_p(dd^cv_p)^{k-1}\|_K+\|(dd^cv_p)^k\|_K\leq C\|v_p(dd^cv_p)^{k-1}\|_H$$
for some absolute constant $C$, where $K=\overline\Delta^n_{3/4}$ is the polydisc of radius $3/4$. Note that $x\in{\rm int}\,K$. Since $H\cap\Sigma=\emptyset$ we have $v_p(dd^cv_p)^{k-1}\to v(dd^cv)^{k-1}$ near $H$. It follows that $\|(dd^cv_p)^k\|_K$ are uniformly bounded. 

\par Therefore we showed that the currents $(dd^cv_p)^k$ have locally bounded mass on $U\setminus Y$, where $Y\subset\Sigma$ is an analytic set of codimension $\geq k+1$. Oka's inequality applied to the currents $(dd^cv_p)^k$ implies that they have locally uniformly bounded mass near each $y\in Y$ (see also \cite[Corollary 2.6]{FS95}). 

\smallskip

\par{\em Step 3.} We now prove that $(dd^cv_p)^k\to(dd^cv)^k$ weakly on $U$. Since the currents $(dd^cv_p)^k$ have locally uniformly bounded mass on $U$, it suffices to prove that any weak limit point $T$ of $(dd^cv_p)^k$ is equal to $(dd^cv)^k$. Let us write 
$$\Sigma=Y\cup\bigcup_{j\geq1}Y_j,$$ 
where $Y_j$ are the irreducible components of dimension $n-k$ and $\dim Y\leq n-k-1$. Recall that $T=(dd^cv)^k$ on $U\setminus\Sigma$. Hence by Federer's support theorem (\cite{Fed69}, see also \cite[Theorem 1.7]{Har77}), $T=(dd^cv)^k$ on $D=U\setminus\cup_{j\geq1}Y_j$, since $Y$ is an analytic subvariety of $D$ of dimension $\leq n-k-1$. 

\par By Siu's decomposition formula (\cite{Siu74}, see also \cite[Theorem 6.19]{D93}) we write 
$$T=R+\sum_{j\geq1}c_j[Y_j]\,,\;(dd^cv)^k=S+\sum_{j\geq1}d_j[Y_j]\,,$$
where $[Y_j]$ denotes the current of integration on $Y_j$, $c_j,d_j\geq0$, and $R,S$ are positive closed currents of bidegree $(k,k)$ on $U$ which do not charge any $Y_j$ (i.e. the trace measure of $R$ is 0 on $Y_j$). It follows by above that $R=S$. To conclude the proof we have to show that $c_j=d_j$ for each $j$. This will be done using slicing.

\par Without loss of generality, let $j=1$ and $x\in Y_1$ be a regular point of $\Sigma$. 
By a change of coordinates $z=(z',z'')$ near $x$ we may assume that $x=0\in\overline\Delta^n\subset U$ and $\Sigma\cap\Delta^n=Y_1\cap\Delta^n=\{z'=0\}$, where $z'=(z_1,\dots,z_k)$,  $z''=(z_{k+1},\dots,z_n)$. Since $v_p\to v$ locally uniformly on $U\setminus\Sigma$ and $v$ is continuous there, it follows that for each $z''\in\Delta^{n-k}$ the functions $v_{p,z''}(z')=v_p(z',z'')$, $v_{z''}(z')=v(z',z'')$, are locally bounded near the boundary of $\Delta^k$, so their Monge-Amp\`ere measures $(dd^cv_{p,z''})^k$, $(dd^cv_{z''})^k$ are well defined on $\Delta^k$ (see \cite[Corolary 2.3]{D93}). Arguing as in the proof of Step 1, it follows that $(dd^cv_{p,z''})^k\to(dd^cv_{z''})^k$ weakly on $\Delta^k$ as $p\to\infty$, for each $z''\in\Delta^{n-k}$. 

\par Let $\chi_1(z')\geq0$ (resp. $\chi_2(z'')\geq0$) be a smooth function with compact support in $\Delta^k$ (resp. $\Delta^{n-k}$) so that $\chi_1=1$ near $0\in{\mathbb C}^k$ (resp. $\chi_2=1$ near $0\in{\mathbb C}^{n-k}$). Let $\beta=i/2\sum_{j=k+1}^ndz_j\wedge d\overline z_j$ be the standard K\"ahler form in ${\mathbb C}^{n-k}$. One has the slicing formula (see e.g. \cite[formula (2.1)]{DS06})
$$\int_{\Delta^n}\chi_1(z')\chi_2(z'')(dd^cv_p)^k\wedge\beta^{n-k}=\int_{\Delta^{n-k}}\left(\int_{\Delta^k}\chi_1(z')(dd^cv_{p,z''})^k\right)\chi_2(z'')\beta^{n-k},$$
and similarly for $(dd^cv)^k$. Note that 
$$\int\chi_1\,(dd^cv_{p,z''})^k=\int v_{p,z''}(dd^cv_{p,z''})^{k-1}\wedge dd^c\chi_1.$$
Since $dd^c\chi_1$ is supported away from $\Sigma$, the Chern-Levine-Nirenberg estimates imply that this integral is locally uniformly bounded as a function of $z''$. Letting $p\to\infty$ we infer by above that 
$$\int_{\Delta^n}\chi_1(z')\chi_2(z'')T\wedge\beta^{n-k}=\int_{\Delta^n}\chi_1(z')\chi_2(z'')(dd^cv)^k\wedge\beta^{n-k}.$$

\par By Siu's decomposition formulas of $T$ and $(dd^cv)^k$, and since $R=S$, this implies that 
$$c_1\int_{\{z'=0\}}\chi_2(z'')\beta^{n-k}=d_1\int_{\{z'=0\}}\chi_2(z'')\beta^{n-k}.$$
As $\int_{\{z'=0\}}\chi_2(z'')\beta^{n-k}>0$ we see that $c_1=d_1$, and the proof is complete.
\end{proof}

\medskip

\par We finish now the proof of Theorem \ref{T:mt} by showing that $\frac{1}{p^k}\,\gamma_p^k\to\gamma^k$ weakly on $X$. Since this is local, we fix $x\in X$ and let $U_\alpha$ be a relatively compact neighborhood of $x$ such that there exists a holomorphic frame $e_\alpha$ of $L$ over $U_\alpha$. Let $\psi_\alpha$ be a psh weight of $h$ on $U_\alpha$ and let 
$$v_p=\frac{1}{2p}\log\left(\sum_{j=1}^\infty|s_j^p|^2\right),\;\text{ where }S_j^p=s_j^pe_\alpha^{\otimes p},\;s_j^p\in{\mathcal O}(U_\alpha).$$
By Lemma \ref{L:gammap} the function $v_p$ is psh on $U_\alpha$ and we have $\frac{1}{p}\,\gamma_p=dd^cv_p$, $\gamma=dd^c\psi_\alpha$. Moreover, Lemma \ref{L:gammapk} shows that the currents $(dd^cv_p)^k$ are well defined on $U_\alpha$ for all $p$ sufficiently large. Note that $\psi_\alpha$ is continuous on $U_\alpha\setminus\Sigma$. Since $\frac{1}{2p}\,\log P_p=v_p-\psi_\alpha$, 
hypothesis \eqref{e:mainhyp} implies that $v_p\to\psi_\alpha$ locally uniformly on $U_\alpha\setminus\Sigma$. It follows by Theorem \ref{T:locMA} that $\frac{1}{p^k}\,\gamma_p^k=(dd^cv_p)^k\to(dd^c\psi_\alpha)^k=\gamma^k$ weakly on $U_\alpha$. This concludes the proof of Theorem \ref{T:mt}.

\begin{Remark}\label{R:logPp} In the setting of Theorem \ref{T:mt}, assume that $\dim\Sigma\leq n-k$ and that \eqref{e:mainhyp} holds. The proof of Lemma \ref{L:gammapk} shows that all currents $\gamma_p^j\wedge\gamma^l$, $j+l\leq k$ are well defined positive closed currents on $X$. By Lemma \ref{L:gammap} $\log P_p\in L^1_{loc}(X,\Omega^n)$ and $dd^c\log P_p=2\gamma_p-2p\gamma$ is a current of order 0 on $X$. It follows that $(dd^c\log P_p)^j$, $j\leq k$, are currents of order 0 on $X$ which can be defined inductively by 
$$(dd^c\log P_p)^{j+1}=dd^c\left(\log P_p\,(dd^c\log P_p)^j\right),\;j<k,$$ 
since locally, $\log P_p$ is integrable with respect to the measure coefficients of $(dd^c\log P_p)^j$. Moreover, we have 
$$\left(\frac{1}{2p}\,dd^c\log P_p\right)^k=\left(\frac{1}{p}\,\gamma_p-\gamma\right)^k=\sum_{j=0}^k\binom{k}{j}\frac{(-1)^{k-j}}{p^j}\,\gamma_p^j\wedge\gamma^{k-j}.$$
A straightforward adaptation of the proof of Theorems \ref{T:mt} and \ref{T:locMA} shows that 
$$p^{-j}\gamma_p^j\wedge\gamma^{k-j}\to\gamma^k\,,\;{\rm as}\;p\to\infty,$$ 
weakly on $X$. Hence $p^{-j}(dd^c\log P_p)^j\to0$ as $p\to\infty$ in the weak sense of currents of order 0 on $X$, for all $1\leq j\leq k$.
\end{Remark}

\begin{Remark}\label{R:mtalt} Observe that the hypothesis $f\geq c_x>0$ $\Omega^n$-a.e. in a neighborhood $U_x$ of each $x\in\Sigma^{n-1}_{reg}$ was only needed in the proof of Lemma \ref{L:gammap} (i), i.e. to show that $H^0_{(2)}(X\setminus\Sigma,L^p)\subset H^0(X,L^p)$. Therefore, Theorem \ref{T:mt} also holds provided that $X,\Sigma,(L,h),f,\Omega$ verify assumptions (A), (B), (C') and (D), where:

\medskip

\par (C') The volume form on $X\setminus\Sigma$ is $f\Omega^n$, where $f\in L^1_{loc}(X\setminus\Sigma,\Omega^n)$ verifies $f\geq c_x>0$ $\Omega^n$-a.e. in a neighborhood $U_x$ of each $x\in X\setminus\Sigma$. 

\smallskip

\par (D) $H^0_{(2)}(X\setminus\Sigma,L^p)\subset H^0(X,L^p)$ for every $p\geq1$.

\medskip

\par This variant of Theorem \ref{T:mt} will be useful to us for some applications in Section \ref{S:ex}, where the fact that the sections in $H^0_{(2)}(X\setminus\Sigma,L^p)$ extend holomorphically to sections of $L^p$ over $X$ is known to hold by other considerations (see Sections \ref{SS:qp} and \ref{SS:aq}).
\end{Remark}

\section{Distribution of zeros of random sections}\label{S:SZ}

\par The purpose of this section is to give the proof of Theorem \ref{T:SZ}. As a consequence we show in Theorem \ref{T:rs} that zeros of random holomorphic sections are equidistributed with respect to the curvature current.

\par Let $X,\Sigma,(L,h),f,\Omega$ verify assumptions (A)-(C) stated in the introduction. In addition, we assume in this section that $X$ is {\em compact}.  By Lemma \ref{L:gammap}, $H^0_{(2)}(X\setminus\Sigma,L^p)\subset H^0(X,L^p)$. Let 
$$d_p=\dim H^0_{(2)}(X\setminus\Sigma,L^p),\;\{S^p_j\}_{1\leq j\leq d_p} \text{ a fixed orthonormal basis of }H^0_{(2)}(X\setminus\Sigma,L^p).$$ 
The currents $\gamma_p$ can now be described as pullbacks $\gamma_p=\Phi_p^\star(\omega_{FS})$, where $\Phi_p:X\dashrightarrow{\mathbb P}^{d_p-1}$ is the Kodaira map defined by $\{S^p_j\}$ and $\omega_{FS}$ is  the Fubini-Study form  on ${\mathbb P}^{d_p-1}$. Recall that if $S_j^p=s_j^pe_\alpha^{\otimes p}$ where $e_\alpha$ is a holomorphic frame for $L$ on an open set $U_\alpha$ then 
$$\Phi_p(x)=[s_1^p(x):\ldots:s_{d_p}^p(x)],\;x\in U_\alpha.$$

\par Following the framework in \cite{ShZ99}, we identify the unit sphere ${\mathcal S}^p$ of $H^0_{(2)}(X\setminus\Sigma,L^p)$ to the unit sphere ${\mathbf S}^{2d_p-1}$ in ${\mathbb C}^{d_p}$ by 
$$a=(a_1,\dots,a_{d_p})\in{\mathbf S}^{2d_p-1}\longrightarrow S_a=\sum_{j=1}^{d_p}a_jS^p_j\in{\mathcal S}^p,$$ 
and we let $\lambda_p$ be the probabilty measure on ${\mathcal S}^p$ induced by the normalized surface measure on ${\mathbf S}^{2d_p-1}$, denoted also by $\lambda_p$ (i.e. $\lambda_p({\mathbf S}^{2d_p-1})=1$). We let $\lambda_p^k$ denote the product measure on $({\mathcal S}^p)^k$ determined by $\lambda_p$. Given a nontrivial section $S\in H^0(X,L^p)$ we denote by $[S=0]$ the current of integration (with multiplicities) over the analytic hypersurface $\{S=0\}$ of $X$. 

\medskip 

\par We give now the proof of Theorem \ref{T:SZ}. Let us note that some of the main ideas involved in proving this theorem are similar to those in \cite{ShZ99,ShZ08}, however special attention has to be given as we have to work with currents rather than smooth forms and the subspaces of sections we consider have nonempty base locus. To prove assertion $(i)$ we will need the following version of Bertini's theorem:

\begin{Proposition}\label{P:Bertini} Let $L\longrightarrow X$ be a holomorphic line bundle over a compact complex manifold $X$ of dimension $n$. Assume that: 

\par (i) $V$ is a vector subspace of $H^0(X,L)$ with basis $S_1,\dots, S_d$, and with base locus $Bs(V)=\{S_1=\ldots=S_d=0\}\subset X$ so that $\dim Bs(V)\leq n-k$.

\par (ii) $Z(h):=\{x\in X:\,\sum_{j=1}^dh_jS_j(x)=0\}$, where $h=[h_1:\ldots:h_d]\in{\mathbb P}^{d-1}$.

\par (iii) $\nu_l$ is the product measure on $({\mathbb P}^{d-1})^l$ induced by the Fubini-Study volume $\mu_{d-1}$ on ${\mathbb P}^{d-1}$. 

\par Then $\dim Z(h^1)\cap\ldots\cap Z(h^k)=n-k$, for $\nu_k$-a.e. $(h^1,\dots,h^k)\in({\mathbb P}^{d-1})^k$. 
\end{Proposition}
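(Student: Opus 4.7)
The plan is to prove the proposition by induction on $k$, driving the induction by two ingredients. The first is a genericity observation: if $Y\subset X$ is an irreducible analytic subvariety with $Y\not\subset Bs(V)$, then the $\mathbb{C}$-linear map $\mathbb{C}^d\ni h\mapsto\bigl(\sum_{j=1}^d h_jS_j\bigr)\big|_Y\in H^0(Y,L|_Y)$ is nonzero, so its kernel is a proper linear subspace of $\mathbb{C}^d$; projectivizing, the set $\{h\in\mathbb{P}^{d-1}:Y\subset Z(h)\}$ is a proper projective linear subspace of $\mathbb{P}^{d-1}$ and hence $\mu_{d-1}$-null. The second ingredient is the principal ideal theorem for complex spaces: if $Y$ is irreducible analytic and $Z(h)$ is a hypersurface with $Y\not\subset Z(h)$, then every irreducible component of $Y\cap Z(h)$ has dimension $\dim Y-1$.

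For the base case $k=1$, linear independence of $\{S_j\}$ makes $\sigma_h=\sum h_jS_j$ a nonzero section for every $h\in\mathbb{P}^{d-1}$, so $Z(h)$ is a proper analytic subvariety of $X$ of pure dimension $n-1$. For the inductive step I would fix $(h^1,\ldots,h^{k-1})$ in the full-$\nu_{k-1}$-measure set on which $W:=Z(h^1)\cap\ldots\cap Z(h^{k-1})$ is pure of dimension $n-k+1$. Since $X$ is compact and $W$ analytic, $W$ has only finitely many irreducible components $Y_1,\ldots,Y_N$, each of dimension $n-k+1$. The strict inequality $\dim Y_i=n-k+1>n-k\geq\dim Bs(V)$ forces $Y_i\not\subset Bs(V)$, so by the genericity observation each $B_i:=\{h:Y_i\subset Z(h)\}$ is a $\mu_{d-1}$-null proper projective linear subspace. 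For $h^k\notin B_1\cup\cdots\cup B_N$ no $Y_i$ lies in $Z(h^k)$, and the principal ideal theorem makes each nonempty $Y_i\cap Z(h^k)$ pure of dimension $n-k$, so $W\cap Z(h^k)$ has dimension $n-k$. Fubini on $(\mathbb{P}^{d-1})^k$ then yields the $\nu_k$-a.e.\ statement.

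The main obstacle I anticipate is controlling the bad set of $h^k$ in the inductive step. Two features make this tractable: finiteness of top-dimensional irreducible components of $W$ (from compactness of $X$ together with Noetherianity of the analytic structure), and the strict dimension gap $\dim Y_i>\dim Bs(V)$, which is precisely what prevents any top-dimensional $Y_i$ from lying inside $Bs(V)$ and thereby forces $B_i$ to be a \emph{proper} linear subspace of $\mathbb{P}^{d-1}$. Interpreting the equality $\dim=n-k$ as pure codimension $k$ (with the empty set vacuously allowed), no further subtleties arise; otherwise a short extra argument is needed to produce a nonempty intersection, e.g., by exploiting that $Bs(V)\subset Z(h^j)$ for every $j$.
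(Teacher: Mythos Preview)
Your proof is correct and takes a more elementary route than the paper's. The paper proceeds in three steps: (1) an incidence-variety construction plus Sard's theorem, showing that for $\nu_{k-1}$-a.e.\ $(h^1,\dots,h^{k-1})$ the set $Z(h^1)\cap\dots\cap Z(h^{k-1})\cap(X\setminus Bs(V))$ is a \emph{smooth} submanifold of pure dimension $n-k+1$; (2) a compactness argument with currents of integration to prove that the ``good'' set $G_k=\{(h^1,\dots,h^k):\dim\bigcap_iZ(h^i)=n-k\}$ is open; (3) a slice argument which, for $(h^1,\dots,h^{k-1})\in G_{k-1}$, decomposes the bad slice as $\bigcup_j\{h:Y_j\subset Z(h)\}$ over the irreducible components $Y_j$ of the intersection, and shows each piece has $\mu_{d-1}$-measure zero. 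Your inductive step is exactly the paper's Step~3, with your ``genericity observation'' giving a cleaner reason (proper linear subspace) than the paper's explicit coordinate computation. Where you differ is in replacing Step~1 by a straight induction driven by the principal ideal theorem: you only need the dimension statement, not smoothness, so Sard is unnecessary.

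The one point you should make explicit is measurability, which is what the paper's Step~2 supplies. To apply Fubini you need the bad set $\{(h^1,\dots,h^k):\dim\bigcap_iZ(h^i)>n-k\}$ to be $\nu_k$-measurable. This follows at once from upper semicontinuity of fiber dimension for the projection of the incidence set $\{(x,h^1,\dots,h^k):x\in\bigcap_iZ(h^i)\}\subset X\times(\mathbb P^{d-1})^k$ onto the second factor, so the bad set is in fact closed; a sentence to this effect completes your argument. Your treatment of the non-emptiness issue is also adequate: for the paper's applications one has $Bs(V)\neq\emptyset$, and in any case the downstream use (well-definedness of $[\sigma_1=0]\wedge\dots\wedge[\sigma_k=0]$ via Demailly's criterion) only requires the pure-codimension statement, with the empty intersection harmless.
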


\par The proof is included at the end of this section for the convenience of the reader, since we could not find it in the literature. Assertion $(ii)$ of Theorem \ref{T:SZ} is proved by repeated application of the following proposition:

\begin{Proposition}\label{P:SZ} Let $L\longrightarrow X,\,V,\,S_1,\dots,S_d$, be as in Proposition \ref {P:Bertini}. Assume that:

\par (i) ${\mathcal S}:=\left\{\sum_{j=1}^da_jS_j:\,\sum_{j=1}^d|a_j|^2=1\right\}$ is endowed with the probability measure $\lambda$ induced via  the natural identification by the normalized surface measure on ${\mathbf S}^{2d-1}$. 

\par (ii) $\beta:=\Phi^\star(\omega_{FS})$, where $\Phi:X\dashrightarrow{\mathbb P}^{d-1}$ is the Kodaira map defined by $\{S_j\}$. 

\par (iii) $T$ is a positive closed current on $X$ of bidimension $(l,l)$, $l>0$, such that the current $[S=0]\wedge T$ is well defined for $\lambda$-a.e. $S\in{\mathcal S}$. 

\par Then the current $\beta\wedge T$ is well defined on $X$. Moreover, if $\varphi$ is a smooth $(l-1,l-1)$ form on $X$ the function $S\to\langle[S=0]\wedge T,\varphi\rangle$ is in $L^1({\mathcal S},\lambda)$ and 
$$\int_{\mathcal S}\langle[S=0]\wedge T,\varphi\rangle\,d\lambda(S)=\langle\beta\wedge T,\varphi\rangle.$$
\end{Proposition}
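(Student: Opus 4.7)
The approach is to derive everything from one averaging identity, a consequence of the unitary invariance of $\lambda$. Cover $X$ by open sets $U_\alpha$ trivializing $L$ via holomorphic frames $e_\alpha$; write $S_j = s_j^\alpha e_\alpha$ and, for $S = S_a = \sum_j a_j S_j \in \mathcal{S}$, $s_a := \sum_j a_j s_j^\alpha$. By Poincar\'e--Lelong, $[S_a = 0]|_{U_\alpha} = dd^c\log|s_a|$ and $\beta|_{U_\alpha} = dd^c\psi_\alpha$ with $\psi_\alpha := \tfrac{1}{2}\log\sum_j |s_j^\alpha|^2$. Reducing $w \in \mathbb{C}^d$ to $\|w\|e_1$ by unitary invariance gives
$$
\int_{\mathbf{S}^{2d-1}}\log|\langle a,w\rangle|\,d\lambda(a) = \log\|w\| - c_d, \qquad c_d := -\!\int_{\mathbf{S}^{2d-1}}\log|a_1|\,d\lambda(a) > 0,
$$
and specializing to $w = (s_1^\alpha(x), \ldots, s_d^\alpha(x))$ yields the pointwise identity on $U_\alpha$:
$$
\psi_\alpha(x) = \int_\mathcal{S}\log|s_a(x)|\,d\lambda(a) + c_d.
$$

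The first step is to show $\beta \wedge T$ is well defined, i.e.\ $\psi_\alpha \in L^1_{loc}(U_\alpha, \sigma_T)$, where $\sigma_T$ denotes the trace measure of $T$. Cauchy--Schwarz gives $\log|s_a(x)| \leq \psi_\alpha(x)$ for every $a \in \mathbf{S}^{2d-1}$, and $\psi_\alpha$ is locally bounded above on $U_\alpha$, so $\int_K \psi_\alpha^+\,d\sigma_T < \infty$ on any compact $K \subset U_\alpha$. By hypothesis there exists some $a^* \in \mathcal{S}$ with $\log|s_{a^*}| \in L^1_{loc}(\sigma_T)$, which supplies the complementary lower bound
$$
-\infty < \int_K \log|s_{a^*}|\,d\sigma_T \leq \int_K \psi_\alpha\,d\sigma_T.
$$
Thus $\psi_\alpha \in L^1_{loc}(\sigma_T)$, and $\beta \wedge T := dd^c(\psi_\alpha T)$ is a well-defined positive closed current on $X$.

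For the averaging formula, fix a partition of unity $\{\chi_\alpha\}$ subordinate to $\{U_\alpha\}$. The smooth form $T \wedge dd^c(\chi_\alpha\varphi)$ has coefficients dominated by $C_\alpha\sigma_T$ on some compact $K_\alpha \subset U_\alpha$. Split $\log|s_a| = \log^+|s_a| - \log^-|s_a|$: from $\log^+|s_a| \leq \psi_\alpha^+$ the positive-part integral is finite by Tonelli, while the averaging identity gives
$$
\int_\mathcal{S}\log^-|s_a|\,d\lambda = \int_\mathcal{S}\log^+|s_a|\,d\lambda - \psi_\alpha + c_d \leq \psi_\alpha^- + c_d,
$$
which is in $L^1_{loc}(\sigma_T)$ by the previous step. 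Together these prove joint $(\sigma_T\otimes\lambda)$-integrability of $\log|s_a(x)|$ on $K_\alpha \times \mathcal{S}$, which simultaneously yields the $L^1(\mathcal{S},\lambda)$ assertion of $(ii)$. Fubini then swaps the integrals,
$$
\int_\mathcal{S}\langle[S_a = 0]\wedge T, \chi_\alpha\varphi\rangle\,d\lambda(a) = \int_{U_\alpha}(\psi_\alpha - c_d)\,T\wedge dd^c(\chi_\alpha\varphi),
$$
and summing over $\alpha$ reconstructs $\langle\beta\wedge T,\varphi\rangle$; the $c_d$ contribution sums to $-c_d\langle T, dd^c\varphi\rangle = 0$ by closedness of $T$ on the compact manifold $X$.

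The main obstacle is justifying the Fubini swap, since the a.e.\ hypothesis $(iii)$ does not by itself imply joint integrability. The explicit averaging identity, through the bound $\int_\mathcal{S}\log^-|s_a|\,d\lambda \leq \psi_\alpha^- + c_d$ combined with the $L^1_{loc}(\sigma_T)$ control of $\psi_\alpha$, is precisely what upgrades the pointwise-a.e.\ assumption into joint integrability.
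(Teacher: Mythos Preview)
Your proof is correct and follows essentially the same strategy as the paper: establish $\psi_\alpha\in L^1_{loc}(\sigma_T)$ from the inequality $\log|s_a|\leq\psi_\alpha$ and the existence of one good $a^*$, then use the unitary-invariance averaging identity and Tonelli to swap integrals, with the constant killed by closedness of $T$. The paper streamlines your local $\log^\pm$ splitting and partition of unity by working instead with the globally defined nonpositive function $N(S)=\log|s_a|-\psi_\alpha$, so that Tonelli applies directly against the positive pieces of the decomposition $T\wedge dd^c\varphi=\mu_1-\mu_2$; this avoids the bookkeeping but is otherwise the same argument.
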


\par We postpone for the time being the proof of Proposition \ref{P:SZ}, and we continue with the proof of Theorem \ref{T:SZ}. 

\medskip

\par\noindent {\em Proof of Theorem \ref{T:SZ}.} $(i)$ Lemma \ref{L:gammapk} (and its proof) show that 
$$\dim Bs(H^0_{(2)}(X\setminus\Sigma,L^p))\leq n-k,$$ 
for all $p$ sufficiently large. It follows from Proposition \ref{P:Bertini} that the analytic subset $\{\sigma_1=0\}\cap\ldots\cap\{\sigma_k=0\}$ has pure dimension $n-k$ for $\lambda_p^k$-a.e. $\sigma=(\sigma_1,\ldots,\sigma_k)\in({\mathcal S}^p)^k$. Hence for such $\sigma$, the set $\{\sigma_{i_1}=0\}\cap\ldots\cap\{\sigma_{i_l}=0\}$ has pure dimension $n-l$ for every $i_1<\ldots<i_l$ in $\{1,\dots,k\}$. Therefore the current $[\sigma=0]$ is well defined \cite[Corollary 2.11]{D93}, and it equals the current of intersection with multiplicities along $\{\sigma_1=0\}\cap\ldots\cap\{\sigma_k=0\}$ \cite[Proposition 2.12]{D93}.

\smallskip

\par $(ii)$ If $\sigma=(\sigma_1,\ldots,\sigma_k)\in({\mathcal S}^p)^k$ is so that the set $\{\sigma_1=0\}\cap\ldots\cap\{\sigma_k=0\}$ has pure dimension $n-k$, Corollary 2.11 in \cite{D93} and the considerations from $(i)$ show that $[\sigma_{i_1}=0]\wedge\ldots\wedge[\sigma_{i_l}=0]\wedge\gamma_p$ is a well defined positive closed current of bidegree $(l+1,l+1)$ on $X$, for every $i_1<\ldots<i_l$ in $\{1,\dots,k\}$, $l<k$. 

\par By adding to $\varphi$ a large multiple of $\Omega^{n-k}$ we may assume that $\varphi$ is a strongly positive $(n-k,n-k)$ test form on $X$. Hence the integral in $(ii)$ can be evaluated as an iterated integral by Tonelli's theorem. We apply Proposition \ref{P:SZ} with 
$$V=H^0_{(2)}(X\setminus\Sigma,L^p),\;T=[\sigma_2=0]\wedge\ldots\wedge[\sigma_k=0].$$ 
Then for $\lambda_p^{k-1}$-a.e. $(\sigma_2,\ldots,\sigma_k)\in({\mathcal S}^p)^{k-1}$,
$$\int_{{\mathcal S}^p}\langle[\sigma=0],\varphi\rangle\,d\lambda_p(\sigma_1)=\langle T\wedge\gamma_p,\varphi\rangle=\langle[\sigma_2=0]\wedge\ldots\wedge[\sigma_k=0]\wedge\gamma_p,\varphi\rangle,$$
since $[\sigma=0]=[\sigma_1=0]\wedge T$. Proposition \ref{P:Bertini} shows that Proposition \ref{P:SZ} can be applied again for $\lambda_p^{k-2}$-a.e. $(\sigma_3,\ldots,\sigma_k)\in({\mathcal S}^p)^{k-2}$ with $T=[\sigma_3=0]\wedge\ldots\wedge[\sigma_k=0]\wedge\gamma_p$, so  
\begin{eqnarray*}
\int_{{\mathcal S}^p}\int_{{\mathcal S}^p}\langle[\sigma=0],\varphi\rangle\,d\lambda_p(\sigma_1)\,d\lambda_p(\sigma_2)&=&\int_{{\mathcal S}^p}\langle[\sigma_2=0]\wedge\ldots\wedge[\sigma_k=0]\wedge\gamma_p,\varphi\rangle\,d\lambda_p(\sigma_2)\\&=&\langle[\sigma_3=0]\wedge\ldots\wedge[\sigma_k=0]\wedge\gamma_p^2,\varphi\rangle.
\end{eqnarray*}
Continuing like this we obtain that the $k$-th iterated integral in $(ii)$ equals $\langle\gamma_p^k,\varphi\rangle$. This proves $(ii)$, and then $(iii)$ follows at once from Theorem \ref{T:mt}. $\qed$

\medskip

\par Let us now consider the probability space ${\mathcal S}_\infty=\prod_{p=1}^\infty{\mathcal S}^p$ endowed with the probability measure $\lambda_\infty=\prod_{p=1}^\infty\lambda_p$. The proof of the variance estimate from Lemma 3.3 in \cite{ShZ99} goes through with no change. Combined with Theorem \ref{T:SZ}, it implies that Theorem 1.1 of \cite{ShZ99} holds in our setting. Namely, we have the following: 

\begin{Theorem}\label{T:rs} In the setting of Theorem \ref {T:mt}, assume that $X$ is compact and that \eqref{e:mainhyp} holds. Then, in the weak sense of currents on $X$,
$$\lim_{p\to\infty}\,\frac{1}{p}\,[\sigma_p=0]=\gamma,\;for\;\lambda_\infty\text{-a.e. sequence }\{\sigma_p\}_{p\geq1}\in{\mathcal S}_\infty.$$ 
\end{Theorem}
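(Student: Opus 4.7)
The plan is to combine Theorem \ref{T:SZ}$(iii)$ for $k=1$ --- which gives $\frac{1}{p}\gamma_p=\frac{1}{p}E_p^1[\sigma=0]\to\gamma$ weakly --- with a variance estimate strong enough to invoke Borel-Cantelli, following \cite[Lemma 3.3 and Theorem 1.1]{ShZ99}. On a trivializing chart $U_\alpha\subset X$ with holomorphic frame $e_\alpha$ and psh weight $\psi_\alpha$, writing $\sigma=s_\sigma e_\alpha^{\otimes p}$ and using the local definitions of $\gamma_p$ and $P_p$, the Poincar\'e-Lelong formula yields the global identity
$$\frac{1}{p}\,[\sigma=0]-\frac{1}{p}\,\gamma_p=\frac{1}{2p}\,dd^c u_\sigma,\qquad u_\sigma:=\log\frac{|\sigma|_{h_p}^2}{P_p}\leq 0,$$
the bound $u_\sigma\leq 0$ being the Cauchy-Schwarz inequality applied to $\sigma=\sum_j a_j S_j^p$ with $(a_j)\in\mathbf{S}^{2d_p-1}$. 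Pairing with a smooth $(n-1,n-1)$ test form $\varphi$,
$$Y_p(\sigma,\varphi):=\left\langle\frac{1}{p}[\sigma=0]-\frac{1}{p}\gamma_p,\,\varphi\right\rangle=\frac{1}{2p}\int_X u_\sigma\,dd^c\varphi.$$

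The main step is the variance bound
$$\mathrm{Var}_p(\varphi):=\int_{\mathcal{S}^p}|Y_p(\sigma,\varphi)|^2\,d\lambda_p(\sigma)\leq\frac{C_\varphi}{p^2},$$
which I would prove exactly as in \cite[Lemma 3.3]{ShZ99}. Under the isometry $\mathcal{S}^p\cong\mathbf{S}^{2d_p-1}\subset\mathbb{C}^{d_p}$ induced by the orthonormal basis $\{S_j^p\}$, the value $u_\sigma(x)$ depends on $a\in\mathbf{S}^{2d_p-1}$ only through the modulus of the Hermitian inner product of $a$ with the unit vector $v_p(x)/\|v_p(x)\|\in\mathbb{C}^{d_p}$, where $v_p(x):=(s_1^p(x),\ldots,s_{d_p}^p(x))$. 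By unitary invariance of $\lambda_p$, the two-point correlation $\int_{\mathcal{S}^p} u_\sigma(x)\,u_\sigma(y)\,d\lambda_p(\sigma)$ reduces to a universally bounded two-variable integral that depends only on the angle between the unit vectors at $x$ and $y$; the $p^{-2}$ factor is inherited from the prefactor in $Y_p$. Chebyshev's inequality then gives $\lambda_p\{|Y_p(\cdot,\varphi)|>\varepsilon\}=O(\varepsilon^{-2}p^{-2})$, which is summable in $p$, so the Borel-Cantelli lemma produces a $\lambda_\infty$-null set $N_\varphi\subset\mathcal{S}_\infty$ off which $Y_p(\sigma_p,\varphi)\to 0$. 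Combined with Theorem \ref{T:SZ}$(iii)$, this yields $\langle\frac{1}{p}[\sigma_p=0],\varphi\rangle\to\langle\gamma,\varphi\rangle$ off $N_\varphi$.

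To globalize, I would fix a countable family $\{\varphi_k\}$ of smooth $(n-1,n-1)$ test forms on $X$ dense in the $\mathcal{C}^0$-topology and set $N:=\bigcup_k N_{\varphi_k}$, which is still $\lambda_\infty$-null. All currents $\frac{1}{p}[\sigma_p=0]$ represent the de Rham class $c_1(L)$ and hence have mass uniformly bounded in $p$; off $N$ they converge against each $\varphi_k$, and this uniform mass bound upgrades test-form-by-test-form convergence to weak convergence on $X$. The main obstacle is the variance bound, but as the authors indicate, the sphere integral is purely algebraic and insensitive both to the singularities of $h$ and to a nontrivial base locus of $H^0_{(2)}(X\setminus\Sigma,L^p)$: these features enter only through $P_p$, which by Lemma \ref{L:Bergfcn} is continuous on $X\setminus\Sigma$, and that is all one needs for $u_\sigma$ and the integral identity to be meaningful, since $\Sigma$ has zero Lebesgue measure.
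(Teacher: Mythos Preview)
Your proposal is correct and follows exactly the approach the paper indicates: the authors simply state that ``the proof of the variance estimate from Lemma 3.3 in \cite{ShZ99} goes through with no change'' and that combined with Theorem \ref{T:SZ} this yields the result, which is precisely your decomposition $\frac{1}{p}[\sigma=0]=\frac{1}{p}\gamma_p+\frac{1}{2p}dd^cu_\sigma$ together with the variance bound, Borel--Cantelli, and a countable density argument. Your exposition is in fact more detailed than the paper's; the only minor point is that the uniform mass bound on $\frac{1}{p}[\sigma_p=0]$ is cleanest justified via a Gauduchon form (as the paper does elsewhere) rather than by invoking the de Rham class directly, since $\Omega$ is not assumed closed.
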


\medskip

\par\noindent{\em Proof of Proposition \ref{P:SZ}.} We fix a holomorphic frame $e_\alpha$ of $L$ over an open set $U_\alpha$, and write $S=se_\alpha$, $S_j=s_je_\alpha$, where $S=\sum_{j=1}^da_jS_j\in{\mathcal S}$ is chosen so that the current $[S=0]\wedge T$ is well defined. Then 
$$\log|s|=\log\left|\sum_{j=1}^da_js_j\right|\leq\frac{1}{2}\,\log\left(\sum_{j=1}^d|s_j|^2\right).$$ 
Since the latter function is locally bounded above on $U_\alpha$ and $\log|s|\in L^1_{loc}(U_\alpha,|T|)$, we conclude that $\log(\sum_{j=1}^d|s_j|^2)\in L^1_{loc}(U_\alpha,|T|)$, so $\beta\wedge T$ is well defined. 

\par For $S\in{\mathcal S}$ we define the function $N(S)$ on $X$ by 
$$N(S)\mid_{_{U_\alpha}}=\log\frac{|s|}{\sqrt{|s_1|^2+\ldots+|s_d|^2}}\;.$$
Note that $N(S)\in L^1(X,\Omega^n)$, where $\Omega$ is a smooth positive $(1,1)$ form on $X$, since it is locally the difference of psh functions. Moreover, for $\lambda$-a.e. $S\in{\mathcal S}$, $N(S)\in L^1(X,|T|)$. Therefore we have 
$$[S=0]=\beta+dd^cN(S),\;[S=0]\wedge T=\beta\wedge T+dd^c(N(S)T).$$
Indeed, the first formula follows from the definition of the function $N(S)$, while for the second, working locally on $U_\alpha$, we have 
\begin{eqnarray*}
[S=0]\wedge T&=&dd^c(\log|s|\,T)=dd^c\left(\log\sqrt{|s_1|^2+\ldots+|s_d|^2}\;T\right)+dd^c(N(S)T)\\
&=&\beta\wedge T+dd^c(N(S)T).
\end{eqnarray*}
Thus, for $\lambda$-a.e. $S\in{\mathcal S}$, 
$$\langle[S=0]\wedge T,\varphi\rangle=\langle\beta\wedge T,\varphi\rangle+\int_XN(S)\,T\wedge dd^c\varphi,$$
and the proof is finished if we show that the function $S\to\int_XN(S)\,T\wedge dd^c\varphi$ belongs to $L^1({\mathcal S},\lambda)$ and 
$$\int_{{\mathcal S}}\left(\int_XN(S)\,T\wedge dd^c\varphi\right)d\lambda(S)=0\,.$$

\par We may assume that $\varphi$ is real, so $dd^c\varphi$ is a real $(l,l)$ from on $X$. There exists a constant $M$ so that $dd^c\varphi+M\Omega^l$ is a strongly positive $(l,l)$ form, so $T\wedge(dd^c\varphi+M\Omega^l)$ is a positive measure. It follows that we can write $T\wedge dd^c\varphi=\mu_1-\mu_2$, where $\mu_j$ are positive measures and $N(S)\in L^1(X,\mu_j)$ for $\lambda$-a.e. $S\in{\mathcal S}$. Note also that $N(S)\leq0$ on $X$. Hence by Tonelli's theorem,
$$\int_{{\mathcal S}}\left(\int_XN(S)\,d\mu_j\right)d\lambda(S)=\int_X\left(\int_{{\mathcal S}}N(S)\,d\lambda(S)\right)d\mu_j\,.$$

\par Since on $U_\alpha$ the function $\log(|s_1|^2+\ldots+|s_d|^2)$ is locally integrable with respect to $|T|$, hence with respect to $\mu_j$, we have $|s_1|^2+\ldots+|s_d|^2>0$ $\mu_j$-a.e. on $U_\alpha$. So  
$$u_\alpha:=\left(\frac{s_1}{\sqrt{|s_1|^2+\ldots+|s_d|^2}}\,,\ldots,\frac{s_d}{\sqrt{|s_1|^2+\ldots+|s_d|^2}}\right),$$
is a well defined function $\mu_j$-a.e. on $U_\alpha$ with values in the unit sphere ${\mathbf S}^{2d-1}$ in ${\mathbb C}^d$. We have 
$$N(S)=N(a_1S_1+\ldots+a_dS_d)=\log|a\cdot u_\alpha| \text{ on } U_\alpha,$$ 
where $a=(a_1,\dots,a_d)$ and $a\cdot u=a_1u_1+\ldots+a_du_d$. Therefore 
$$\int_{{\mathcal S}}N(S)(x)\,d\lambda(S)=\int_{{\mathbf S}^{2d-1}}\log|a\cdot u_\alpha(x)|\,d\lambda(a)=C_d,$$
for $\mu_j$-a.e. $x\in U_\alpha$, where $C_d<0$ is a dimensional constant. It follows that 
$$\int_{{\mathcal S}}\left(\int_XN(S)\,d\mu_j\right)d\lambda(S)=C_d\mu_j(X)>-\infty,$$
so the function $S\to\int_XN(S)\,d\mu_j$ is in $L^1({\mathcal S},\lambda)$, hence so is the function 
$$S\to\int_XN(S)\,T\wedge dd^c\varphi=\int_XN(S)\,d\mu_1-\int_XN(S)\,d\mu_2.$$
Finally, since $T$ is closed we have 
$$\int_{{\mathcal S}}\left(\int_XN(S)\,T\wedge dd^c\varphi\right)d\lambda(S)=C_d(\mu_1(X)-\mu_2(X))=C_d\int_X T\wedge dd^c\varphi=0.$$
This concludes the proof. $\qed$

\medskip

\par\noindent{\em Proof of Proposition \ref{P:Bertini}.} We divide the proof in three steps.

\smallskip

\par {\em Step 1.} We show that for $\nu_{k-1}$-a.e. $(h^1,\dots,h^{k-1})\in({\mathbb P}^{d-1})^{k-1}$ the set 
$$Z(h^1)\cap\ldots\cap Z(h^{k-1})\cap(X\setminus Bs(V))$$ 
is a complex submanifold of $X\setminus Bs(V)$ of dimension $n-k+1$. 

\par Consider the set ${\mathcal I}\subset(X\setminus Bs(V))\times({\mathbb P}^{d-1})^{k-1}$ defined by 
$$(x,h^1,\dots,h^{k-1})\in{\mathcal I}\Longleftrightarrow\sum_{j=1}^dh_j^iS_j(x)=0,\;1\leq i\leq k-1,$$
where $h^i=[h_1^i:\dots:h_d^i]$. If $z=(x,h^1,\dots,h^{k-1})\in{\mathcal I}$ then $x\not\in Bs(V)$, and we may assume that for each $i$, $h^i_{j_i}=1$ for some $1\leq j_i\leq d$. For each $i$ there exists $l_i\neq j_i$ so that $S_{l_i}(x)\neq0$. Indeed, otherwise $S_l(x)=0$ for all $l\neq j_i$, so 
$$S_{j_i}(x)=\sum_{l\neq j_i}h^i_lS_l(x)+S_{j_i}(x)=0,$$
hence $x\in Bs(V)$, a contradiction. We obtain that for $z'=(x',\zeta^1,\dots,\zeta^{k-1})$ near $z$, ${\mathcal I}$ is the graph
$$\zeta^i_{l_i}=-\frac{S_{j_i}(x')}{S_{l_i}(x')}-\sum_{l\neq l_i,j_i}\frac{S_l(x')}{S_{l_i}(x')}\,\zeta^i_l\,,\;1\leq i\leq k-1.$$
Thus ${\mathcal I}$ is a submanifold of $(X\setminus Bs(V))\times({\mathbb P}^{d-1})^{k-1}$ of dimension $n+(k-1)(d-2)$. 

\par We claim that the projection 
$$\pi_2:{\mathcal I}\longrightarrow({\mathbb P}^{d-1})^{k-1},\;\pi_2(x,h^1,\dots,h^{k-1})=(h^1,\dots,h^{k-1}),$$ 
is surjective. Indeed, $Z(h^i)\neq X$ since $(S_1,\dots,S_d)$ is a basis of $V$, so $\dim Z(h^1)\cap\ldots\cap Z(h^{k-1})\geq n-k+1$. As $\dim Bs(V)\leq n-k$, we can find $x\in Z(h^1)\cap\ldots\cap Z(h^{k-1})\cap(X\setminus Bs(V))$, so  $(x,h^1,\dots,h^{k-1})\in{\mathcal I}$. 

\par By Sard's theorem, for $\nu_{k-1}$-a.e. $(h^1,\dots,h^{k-1})\in({\mathbb P}^{d-1})^{k-1}$ the set
$$\pi_2^{-1}(h^1,\dots,h^{k-1})=\{(x,h^1,\dots,h^{k-1}):\,x\in Z(h^1)\cap\ldots\cap Z(h^{k-1})\cap(X\setminus Bs(V))\}$$
is a submanifold of ${\mathcal I}$ of dimension $\dim{\mathcal I}-(k-1)(d-1)=n-k+1$. Since $\pi_1:(X\setminus Bs(V))\times\{(h^1,\dots,h^{k-1})\}\longrightarrow X\setminus Bs(V)$ is a biholomorphism, we conclude that $Z(h^1)\cap\ldots\cap Z(h^{k-1})\cap(X\setminus Bs(V))$ is a submanifold of $X\setminus Bs(V)$ of dimension $n-k+1$. In particular, $Z(h^1)\cap\ldots\cap Z(h^{k-1})$ is analytic subset of $X$ of pure dimension $n-k+1$, smooth away from $Bs(V)$. 

\smallskip

\par {\em Step 2.} We show that the set $G_k$ is open, where 
$$G_k=\{(h^1,\dots,h^k)\in({\mathbb P}^{d-1})^k:\,\dim Z(h^1)\cap\ldots\cap Z(h^k)=n-k\}.$$
Indeed, assume for a contradiction that $(h^1,\dots,h^k)\in G_k$ but there exist sequences $h^i_N\to h^i$ in ${\mathbb P}^{d-1}$, as $N\to\infty$, so that the set $Z(h_N^1)\cap\ldots\cap Z(h_N^k)$ has an irreducible component $A_N$ of dimension $m$, for some $m>n-k$. Consider the currents $T_N=({\rm vol}\,A_N)^{-1}[A_N]$, where $[A_N]$ is the current of integration on $A_N$. Since $T_N$ have unit mass, we may assume by passing to a subsequence that $T_N$ converge weakly to a positive closed current $T$ of unit mass and bidimension $(m,m)$. Note that the sets $A_N$ cluster to the analytic set $A=Z(h^1)\cap\ldots\cap Z(h^k)$, so $T$ is supported on $A$. Since $\dim A=n-k<m$, $T=0$ by the support theorem, a contradiction. 

\smallskip

\par {\em Step 3.} We show that the complement $G_k^c=({\mathbb P}^{d-1})^k\setminus G_k$ has $\nu_k$ measure 0. Let 
$$G_{k-1}=\{(h^1,\dots,h^{k-1})\in({\mathbb P}^{d-1})^{k-1}:\,\dim Z(h^1)\cap\ldots\cap Z(h^{k-1})=n-k+1\}.$$
By steps 1 and 2, the set $G_{k-1}$ is open and $\nu_{k-1}(G_{k-1}^c)=0$. We have 
$$G_k^c\subset\left(G_{k-1}^c\times{\mathbb P}^{d-1}\right)\cup\left(G_k^c\cap(G_{k-1}\times{\mathbb P}^{d-1})\right).$$
Note that $\nu_k(G_{k-1}^c\times{\mathbb P}^{d-1})=0$ and the set $F_k=G_k^c\cap(G_{k-1}\times{\mathbb P}^{d-1})$ is $\nu_k$ measurable. 

\par For $(h^1,\dots,h^{k-1})\in G_{k-1}$ consider the slice 
\begin{eqnarray*}
F_k(h^1,\dots,h^{k-1})&=&\{h\in{\mathbb P}^{d-1}:\,(h^1,\dots,h^{k-1},h)\in F_k\}\\
&=&\{h\in{\mathbb P}^{d-1}:\,(h^1,\dots,h^{k-1},h)\in G_k^c\}.
\end{eqnarray*}
Since $G_k^c$ is closed, the above slices are closed. We are done if we show that they have $\mu_{d-1}$ measure 0. Indeed, since $F_k$ is measurable this will imply that $\nu_k(F_k)=0$. 

\par To this end we let $Y:=Z(h^1)\cap\ldots\cap Z(h^{k-1})=Y_1\cup\ldots\cup Y_N$, where $Y_l$ are the irreducible components of $Y$. Since all of them have dimension $n-k+1$ it follows that 
$$F_k(h^1,\dots,h^{k-1})=\bigcup_{j=1}^NE_j\,,\;E_j:=\{h\in{\mathbb P}^{d-1}:\,Y_j\subset Z(h)\}.$$
Note that the sets $E_j$ are closed. We will be done if we show that $\mu_{d-1}(E_j)=0$. 

\par Let us fix $j$. The basis sections $S_i$ cannot all vanish identically on $Y_j$, since $\dim Y_j=n-k+1$ and $\dim Bs(V)\leq n-k$. We may assume that $S_d\not\equiv0$ on $Y_j$. So 
$$E_j\subset\{\zeta_1=0\}\cup H_j\,,\;H_j:=\{(\zeta_2,\dots,\zeta_d)\in{\mathbb C}^{d-1}:\,[1:\zeta_2:\ldots:\zeta_d]\in E_j\}.$$
Note that $H_j$ is closed in ${\mathbb C}^{d-1}$, and we are done if we show that it has Lebesgue measure 0. This follows since for each $(\zeta_2,\dots,\zeta_{d-1})\in{\mathbb C}^{d-2}$ the slice 
$$H_j(\zeta_2,\dots,\zeta_{d-1})=\{\zeta\in{\mathbb C}:\,(\zeta_2,\dots,\zeta_{d-1},\zeta)\in H_j\}$$ 
contains at most one element. Indeed, if $\zeta\neq\zeta'\in H_j(\zeta_2,\dots,\zeta_{d-1})$ then 
$$S_1+\zeta_2S_2+\ldots+\zeta_{d-1}S_{d-1}+\zeta S_d\equiv0\,,\;S_1+\zeta_2S_2+\dots+\zeta_{d-1}S_{d-1}+\zeta'S_d\equiv0$$
on $Y_j$, hence $S_d\equiv0$ on $Y_j$, a contradiction. $\qed$

\section{Asymptotic behavior of the Bergman kernel function}\label{S:Bkf}

\par Using techniques of Demailly from \cite[Proposition 3.1]{D92}, \cite[Section 9]{D93b} we prove here two theorems about the asymptotic behavior of the Bergman kernel function. The first one hereafter holds for arbitrary singular metrics with strictly positive curvature, while the second one, Theorem \ref{T:Bkf},  shows that our hypothesis \eqref{e:mainhyp} is satisfied in a quite general setting.

\begin{Theorem}\label{T:BkfK} Let $(X,\Omega)$ be a compact $n$-dimensional K\"ahler manifold and $(L,h)$ be a holomorphic line bundle on $X$ with a singular Hermitian metric $h$ so that $c_1(L,h)$ is a strictly positive current. If $P_p,\gamma_p$ are the Bergman kernel function, resp. the Fubini-Study currents, defined by \eqref{e:Bergfcn}-\eqref{e:gammap} for the spaces $H^0_{(2)}(X,L^p)$ of $L^2$-holomorphic sections of $L^p$ relative to the metric induced by $h$ and the volume form  $\Omega^n$, then as $p\to\infty$, 
$$\frac{1}{p}\,\log P_p\to0\;\,in\;\,L^1(X,\Omega^n)\,,\;\,\frac{1}{p}\,\gamma_p\to c_1(L,h)\,,\;\,\frac{1}{p}\,[\sigma_p=0]\to c_1(L,h)\,$$
for $\lambda_\infty$-a.e. sequence $\{\sigma_p\}_{p\geq1}\in{\mathcal S}_\infty$, in the weak sense of currents on $X$, where $\mathcal{S}_\infty,\lambda_\infty$ are as in Theorem \ref{T:rs}.
\end{Theorem}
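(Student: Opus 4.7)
The plan is to follow Demailly's approximation scheme from \cite{D92,D93b}. On any chart $U_\alpha$ with frame $e_\alpha$ and psh weight $\psi_\alpha$ of $h$, write $S^p_j=s^p_j e_\alpha^{\otimes p}$ and set $u_p=\tfrac{1}{2p}\log\sum_j|s^p_j|^2$. Then $\tfrac{1}{p}\log P_p=2(u_p-\psi_\alpha)$ on $U_\alpha$ and $\tfrac{1}{p}\gamma_p=dd^c u_p$, so the first two claims reduce to showing $u_p\to\psi_\alpha$ in $L^1_{\mathrm{loc}}(U_\alpha)$; a finite cover of the compact $X$ then yields $\tfrac{1}{p}\log P_p\to 0$ in $L^1(X,\Omega^n)$, and applying $dd^c$ gives $\tfrac{1}{p}\gamma_p\to\gamma$ weakly. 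The zero-divisor statement will be obtained at the end via the Shiffman--Zelditch variance estimate.

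For the upper bound, I would apply the sub-mean value inequality to the psh function $|s|^2$ for an arbitrary section $S=se_\alpha^{\otimes p}$ with $\|S\|_p\leq 1$. For $B(z,r)\subset U_\alpha$,
\[
|s(z)|^2\leq \frac{C_n}{r^{2n}}\int_{B(z,r)}|s|^2\,d\lambda\leq \frac{C_n}{r^{2n}}\,e^{2p\sup_{B(z,r)}\psi_\alpha}\int_{B(z,r)}|s|^2 e^{-2p\psi_\alpha}\,d\lambda\leq \frac{C'}{r^{2n}}\,e^{2p\sup_{B(z,r)}\psi_\alpha}.
\]
Multiplying by $e^{-2p\psi_\alpha(z)}$ and taking the supremum over such $S$ yields $P_p(z)\leq (C'/r^{2n})\exp(2p(\sup_{B(z,r)}\psi_\alpha-\psi_\alpha(z)))$, equivalently $u_p(z)\leq \sup_{B(z,r)}\psi_\alpha+O(\log r^{-1}/p)$. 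Taking $r=1/p$ and using the upper semicontinuity of $\psi_\alpha$ yields $\limsup_p u_p(z)\leq \psi_\alpha(z)$ at every $z$ and shows that $\{u_p\}$ is locally uniformly bounded above on $X$.

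For the matching lower bound I would use the strict positivity of $\gamma$. Fix $z_0$ with $\psi_\alpha(z_0)>-\infty$ and Lelong number $\nu(\psi_\alpha,z_0)=0$; by Siu's semicontinuity this excludes only a countable union of analytic sets of codimension $\geq 1$, hence a set of $\Omega^n$-measure zero. Since $p\gamma\geq p\delta\,\Omega$ dominates the extra $\log|\cdot-z_0|^2$-type weight needed to impose a prescribed value at $z_0$, for $p$ large the Ohsawa--Takegoshi extension theorem produces a section $S\in H^0_{(2)}(X,L^p)$ with $S(z_0)=e_\alpha^{\otimes p}(z_0)$ and $\|S\|_p^2\leq C\,e^{-2p\psi_\alpha(z_0)}$, the constant $C$ being independent of $p$ and $z_0$. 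Consequently $P_p(z_0)\geq|S(z_0)|_{h_p}^2/\|S\|_p^2\geq 1/C$, i.e.\ $u_p(z_0)\geq \psi_\alpha(z_0)-O(1/p)$. Obtaining this uniform extension constant is the main obstacle and is the technical heart of Demailly's \cite{D92,D93b}: the hypothesis that $c_1(L,h)$ be strictly positive is exactly what is needed to guarantee it.

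Combining: $\{u_p\}$ is a locally uniformly bounded-above sequence of psh functions with $u_p\to\psi_\alpha$ pointwise a.e.; by the standard weak compactness of such sequences in $L^1_{\mathrm{loc}}$, any subsequential limit is psh and coincides a.e.\ with the usc function $\psi_\alpha$, hence equals it, so $u_p\to\psi_\alpha$ in $L^1_{\mathrm{loc}}$ and the first two conclusions follow. For the third, Proposition \ref{P:SZ} applied with $k=1$ and $T\equiv 1$ gives $E_p[\sigma=0]=\gamma_p$, and the Shiffman--Zelditch variance bound \cite[Lemma~3.3]{ShZ99} gives $\int_{\mathcal{S}^p}|\langle\tfrac{1}{p}[\sigma_p=0]-\tfrac{1}{p}\gamma_p,\varphi\rangle|^2\,d\lambda_p=O(1/p^2)$ for each smooth test form $\varphi$. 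Borel--Cantelli applied along a countable dense family of $\varphi$'s then yields $\tfrac{1}{p}[\sigma_p=0]\to\gamma$ weakly for $\lambda_\infty$-a.e.\ sequence.
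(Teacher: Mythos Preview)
Your proposal is correct and follows essentially the same route as the paper: sub-mean-value for the upper bound, an $L^2$ peak-section argument exploiting strict positivity for the lower bound (the paper does this in two steps, using Ohsawa--Takegoshi locally and then solving $\overline\partial$ globally with the auxiliary weight $2n\log|\cdot-z|$ via Theorem~\ref{T:l2}), then $L^1$-convergence and the Shiffman--Zelditch variance estimate for the random zeros. One small remark: the paper obtains the uniform lower bound $P_p(z)\geq C_1^{-1}$ at \emph{every} $z$ with $\psi_\alpha(z)>-\infty$, so your restriction to points of zero Lelong number is unnecessary, and it then passes to $L^1$ by dominated convergence using the explicit envelope \eqref{e:Bke} rather than by psh compactness.
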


\par We will need the following existence theorem for $\overline\partial$ in the case of singular Hermitian metrics. The smooth case goes back to Andreotti-Vesentini and H\"ormander, while the singular case was first observed by Bombieri and Skoda and proved in generality by Demailly \cite[Theorem 5.1]{D82}.

\begin{Theorem}[$L^2$-estimates for $\overline\partial$]\label{T:l2}
Let $(M,\Theta)$ be a complete K\"ahler manifold, $(L,h)$ be a singular Hermitian line bundle and $\varphi$ a quasi-psh function on $M$. Assume that there exist constants $a>0$, $C>0$ such that
$$c_1(L,h)>2a\Theta,\quad dd^{c}\varphi>-C\Theta,\quad c_1(K_M,h^{K_M})<C\Theta\,,$$
where $h^{K_M}$ is the metric induced on $K_M$ by $\Theta$. Then there exists $p_0=p_0(a,C)$ such that for any $p\geq p_0$ and for any form $g\in L_{0,1}^2(M,L^p)$ satisfying  
${\overline\partial}g=0$ there exists $u\in L_{0,0}^2(M,L^p)$ with $\overline\partial u=g$ and  
$$\int_M|u|^2_{h_p}e^{-\varphi}\,dv_M\leq\frac{1}{ap}\int_M|g|^2_{h_p}e^{-\varphi}\,dv_M\,.$$
\end{Theorem}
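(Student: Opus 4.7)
The plan is to adapt the classical Hörmander-Andreotti-Vesentini $L^2$-estimates, as extended by Demailly to singular metrics. The proof has three pillars: a Bochner-Kodaira-Nakano a priori inequality on $L^p$-valued $(0,1)$-forms, a Hahn-Banach/Riesz existence step that converts the a priori estimate into solvability of $\overline\partial u=g$, and a Demailly regularization to reduce singular weights to the smooth case.

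I would first treat $h$ with smooth local weights. Folding $e^{-\varphi}$ into the $L^2$ inner product defines the weighted adjoint $\overline\partial^*_\varphi$, and on compactly supported smooth $L^p$-valued $(0,1)$-forms the Bochner-Kodaira-Nakano identity yields
\begin{equation*}
\|\overline\partial v\|^2_\varphi+\|\overline\partial^*_\varphi v\|^2_\varphi\ \geq\ \int_M\bigl\langle[i\Theta^{L^p,h_p}+i\,\partial\overline\partial\varphi,\,\Lambda]\,v,v\bigr\rangle_{h_p}\,e^{-\varphi}\,dv_M,
\end{equation*}
where $\Lambda$ is the adjoint of $\Theta\wedge\,\cdot$. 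On $(0,1)$-forms the bracket $[i\alpha,\Lambda]$ is bounded below by the smallest eigenvalue of $\alpha$ relative to $\Theta$; combined with $c_1(L^p,h_p)=p\,c_1(L,h)>2ap\,\Theta$ and $dd^c\varphi>-C\Theta$, for $p\geq p_0(a,C)$ this produces the Hörmander estimate $\|\overline\partial v\|^2_\varphi+\|\overline\partial^*_\varphi v\|^2_\varphi\geq ap\,\|v\|^2_\varphi$.

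From the a priori bound, existence of $u$ is the standard duality argument. Given $g\in L^2_{0,1}(M,L^p)$ with $\overline\partial g=0$, decompose any $v\in\Dom(\overline\partial)\cap\Dom(\overline\partial^*_\varphi)$ as $v_1+v_2$ with $v_1\in\ker\overline\partial$ and $v_2\in(\ker\overline\partial)^\perp$. Since $g\perp v_2$ and $\overline\partial^*_\varphi v_2=0$ one gets $|(v,g)_\varphi|\leq(ap)^{-1/2}\|g\|_\varphi\|\overline\partial^*_\varphi v\|_\varphi$; Hahn-Banach together with Riesz representation then produces $u\in L^2_{0,0}(M,L^p)$ with $\overline\partial u=g$ and the required norm bound. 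Completeness of $(M,\Theta)$ enters here by furnishing exhausting cutoff functions $\chi_\nu$ with $|d\chi_\nu|_\Theta\to 0$, used to extend the a priori estimate from compactly supported forms to the whole domain; the hypothesis $c_1(K_M,h^{K_M})<C\Theta$ is precisely what dominates the commutator terms $[\overline\partial^*_\varphi,\chi_\nu]$ arising in this density step.

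Finally, to drop smoothness of $h$, regularize the local psh weight $\psi$ of $h$ by a decreasing sequence $\psi_\nu\downarrow\psi$ of smooth psh functions (Demailly regularization), keeping $dd^c\psi_\nu\geq 2a\Theta$ uniformly thanks to the strict positivity of $c_1(L,h)$. Solving the smooth problem for each $\psi_\nu$ produces $u_\nu$ with uniformly bounded weighted $L^2$ norm; extract a weak limit $u$ and pass to the limit in the estimate via monotone convergence for $e^{-2p\psi_\nu}\uparrow e^{-2p\psi}$ combined with lower semicontinuity of $L^2$ norms. I expect the main obstacle to be precisely this regularization coupled with the completeness/cutoff analysis: making the integration by parts underlying the Bochner-Kodaira identity rigorous for singular weights on a non-compact complete Kähler manifold is the subtlest technical point, and it is exactly there that the hypothesis $c_1(K_M,h^{K_M})<C\Theta$ becomes indispensable.
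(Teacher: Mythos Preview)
The paper does not prove this theorem; it quotes it as a known result due to Demailly \cite[Theorem 5.1]{D82} (smooth case: Andreotti--Vesentini, H\"ormander). Your overall architecture---Bochner--Kodaira--Nakano a priori estimate, Hahn--Banach/Riesz duality, Demailly regularization---is indeed the standard route and is correct in outline.

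There is, however, a genuine conceptual error in where you place the hypothesis $c_1(K_M,h^{K_M})<C\Theta$. You claim it is needed to control the commutators $[\overline\partial^{\,*}_\varphi,\chi_\nu]$ in the density step. That is not so: this commutator is interior contraction by $\overline\partial\chi_\nu$, hence pointwise bounded by $|d\chi_\nu|_\Theta$, and completeness alone ($|d\chi_\nu|_\Theta\to0$) handles it. The canonical bundle curvature enters the Bochner--Kodaira--Nakano identity itself. For $(0,1)$-forms with values in $L^p$ one uses the isometry $\Lambda^{0,1}\otimes L^p\cong\Lambda^{n,1}\otimes L^p\otimes K_M^*$, so the curvature operator appearing on the right is $[\,i\Theta^{L^p}+i\,\partial\overline\partial\varphi+iR^{K_M^*},\,\Lambda\,]$, i.e.\ your displayed inequality is missing the $iR^{K_M^*}=-iR^{K_M}$ term (compare the paper's own later use of this formula in \eqref{e:bkn}, where the curvature is explicitly that of $L^p\otimes K_X^*$). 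The bound $c_1(K_M,h^{K_M})<C\Theta$ is precisely what ensures this extra term is dominated by $p\,c_1(L,h)>2ap\,\Theta$ once $p\geq p_0(a,C)$; without it your a priori estimate $\|\overline\partial v\|^2_\varphi+\|\overline\partial^{\,*}_\varphi v\|^2_\varphi\geq ap\|v\|^2_\varphi$ does not follow from the hypotheses as written. Fixing this is easy once located, but as stated your argument assigns the hypothesis to the wrong step and the displayed Bochner inequality is incorrect for $(0,1)$-forms.
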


\medskip

\par\noindent{\em Proof of Theorem \ref{T:BkfK}.} Let $x\in X$ and $U_\alpha\subset X$ be a coordinate neighborhood of $x$ on which there exists a holomorphic frame $e_\alpha$ of $L$. Let $\psi_\alpha$ be a psh weight of $h$ on $U_\alpha$. Fix $r_0>0$ so that the ball $V:=B(x,2r_0)\subset\subset U_\alpha$ and let $U:=B(x,r_0)$.

\par We show that there exist constants $C_1>0$, $p_0\in\mathbb{N}$ so that 
\begin{equation}\label{e:Bke}
-\frac{\log C_1}{p}\leq\frac{1}{p}\,\log P_p(z)\leq\frac{\log(C_1r^{-2n})}{p}+2\left(\max_{B(z,r)}\psi_\alpha-\psi_\alpha(z)\right)
\end{equation}
holds for all $p>p_0$, $0<r<r_0$ and $z\in U$ with $\psi_\alpha(z)>-\infty$.  

\smallskip

\par For the upper estimate, fix $z\in U$ with $\psi_\alpha(z)>-\infty$ and $r<r_0$. Let $S\in H^0_{(2)}(X,L^p)$ with $\|S\|_p=1$ and write $S=se_\alpha^{\otimes p}$. Repeating an argument of Demailly we obtain 
\begin{eqnarray*}
|S(z)|^2_{h_p}&=&|s(z)|^2e^{-2p\psi_\alpha(z)}\leq e^{-2p\psi_\alpha(z)}\frac{C_1}{r^{2n}}\,\int_{B(z,r)}|s|^2\,\Omega^n\\
&\leq&\frac{C_1}{r^{2n}}\,\exp\left(2p\left(\max_{B(z,r)}\psi_\alpha-\psi_\alpha(z)\right)\right)\int_{B(z,r)}|s|^2e^{-2p\psi_\alpha}\,\Omega^n\\
&\leq&\frac{C_1}{r^{2n}}\,\exp\left(2p\left(\max_{B(z,r)}\psi_\alpha-\psi_\alpha(z)\right)\right),
\end{eqnarray*}
where $C_1$ is a constant that depends only on $x$. Hence  
$$\frac{1}{p}\,\log P_p(z)=\frac{1}{p}\,\max_{\|S\|_p=1}\log |S(z)|^2_{h_p}\leq\frac{\log(C_1r^{-2n})}{p}+2\left(\max_{B(z,r)}\psi_\alpha-\psi_\alpha(z)\right).$$
Note that this estimate holds for all $p$ and it does not require the strict positivity of the current $c_1(L,h)$, nor the hypotheses that $X$ is compact or $\Omega$ is a K\"ahler form. 

\smallskip

\par For the lower estimate in \eqref{e:Bke}, we proceed as in \cite[Section 9]{D93b} to show that there exist a constant $C_1>0$ and $p_0\in\mathbb{N}$ such that for all $p>p_0$ and all $z\in U$ with $\psi_\alpha(z)>-\infty$ there is a section $S_{z,p}\in H^0_{(2)}(X,L^p)$ with $S_{z,p}(z)\neq0$ and 
$$\|S_{z,p}\|^2_p\leq C_1|S_{z,p}(z)|^2_{h_p}\,.$$
Observe that this implies that 
$$\frac{1}{p}\,\log P_p(z)=\frac{1}{p}\,\max_{\|S\|_p=1}\log|S(z)|^2_{h_p}\geq-\frac{\log C_1}{p}\,.$$

\par Let us prove the existence of $S_{z,p}$ as above. By the Ohsawa-Takegoshi extension theorem \cite{OT87} there exists a constant $C'>0$ (depending only on $x$) such that for any $z\in U$ and any $p$ there exists a function $v_{z,p}\in{\mathcal O}(V)$ with $v_{z,p}(z)\neq0$ and 
$$\int_V|v_{z,p}|^2e^{-2p\psi_\alpha}\Omega^n\leq C'|v_{z,p}(z)|^2e^{-2p\psi_\alpha(z)}\,.$$

\par We shall now solve the $\overline\partial$-equation with $L^2$-estimates in order to extend $v_{z,p}$ to a section of $L^p$ over $X$. We apply Theorem \ref{T:l2} for $(X,\Omega)$ and $(L,h)$.
Let $\theta\in\mathcal C^\infty(\mathbb R)$ be a cut-off function such that $0\leq\theta\leq1$, $\theta(t)=1$ for $|t|\leq\frac12$, $\theta(t)=0$ for $|t|\geq1$.
Define the quasi-psh function $\varphi_z$ on $X$ by
\[
\varphi_z(y)=\begin{cases}2n\theta\big(\tfrac{|y-z|}{r_0}\big)\log\frac{|y-z|}{r_0}\,,\quad\text{for $y\in U_\alpha$}\,,\\
0,\quad\text{for $y\in X\setminus B(z,r_0)$}\,.
\end{cases}
\]
Note that there exist $a>0$, $C>0$ such that the hypotheses of Theorem \ref{T:l2} are satisfied for $(X,\Omega)$, $(L,h)$ and all $\varphi_z$, $z\in U$. Let $p_0$ be as in Theorem \ref{T:l2}. Consider the form $$g\in L^2_{0,1}(X,L^p),\;g=\overline\partial\big(v_{z,p}\theta\big(\tfrac{|y-z|}{r_0}\big)e_\alpha^{\otimes p}\big).$$
By Theorem \ref{T:l2}, for each $p>p_0$ there exists $u\in L^2_{0,0}(X,L^p)$ such that $\overline\partial u =g$ and
$$\int_X|u|^2_{h_p}e^{-\varphi_z}\,\Omega^n\leq\frac{1}{ap}\int_X|g|^2_{h_p}e^{-\varphi_z}\Omega^n<\infty\,.$$
Here the second integral is finite since $\psi_\alpha(z)>-\infty$ and 
$$\int_X|g|^2_{h_p}e^{-\varphi_z}\Omega^n=\int_V|v_{z,p}|^2|\overline\partial\theta(\tfrac{|y-z|}{r_0})|^{2}e^{-2p\psi_\alpha}e^{-\varphi_z}\Omega^n\leq C''\int_V|v_{z,p}|^2e^{-2p\psi_\alpha}\Omega^n,$$
where $C''>0$ is a constant that depends only on $x$. Near $z$, $e^{-\varphi_z(y)}=r_0^{2n}|y-z|^{-2n}$ is not integrable, thus $u(z)=0$. Define 
$$S_{z,p}:=v_{z,p}\theta\big(\tfrac{|y-z|}{r_0}\big)e_\alpha^{\otimes p}-u.$$
Then $\overline\partial S_{z,p}=0$, $S_{z,p}(z)=v_{z,p}(z)e_\alpha^{\otimes p}(z)\neq0$, $S_{z,p}\in H^0_{(2)}(X,L^p)$. Since $\varphi_z\leq0$ on $X$,
\begin{eqnarray*}
\|S_{z,p}\|^2_p&\leq&2\left(\int_V|v_{z,p}|^2e^{-2p\psi_\alpha}\Omega^n+\int_X|u|^2_{h_p}e^{-\varphi_z}\,\Omega^n\right)\\
&\leq&2C'\left(1+\frac{C''}{ap}\right)|v_{z,p}(z)|^2e^{-2p\psi_\alpha(z)}=C_1|S_{z,p}(z)|^2_{h_p},
\end{eqnarray*}
with a constant $C_1>0$ that depends only on $x$. This concludes the proof of \eqref{e:Bke}. 

\smallskip

\par Recall that $\log P_p\in L^1(X,\Omega^n)$, as it is locally the difference of psh functions. Observe that, by the upper semicontinuity of $\psi_\alpha$, \eqref{e:Bke} implies that $\frac{1}{p}\,\log P_p\to0$ as $p\to\infty$, $\Omega^n$-a.e. on $X$. Since $\psi_\alpha$ is psh on $U_\alpha$, it is integrable on $U$. By dominated convergence, \eqref{e:Bke} implies that $\frac{1}{p}\,\log P_p\to0$ in $L^1(U,\Omega^n)$, hence in $L^1(X,\Omega^n)$, so  
$$\gamma_p-c_1(L,h)=\frac{1}{2p}\,dd^c\log P_p\to0 \,\text{ weakly on } X.$$

\par The conclusion about the equidistribution of zeros of random sequences of sections now follows as in \cite[Theorem 1.1]{ShZ99} (see Section \ref{S:SZ} and Theorem \ref{T:rs}). $\qed$

\medskip

\par We return to the main setting of the paper, given by assumptions (A)-(C) stated in the introduction, and we take here $f\equiv1$.

\begin{Theorem}\label{T:Bkf} Let $X,\Sigma,(L,h),\Omega$ verify (A)-(B) and assume that $X$ is compact, $\Omega$ is a K\"ahler form, and $c_1(L,h)$ is a strictly positive current on $X$. Then \eqref{e:mainhyp} holds for the Bergman kernel function $P_p$ defined in \eqref{e:Bergfcn} for the space $H^0_{(2)}(X\setminus\Sigma,L^p)$.
\end{Theorem}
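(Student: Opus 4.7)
The plan is to reduce Theorem \ref{T:Bkf} to the two-sided local estimate \eqref{e:Bke} established during the proof of Theorem \ref{T:BkfK}, and then to upgrade its pointwise content to locally uniform convergence on $X\setminus\Sigma$ using the fact that, by hypothesis (B), the weight $\psi_\alpha$ is continuous away from $\Sigma$.

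The starting observation is that $\Sigma$ is $\Omega^n$-null, so integration against $\Omega^n$ on $X$ and on $X\setminus\Sigma$ agree on every section. Hence $H^0_{(2)}(X\setminus\Sigma,L^p)$ in the present statement is literally the same inner product space as $H^0_{(2)}(X,L^p)$ used in Theorem \ref{T:BkfK}, and the two Bergman kernel functions coincide on $X\setminus\Sigma$. I may therefore reuse the pointwise estimate proved in Theorem \ref{T:BkfK}: around any $x\in X$ with trivialization $(U_\alpha,e_\alpha,\psi_\alpha)$ there exist a neighborhood $U\subset U_\alpha$, $r_0>0$, $p_0\in\mathbb{N}$ and $C_1>0$ such that for all $p>p_0$, $0<r<r_0$ and $z\in U$ with $\psi_\alpha(z)>-\infty$,
$$-\frac{\log C_1}{p}\leq\frac{1}{p}\log P_p(z)\leq\frac{\log(C_1 r^{-2n})}{p}+2\bigl(\max_{B(z,r)}\psi_\alpha-\psi_\alpha(z)\bigr).$$
The upper bound is a submean-value argument for the holomorphic coefficient of a normalized section, while the lower bound is Demailly's peak-section construction: Ohsawa--Takegoshi produces a local holomorphic function $v_{z,p}$ concentrated at $z$, and Theorem \ref{T:l2} globalizes it through an $L^2$-solution of $\overline\partial$. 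Both steps transfer verbatim because $(X,\Omega)$ is compact K\"ahler and $c_1(L,h)$ is strictly positive.

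To conclude, fix a compact $K\subset X\setminus\Sigma$. Since $\Sigma$ is closed, I may choose the trivializations above so that each $U$ (or its double) lies in $X\setminus\Sigma$, so that $\psi_\alpha$ is continuous on $\overline U$, and then cover $K$ by finitely many such $U$. Given $\varepsilon>0$, the uniform continuity of $\psi_\alpha$ on each closed patch lets me first pick $r>0$ with $\max_{B(z,r)}\psi_\alpha-\psi_\alpha(z)<\varepsilon/4$ for every $z$ in the relevant intersection with $K$; then I pick $p$ large enough that $(\log C_1+2n|\log r|)/p<\varepsilon/2$. The displayed inequality yields $|\tfrac{1}{p}\log P_p(z)|<\varepsilon$ uniformly on $K$ for all large $p$, which is exactly \eqref{e:mainhyp}. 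The serious technical work was already done in proving the lower half of \eqref{e:Bke}; I see no substantial obstacle beyond the bookkeeping needed to promote the pointwise estimate to a uniform one on compact subsets of $X\setminus\Sigma$.
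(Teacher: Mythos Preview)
Your proposal is correct and follows essentially the same approach as the paper: both invoke the two-sided estimate \eqref{e:Bke} from the proof of Theorem \ref{T:BkfK} and then use the continuity of the weight $\psi_\alpha$ on $X\setminus\Sigma$ (from hypothesis (B)) to upgrade the pointwise statement to locally uniform convergence. Your explicit identification of $H^0_{(2)}(X\setminus\Sigma,L^p)$ with $H^0_{(2)}(X,L^p)$ and the $\varepsilon$-$r$ bookkeeping at the end are more detailed than the paper's one-line appeal to uniform continuity, but the substance is the same.
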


\begin{proof} Let $x\in X\setminus\Sigma$, $U_\alpha\subset X\setminus\Sigma$, $\psi_\alpha$, $V$, $U$, be as in the proof of Theorem \ref{T:BkfK}. Then \eqref{e:Bke} shows that $\frac{1}{p}\,\log P_p\to 0$ as $p\to\infty$ uniformly on $U$, thanks to the uniform continuity of $\psi_\alpha$ on $V$.
\end{proof}

\par Combining Theorems \ref{T:mt}, \ref{T:Bkf} and \ref{T:SZ} we obtain the following equidistribution theorem for big line bundles:

\begin{Theorem}\label{T:mt2} Let $(L,h)$ be a line bundle over the compact K\"ahler manifold $(X,\Omega)$ endowed with a singular Hermitian metric $h$ which is continuous outside a proper analytic subset $\Sigma$ and so that $\gamma:=c_1(L,h)$ is a strictly positive current. If $\gamma_p$ is the current defined by \eqref{e:gammap} for the space $H^0_{(2)}(X\setminus\Sigma,L^p)$ then $\frac{1}{p}\,\gamma_p\to\gamma$ weakly on $X$. If $\dim\Sigma\leq n-k$ for some $2\leq k\leq n$, then the currents $\gamma^k$ and $\gamma_p^k$, for all $p$ sufficiently large, are well defined and $\frac{1}{p^k}\,\gamma_p^k\to\gamma^k$ weakly on $X$. Moreover, the conclusions of Theorems \ref{T:SZ} and \ref{T:rs} hold in this setting.
\end{Theorem}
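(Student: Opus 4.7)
The proof proposal is essentially to check that the hypotheses of Theorems \ref{T:mt}, \ref{T:Bkf}, and \ref{T:SZ} are satisfied in the present setting, and then to invoke them as a package.

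First I would take $f\equiv 1$, so the volume form on $X\setminus\Sigma$ is $\Omega^n$. Assumption (A) holds because $X$ is compact, $\Sigma\subset X$ is a proper analytic subvariety, and $\Omega$ is a smooth positive $(1,1)$ form (in fact Kähler). Assumption (B) is exactly what we are given: $(L,h)$ is a singular Hermitian line bundle with continuous metric outside $\Sigma$ and (strictly) positive curvature current $\gamma$. Assumption (C) is trivial since $f\equiv 1 \geq 1$ everywhere; in particular the positivity condition on $f$ near $\Sigma_{reg}^{n-1}$ and on $X\setminus\Sigma$ is automatic.

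Next I would apply Theorem \ref{T:Bkf}: because $X$ is compact, $\Omega$ is Kähler, and $c_1(L,h)$ is strictly positive, Theorem \ref{T:Bkf} produces precisely the asymptotic condition \eqref{e:mainhyp} for the Bergman kernel function $P_p$ associated with $H^0_{(2)}(X\setminus\Sigma,L^p)$. With (A)--(C) and \eqref{e:mainhyp} in hand, Theorem \ref{T:mt} immediately yields that $H^0_{(2)}(X\setminus\Sigma,L^p)\subset H^0(X,L^p)$, that $\gamma_p$ extends to a positive closed current on $X$ defined locally by \eqref{e:gammap}, and that $\tfrac{1}{p}\gamma_p\to\gamma$ weakly on $X$. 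If furthermore $\dim\Sigma\leq n-k$ for some $2\leq k\leq n$, the same theorem gives that $\gamma^k$ is well defined on $X$, that $\gamma_p^k$ is well defined on each relatively compact neighborhood of $\Sigma$ for all large $p$, and that $\tfrac{1}{p^k}\gamma_p^k\to\gamma^k$ weakly on $X$.

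Finally, for the equidistribution of zeros of random sections, I would invoke Theorem \ref{T:SZ} (for each $1\leq k\leq n$ with $\dim\Sigma\leq n-k$), whose hypotheses -- namely (A)--(C), compactness of $X$, the dimension bound on $\Sigma$, and condition \eqref{e:mainhyp} -- have all been verified above. This yields assertions (i)--(iii) of Theorem \ref{T:SZ}, and combining with the variance estimate \cite[Lemma 3.3]{ShZ99} as in the derivation of Theorem \ref{T:rs} gives $\tfrac{1}{p}[\sigma_p=0]\to\gamma$ weakly on $X$ for $\lambda_\infty$-a.e.\ sequence $\{\sigma_p\}\in\mathcal{S}_\infty$.

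There is no genuine obstacle here: the theorem is a packaging statement, and the only non-cosmetic step is confirming that Theorem \ref{T:Bkf} applies -- i.e.\ that strict positivity of $c_1(L,h)$ and the Kähler hypothesis on $(X,\Omega)$ suffice to trigger the Bergman kernel estimate \eqref{e:Bke} used there -- which is exactly what Theorem \ref{T:Bkf} asserts.
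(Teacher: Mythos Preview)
Your proposal is correct and matches the paper's own argument exactly: the paper simply states that Theorem~\ref{T:mt2} follows by combining Theorems~\ref{T:mt}, \ref{T:Bkf} and \ref{T:SZ}, which is precisely the verification you carry out (with $f\equiv1$ so that (A)--(C) are trivially satisfied). The only cosmetic remark is that since $X$ is compact, ``$\gamma_p^k$ well defined on each relatively compact neighborhood of $\Sigma$'' from Theorem~\ref{T:mt} immediately gives $\gamma_p^k$ well defined on all of $X$, as asserted in Theorem~\ref{T:mt2}.
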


\par Note that in Theorems \ref{T:BkfK} and \ref{T:mt2} the bundle $L$ is a big line bundle and $X$ is Moishezon, by a theorem of Ji and Shiffman \cite{JS93} (cf.\ also \cite[Th.\,2.3.28,\,2.3.30]{MM07}). Hence $X$ is in fact a projective manifold, since it is assumed to be K\"ahler (see e.\,g.\ \cite[Th.\,2.2.26]{MM07}).

\section{Applications}\label{S:ex}

\par Let $X,\Sigma,(L,h),f,\Omega$ verify assumptions (A)-(C) stated in the introduction and assume in addition that $\gamma=c_1(L,h)$ is a strictly positive current. To emphasize the metrics that are used, we denote throughout this section the corresponding spaces of $L^2$-holomorphic sections by $H^0_{(2)}(X\setminus\Sigma,L^p,h,f\Omega^n)$. We discuss here several important situations in which the Bergman kernel function $P_p$ defined in \eqref{e:Bergfcn} satisfies our hypothesis \eqref{e:mainhyp}. In Sections \ref{SS:heps}, \ref{SS:sm} we consider singular Hermitian metrics on big line bundles, and we deduce equidistribution results for $L^2$ holomorphic sections with respect to the Poincar\'e metric and for sections of Nadel multiplier sheaves. In Section \ref{SS:qp} we turn to Zariski-open manifolds with bounded negative Ricci curvature, and we generalize a theorem of Tian \cite[Theorem C]{Ti90} in our framework. Natural examples of K\"ahler-Einstein manifolds of negative Ricci curvature are the arithmetic quotients. We show in Section \ref{SS:aq} how our results apply for toroidal compactifications of such manifolds. In Section \ref{SS:rmad} we point out what simplifications occur in the case of adjoint bundles.
Finally, in Sections \ref{SS:1conv}, \ref{SS:spscd} we exhibit some results for $1$-convex manifolds.

\subsection{Properties of $h_\varepsilon$}\label{SS:heps} 
For some of the applications, we will have to work with the Poincar\'e metric $\Theta$ on $X\setminus\Sigma$ and with a small perturbation $h_\varepsilon$ of the metric $h$ on $L$. Let us begin by listing certain properties of these special metrics. 

We refer to Section \ref{SS:metrics} for the construction of the metrics $\Theta,\,h_\varepsilon$, and we shall use the notations introduced there. In particular, $\Theta^n=f\Omega^n$ with a function $f$ as in (C) (see Section \ref{SSS:Theta}). Note that $h_\varepsilon$ is in fact a metric on $L\mid_{_{X\setminus Y}}$, where $Y\subset\Sigma$ is an analytic subset of dimension $\leq n-2$ (Section \ref{SSS:heps}). We recall the following fact:

\begin{Lemma}\label{L:hext} Let $L$ be a holomorphic line bundle over a complex manifold $X$ and $Y$ be an analytic subvariety of $X$ with $\codim Y\geq2$. Then any positively curved singular metric $h^L$ on $L\mid_{_{X\setminus Y}}$ extends to a positively curved singular metric on $L$. Moreover, if $c_1(L\mid_{_{X\setminus Y}},h^L)\geq\delta\Omega$ on $X\setminus Y$, for some $\delta>0$, then the same estimate holds for the curvature current of the extended metric on $X$. 
\end{Lemma}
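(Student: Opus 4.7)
The plan is to work locally and invoke the standard extension theorem for psh functions across an analytic subvariety of codimension $\geq 2$. First I would fix a trivializing cover $\{U_\alpha\}$ of $X$ with frames $e_\alpha$ and transition functions $g_{\alpha\beta}\in\cO^\star_X(U_\alpha\cap U_\beta)$. Writing $h^L(e_\alpha,e_\alpha)=e^{-2\varphi_\alpha}$ on $U_\alpha\setminus Y$, the local weights $\varphi_\alpha$ are psh on $U_\alpha\setminus Y$ (by positivity of the curvature) and satisfy the cocycle relation $\varphi_\alpha-\varphi_\beta=-\log|g_{\alpha\beta}|$ on $(U_\alpha\cap U_\beta)\setminus Y$.

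Next I would invoke the classical extension theorem for psh functions across analytic subsets of codimension $\geq 2$ (see e.g.\ Demailly, \emph{Complex Analytic and Differential Geometry}, Ch.~I, Thm.~5.24 and Cor.~5.25): since $\codim Y\geq 2$, each $\varphi_\alpha$ extends uniquely to a psh function $\tilde\varphi_\alpha$ on $U_\alpha$, explicitly by $\tilde\varphi_\alpha(x)=\limsup_{y\to x,\,y\notin Y}\varphi_\alpha(y)$. On overlaps, the identity $\tilde\varphi_\alpha-\tilde\varphi_\beta=-\log|g_{\alpha\beta}|$ extends from $(U_\alpha\cap U_\beta)\setminus Y$ to all of $U_\alpha\cap U_\beta$, because $\log|g_{\alpha\beta}|$ is pluriharmonic (hence continuous) on $U_\alpha\cap U_\beta$ while both $\tilde\varphi_\alpha$ and $\tilde\varphi_\beta$ are obtained by the same $\limsup$ procedure. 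Consequently the collection $\{\tilde\varphi_\alpha\}$ defines a positively curved singular Hermitian metric $\tilde h^L$ on $L$ extending $h^L$.

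For the strict positivity claim, I would argue as follows: suppose $c_1(L|_{X\setminus Y},h^L)\geq\delta\Omega$ on $X\setminus Y$. Shrinking $U_\alpha$ if necessary, choose a smooth local potential $\rho_\alpha$ with $dd^c\rho_\alpha=\Omega$ on $U_\alpha$. Then $\varphi_\alpha-\delta\rho_\alpha$ is psh on $U_\alpha\setminus Y$, and its $\limsup$-extension across $Y$ is psh on $U_\alpha$; by continuity of $\rho_\alpha$ this extension equals $\tilde\varphi_\alpha-\delta\rho_\alpha$. Hence $dd^c\tilde\varphi_\alpha\geq\delta\Omega$ on $U_\alpha$, which globally gives $c_1(L,\tilde h^L)\geq\delta\Omega$ on $X$. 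The one non-trivial ingredient is the codimension-$\geq 2$ extension theorem itself, which internally requires a slicing / maximum-principle argument (in the spirit of Lemma~\ref{L:gammap}(ii)) to preclude blow-up of $\varphi_\alpha$ along $Y$; everything else amounts to local bookkeeping.
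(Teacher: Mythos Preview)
Your argument is correct and the first half (extending the psh local weights across $Y$ via the codimension-$\geq 2$ extension theorem) is exactly what the paper does, only you spell out the cocycle compatibility more carefully than the paper bothers to.

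The genuine difference is in the strict-positivity clause. You subtract a local smooth potential $\rho_\alpha$ for $\Omega$, extend the psh function $\varphi_\alpha-\delta\rho_\alpha$ across $Y$, and read off $dd^c\tilde\varphi_\alpha\geq\delta\Omega$. The paper instead observes that the extended curvature current $c_1(L,\tilde h^L)$ does not charge $Y$ (Federer's support theorem for a positive closed $(1,1)$-current and $\codim Y\geq2$), and since $\Omega$ is smooth it does not charge $Y$ either, so the inequality $c_1(L,\tilde h^L)-\delta\Omega\geq0$, which holds on $X\setminus Y$, persists on all of $X$. Your route is more elementary (no support theorem needed) but tacitly uses that $\Omega$ is closed, so that local $dd^c$-potentials $\rho_\alpha$ exist; the paper's Federer argument does not need closedness of $\Omega$. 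In the applications of the paper $\Omega$ is always closed, so this is a harmless extra hypothesis, but it is worth being aware of.
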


\begin{proof} If $U_\alpha$ is a neighborhood of some point $y\in Y$ on which $L$ has a holomorphic frame $e_\alpha$, then $h^L(e_\alpha,e_\alpha)=e^{-2\varphi_\alpha}$ for some psh function $\varphi_\alpha$ on $U_\alpha\setminus Y$. Since $\codim Y\geq2$ the function $\varphi_\alpha$ is locally upper bounded near the points of $U_\alpha\cap Y$, hence it extends to a psh function on $U_\alpha$. The second conclusion follows  since the current $c_1(L,h^L)$ does not charge $Y$ by Federer's support theorem (\cite{Fed69}, see also \cite[Theorem 1.7]{Har77}). 
\end{proof} 

\par We denote the extended metric still by $h_\epsilon$ and we let $\omega=c_1(L,h_\varepsilon)$, so $\omega$ is a positive closed $(1,1)$ current on $X$. 

\begin{Proposition}\label{P:heps} (i) We have $ \omega=\gamma+\pi_\star(\theta'+\varepsilon dd^cF)$, 
where $F$ is defined in \eqref{e:F} and $\theta'$ is a smooth real closed $(1,1)$ form on $\widetilde X$.

\par (ii) Let $A$ be an irreducible component of $\Sigma$ of dimension $n-1$.  Then the generic Lelong numbers $\nu(\gamma,A)=\nu(\omega,A)$. Moreover, any section in $H^0_{(2)}(X\setminus\Sigma,L^p,h_\varepsilon,f\Omega^n)$ vanishes at least to order $p\nu(\omega,A)$ on $A$.
\end{Proposition}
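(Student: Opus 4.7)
The plan is to unpack the construction of $h_\varepsilon$ recalled in Section \ref{SSS:heps} and then carry out a short local analysis near a generic point of $A$.

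For (i), I would start from the curvature identity $c_1(L',h_\varepsilon^{L'})=\pi^\star\gamma+\theta'+\varepsilon dd^cF$ on $\widetilde X$, which is immediate from the construction of $h_\varepsilon^{L'}$ in Section \ref{SSS:heps}. Since $\pi\colon\widetilde X\setminus E\to X\setminus Y$ is a biholomorphism and $\pi_\star$ coincides there with $(\pi^{-1})^\star$, pushing this identity forward yields $\omega=\gamma+\pi_\star(\theta'+\varepsilon dd^cF)$ on $X\setminus Y$. To propagate the identity across $Y$, I would argue that all three summands are currents of order $0$ on $X$: $\gamma$ and $\omega$ by their very definition, $\pi_\star\theta'$ as the push-forward of a smooth form by a proper map, and $\pi_\star(dd^cF)$ as the push-forward of a closed current of order $0$. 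Hence $T:=\omega-\gamma-\pi_\star(\theta'+\varepsilon dd^cF)$ is a closed $(1,1)$ current of order $0$ supported in the analytic set $Y$, of real dimension $\leq 2n-4$, and Federer's support theorem applied to the real and imaginary parts of $T$ forces $T=0$.

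For the Lelong-number equality in (ii), fix a generic regular point $x_0\in A$ lying off $Y$ and off the other components of $\Sigma$, and choose local coordinates $(z_1,\dots,z_n)$ at $x_0$ with $A=\{z_1=0\}$. By Siu's decomposition a psh weight of $h$ on a trivializing neighborhood $U_\alpha$ of $x_0$ splits as $\varphi_\alpha=c_A\log|z_1|+\psi$, where $c_A=\nu(\gamma,A)$ and $\psi$ is a psh function with $\nu(\psi,A)=0$. Via $\pi^{-1}$ the weight of $h_\varepsilon$ on $U_\alpha\setminus\Sigma$ takes the form $\varphi_\alpha+\chi'+\varepsilon F$, where $\chi'$ is smooth and $F\sim-\tfrac12\log(-\log|z_1|)$ modulo bounded terms (arising from the defining section $\sigma_{j_0}$ of the component of $\pi^{-1}(A)$ through a preimage of $x_0$). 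Neither the smooth term $\chi'$ nor the $-\log\log$-type singularity of $F$ contributes to the Lelong number along $A$, so $\nu(\omega,A)=c_A=\nu(\gamma,A)$.

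The vanishing-order claim in (ii) will follow from $L^2$-integrability combined with the blow-up of the Poincar\'e volume form near $A$. Writing $S=se_\alpha^{\otimes p}$ and letting $m$ be the vanishing order of $s$ along $A$, the asymptotic $f\Omega^n\sim(|z_1|\log|z_1|)^{-2}d\lambda$ from Section \ref{SSS:Theta} yields
\begin{equation*}
|S|^2_{h_\varepsilon^p}\,f\Omega^n\,\asymp\,|z_1|^{2m-2pc_A-2}(-\log|z_1|)^{p\varepsilon-2}\,d\lambda(z)
\end{equation*}
up to a bounded factor. Slicing in $z_1$ and passing to polar coordinates, finiteness of the local $L^2$-norm forces $\int_0 r^{2m-2pc_A-1}(\log(1/r))^{p\varepsilon-2}\,dr$ to converge at the origin, which gives $2m-2pc_A-1\geq-1$, i.e. $m\geq pc_A=p\nu(\omega,A)$; the substitution $u=\log(1/r)$ shows moreover that the equality case $m=pc_A\in\mathbb Z$ is ruled out once $p\varepsilon\geq 1$.

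The main obstacle I anticipate is the extension step in (i): one must exclude any concentrated mass of $\pi_\star(\theta'+\varepsilon dd^cF)$ along $Y$, which seems to require the version of Federer's support theorem for closed currents of order $0$ rather than only for positive ones. The computation in (ii) is then essentially mechanical; the small subtlety is that the factor $(\log|z_1|)^{-2}$ sitting inside the Poincar\'e volume form — and not the $-\log\log$ singularity of the weight itself — is exactly what promotes the naive threshold $m>pc_A-1$ to the sharp bound $m\geq pc_A$.
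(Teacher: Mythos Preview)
Your argument for (i) and for the Lelong-number equality in (ii) is essentially the paper's. One cosmetic difference: the paper avoids the issue you flag about Federer's theorem for non-positive currents by applying it separately to the \emph{positive} closed currents $\pi_\star\gamma'_\varepsilon$ and $\pi_\star\pi^\star\gamma$, each of which agrees with $\omega$, respectively $\gamma$, outside $Y$ and hence everywhere. Subtracting afterwards gives the stated formula without ever invoking the support theorem for a signed current.

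For the vanishing-order part of (ii) your route genuinely differs from the paper's. The paper uses only the crude lower bound $f\geq c_x>0$ from assumption (C) together with the Siu decomposition of $dd^c\varphi_\varepsilon$ to obtain $\varphi_\varepsilon\leq\nu\log|z_1|+O(1)$, whence $\int_{V_x}|s|^2|z_1|^{-2p\nu}\,d\lambda<\infty$, which already forces the claimed vanishing. It never exploits the Poincar\'e blow-up $f\Omega^n\sim(|z_1|\log|z_1|)^{-2}\,d\lambda$ nor the $-\log\log$ singularity of the $h_\varepsilon$-weight. Your computation is correct in the direction that matters, but two remarks are in order: your displayed $\asymp$ should be a $\gtrsim$, since the psh remainder $\psi$ in the Siu splitting of $\varphi_\alpha$ is only bounded \emph{above} on a compact (it may fail to be bounded below along $A$ even though $h$ is continuous off $\Sigma$), and that inequality is the one you actually use. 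Second, your closing remark that the factor $(\log|z_1|)^{-2}$ in the Poincar\'e volume ``promotes'' the threshold is not the mechanism in the paper's proof; the paper reaches the same conclusion $m\geq p\nu$ from $\int|s|^2|z_1|^{-2p\nu}d\lambda<\infty$ alone, so that extra structure buys you a marginal sharpening (the exclusion of the borderline $m=pc_A$ for $p\varepsilon\geq1$) but is not needed for the proposition as stated.
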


\begin{proof} $(i)$ Recall from Section \ref{SSS:heps} that the metric $h_\varepsilon$ on $L\mid_{_{X\setminus Y}}$ was induced via the biholomorphism $\pi:\widetilde X\setminus E\longrightarrow X\setminus Y$ by a metric $h_\varepsilon^{L'}$ on $L'=\pi^\star\left(L\mid_{_{X\setminus Y}}\right)$ with curvature current $\gamma'_\varepsilon=\pi^\star\gamma+\theta'+\varepsilon dd^cF$. The map $\pi:\widetilde X\longrightarrow X$ is proper so $\pi_\star\gamma'_\varepsilon$ is a well defined positive closed (1,1) current on $X$ which satisfies $\pi_\star\gamma'_\varepsilon=\omega$ on $X\setminus Y$. As $\dim Y\leq n-2$, Federer's support theorem \cite{Fed69} implies that $\pi_\star\gamma'_\varepsilon=\omega$ on $X$. Similarly, we have that $\pi_\star\pi^\star\gamma=\gamma$ on $X\setminus Y$, and hence on $X$. The formula for $\omega$ in the statement now follows. 

\medskip

\par $(ii)$ Fix a point $x\in A\setminus Y$. Then $x\in\Sigma^{n-1}_{reg}$, so we can find a neighborhood $V_x\subset X$ of $x$ and local coordinates $z_1,\dots,z_n$ on $V_x$ so that $\pi:\pi^{-1}(V_x)\longrightarrow V_x$ is a biholomorphism, $x=0$, $\Sigma\cap V_x=A\cap V_x=\{z_1=0\}$, and $f\geq c>0$ on $V_x$. 

\par By $(i)$ we have 
$$\omega=\gamma+(\pi^{-1})^\star\theta'+\varepsilon dd^cF\circ\pi^{-1} \text{ on } V_x.$$ 
We can assume that there exist functions $\varphi,u$ on $V_x$ so that $\varphi$ is psh, $u$ is smooth, $dd^c\varphi=\gamma$, $dd^cu=(\pi^{-1})^\star\theta'$. Then the function $\varphi_\varepsilon=\varphi+u+\varepsilon F\circ\pi^{-1}$ is psh on $V_x$ and $dd^c\varphi_\varepsilon=\omega$. It follows by the definition \eqref{e:F} of $F$ that near $x$ we have $F\circ\pi^{-1}=-\log(g-\log|z_1|)+O(1)$, where $g$ is a smooth function. Thus 
$$\varphi_\varepsilon=\varphi-\log(g-\log|z_1|)+O(1),$$  
which shows that the Lelong numbers $\nu(\varphi_\varepsilon,x)=\nu(\varphi,x)$. Since $x\in A\setminus Y$ was arbitrary this implies that $\nu(\omega_\varepsilon,A)=\nu(\omega,A)$. 

\par Next, let $S\in H^0_{(2)}(X\setminus\Sigma,L^p,h_\varepsilon,f\Omega^n)$ be defined on $V_x$ by $S=se_\alpha^{\otimes p}$, where $e_\alpha$ is a local frame for $L$, and let $\nu=\nu(\omega,A)$. As $f\geq c$ we have 
$$\int_{V_x\setminus A}|s|^2e^{-2p\varphi_\varepsilon}\,d\lambda<\infty,$$ 
where $\lambda$ is the Lebesgue measure in coordinates. By the results of \cite{Siu74}, $dd^c\varphi_\varepsilon=\nu dd^c\log|z_1|+T$, where $T$ is a positive closed current, so $T=dd^cv$ for some psh function $v$ on $V_x$. It follows that the function $\varphi_\varepsilon-\nu\log|z_1|-v$ is pluriharmonic on $V_x$. Hence, by shrinking $V_x$ if necessary, we have 
$$\varphi_\varepsilon\leq\nu\log|z_1|+O(1), \text{ hence } \int_{V_x\setminus A}|s|^2|z_1|^{-2p\nu}\,d\lambda<\infty.$$ 
This implies that $s$ vanishes at least to order $p\nu$ along $A$.
\end{proof}

\begin{Remark} The proof of Proposition \ref{P:heps} shows in fact that the currents $\omega$ and $\gamma$ have the same Lelong numbers at each point of $\Sigma_{reg}^{n-1}$. However, the Lelong numbers of $\omega$ are bigger than those of $\gamma$ at other points of $\Sigma$. For instance, if $\Sigma$ is a finite set then $\widetilde X$ is simply the blow up of $X$ at each of the points of $\Sigma$. Then, in local coordinates $z$ near a point $x=0\in\Sigma$, we have $\pi_\star\theta'= a\,dd^c\log\|z\|$, for some $a>0$.
\end{Remark}

\subsection{Singular metrics on big line bundles}\label{SS:sm} Let $L$ be a big line bundle over the compact complex manifold $X$. Then $X$ is Moishezon and $L$ admits a singular metric $h$, smooth outside a proper analytic subset $\Sigma$ of $X$, and with strictly positive curvature current $\gamma=c_1(L,h)$ (see e.\,g.\ \cite[Lemma 2.3.6]{MM07}).

\subsubsection{Special metrics on Moishezon manifolds} Let $\Theta$ be the Poincar\'e metric on $X\setminus\Sigma$ and $h_\varepsilon$ be the small perturbation of the metric $h$ on $L$ constructed in Section \ref{SS:metrics}. It is shown in \cite{MM04,MM08} (see also \cite[Chapter 6]{MM07}) that the Bergman kernel function $P_p$ of the space $H^0_{(2)}(X\setminus\Sigma,L^p,h_\varepsilon,\Theta^n)$ has a full asymptotic expansion locally uniformly on $X\setminus\Sigma$. This clearly implies \eqref{e:mainhyp}, so we have the following:

\begin{Theorem} The conclusions of Theorems \ref{T:mt}, \ref{T:SZ} and \ref{T:rs} hold for the spaces $H^0_{(2)}(X\setminus\Sigma,L^p,h_\varepsilon,\Theta^n)$ and for $\omega=c_1(L,h_\varepsilon)$. 
\end{Theorem}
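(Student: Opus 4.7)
The plan is to invoke Theorems \ref{T:mt}, \ref{T:SZ} and \ref{T:rs} directly with the data $(X,\Sigma,(L,h_\varepsilon),f,\Omega)$, where $\Omega$ is any smooth positive $(1,1)$ form on $X$ and $f=\Theta^n/\Omega^n$. The whole argument then reduces to checking assumptions (A)--(C) together with the Bergman kernel hypothesis \eqref{e:mainhyp}.

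Verification of (A)--(C) is mostly bookkeeping. (A) is clear since $X$ is compact Moishezon and $\Sigma$ is analytic. For (B), the construction in Section \ref{SSS:heps} gives $h_\varepsilon$ continuous on $X\setminus\Sigma$ with strictly positive curvature, and Lemma \ref{L:hext} extends it to a singular positively curved metric across the codimension-$\geq 2$ set $Y\subset\Sigma$, so the extended curvature current $\omega$ is a well-defined positive closed $(1,1)$ current on $X$. Condition (C) requires $f\geq c_x>0$ near every point of $(X\setminus\Sigma)\cup\Sigma^{n-1}_{reg}$: on $X\setminus\Sigma$ both $\Theta$ and $\Omega$ are smooth and positive, so $f$ is strictly positive and continuous there; near $x\in\Sigma^{n-1}_{reg}$ the local equivalence $\Theta^n\sim(|z_1|\log|z_1|)^{-2}\,d\lambda$ recalled in Section \ref{SSS:Theta} forces $f\to+\infty$, making (C) trivial at such points.

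For \eqref{e:mainhyp} I would simply import the full asymptotic expansion
$$P_p(x)=b_0(x)p^n+b_1(x)p^{n-1}+O(p^{n-2}),\quad b_0>0,$$
valid locally uniformly on $X\setminus\Sigma$, which is established for precisely the spaces $H^0_{(2)}(X\setminus\Sigma,L^p,h_\varepsilon,\Theta^n)$ in \cite{MM04,MM08} (see also \cite[Chapter 6]{MM07}). Taking $\log$ and dividing by $p$ yields $\frac{1}{p}\log P_p\to 0$ locally uniformly on $X\setminus\Sigma$, which is exactly \eqref{e:mainhyp}.

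With (A)--(C) and \eqref{e:mainhyp} in place, Theorem \ref{T:mt} gives $\frac{1}{p}\gamma_p\to\omega$ weakly on $X$ and $\frac{1}{p^k}\gamma_p^k\to\omega^k$ whenever $\codim\Sigma\geq k$, Theorem \ref{T:SZ} produces the expectation formula and the convergence $\frac{1}{p^k}E_p^k[\sigma=0]\to\omega^k$, and Theorem \ref{T:rs} yields almost-sure equidistribution of random zero divisors. There is no real obstacle: the heavy lifting sits in Theorem \ref{T:mt} and in the earlier construction of $\Theta$ and $h_\varepsilon$, while the Bergman-kernel asymptotics is used as a black box; the only mildly subtle step is making sure $h_\varepsilon$ is correctly extended to a global metric on $L$ over $X$ via Lemma \ref{L:hext}, so that $\omega$ makes sense as a positive closed current on the whole manifold rather than only on $X\setminus Y$.
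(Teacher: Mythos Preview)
Your proposal is correct and follows essentially the same route as the paper: verify assumptions (A)--(C) for the data $(X,\Sigma,(L,h_\varepsilon),\Theta^n)$ using the constructions of Sections \ref{SSS:Theta}--\ref{SSS:heps} and Lemma \ref{L:hext}, then import the full asymptotic expansion of $P_p$ from \cite{MM04,MM08} to obtain \eqref{e:mainhyp}, and finally invoke Theorems \ref{T:mt}, \ref{T:SZ}, \ref{T:rs}. The paper additionally sketches a separate, self-contained argument for the case $k=1$ (the weak convergence $\frac{1}{p}\gamma_p\to\omega$) exploiting compactness of $X$, a Gauduchon form, Siu's decomposition, and the Lelong-number comparison of Proposition \ref{P:heps}; this alternative avoids appealing to Theorem \ref{T:mt} for $k=1$ but is presented as a complement rather than the main proof.
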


\par Note that in this case $X$ is not assumed to be K\"ahler.

\smallskip

\par Let us give here a simple alternate proof of the convergence $\frac{1}{p}\,\gamma_p\to\omega$, taking advantage of the fact that $X$ is compact. Let $Z_1=A_1\cup\ldots\cup A_l$, where $A_j$ are the irreducible components of $\Sigma$ of dimension $n-1$, and let $\eta$ be a Gauduchon form on $X$, i.e. a smooth positive (1,1) form with $dd^c(\eta^{n-1})=0$ \cite{Gau77}. Then by Lemma \ref{L:gammap}, 
$$\int_Xdd^c\log P_p\wedge\eta^{n-1}=0, \text{ so } \int_X\frac{1}{p}\,\gamma_p\wedge\eta^{n-1}=\int_X\omega\wedge\eta^{n-1}.$$ 
By Proposition \ref{P:heps}, the generic Lelong number of $\gamma_p/p$ on $A_j$ is at least $\nu_j=\nu(\omega,A_j)$. Siu's decomposition theorem (\cite{Siu74}, see also \cite{D93}) implies that 
$$\frac{1}{p}\,\gamma_p=R_p+\sum_{j=1}^l\nu_j[A_j]\,,\;\omega=R+\sum_{j=1}^l\nu_j[A_j],$$
where $[A_j]$ denotes the current of integration on $A_j$, $R_p,R$ are positive closed currents on $X$, and $R$ does not charge $Z_1$ (i.e. the trace measure of $R$ is 0 on $Z_1$). We have 
$$\int_XR_p\wedge\eta^{n-1}=\int_XR\wedge\eta^{n-1},$$ 
so the sequence of currents $\{R_p\}$ has uniformly bounded mass on $X$. It suffices to show that any limit point $T$ of this sequence is equal to $R$. By \eqref{e:mainhyp}, $T=R$ on $X\setminus\Sigma$. Hence by the support theorem $T=R$ on $X\setminus Z_1$, as $\Sigma=Z_1\cup Z_2$ and $\dim Z_2\leq n-2$. Since $T\geq0$ and $R$ does not charge $Z_1$ it follows that  $T\geq R$. 
But $T$ and $R$ have the same mass, so $T=R$. $\qed$

\subsubsection{Multiplier ideal sheaves} We recall first the notion of multiplier ideal sheaf. Let $\varphi\in L^1(X,\mathbb R)$. The {\em Nadel multiplier ideal sheaf} $\cI(\varphi)$ is the ideal subsheaf of germs of holomorphic functions $f\in\cO_{X,x}$ such that $|f|^2e^{-2\varphi}$ is integrable with respect to the Lebesgue measure in local coordinates near $x$.

\par If $h'$ is a  smooth Hermitian metric on $L$ then $h=h'e^{-2\varphi}$ for some function $\varphi\in L^1(X,\mathbb R)$. The Nadel multiplier ideal sheaf of $h$ is defined by $\cI(h)=\cI(\varphi)$; the definition does not depend on the choice of $h'$. The space of global sections in the sheaf $L\otimes\cI(h)$ is given by
\begin{equation}\label{l2:mult}
H^0(X,L\otimes\cI(h))=\Big\{
s\in H^0(X,L)\,:\,\int_X\big\lvert s\big\rvert^2_h\,\Omega^n=\int_X\big\lvert s\big\rvert^2_{h'}\,e^{-2\varphi}\,\Omega^n<\infty
\Big\}\,,
\end{equation}
where $\Omega$ is a fixed smooth positive $(1,1)$ form on $X$. We have 
$$H^0(X,L^p\otimes\cI(h_p))=H^0_{(2)}(X\setminus\Sigma,L^p,h,\Omega^n),$$ 
where $h_p$ is the metric induced by $h$ on $L^p$. If $\{S^p_j\}$ is an orthonormal basis of $H^0(X,L^p\otimes\cI(h_p))$ we define the Fubini-Study currents $\gamma_p$ on $X$ as in \eqref{e:gammap}.

\begin{Theorem} Let $L$ be a big line bundle over a compact K\"ahler manifold $X$ and $h$ be a singular Hermitian metric on $L$, smooth outside a proper analytic subset $\Sigma$ of $X$, and with strictly positive curvature current $\gamma=c_1(L,h)$. The conclusions of Theorems \ref{T:mt}, \ref{T:SZ} and \ref{T:rs} hold for the spaces $H^0(X,L^p\otimes\cI(h_p))$ and for $\gamma$.
\end{Theorem}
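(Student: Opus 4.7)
The strategy is to realize this result as a direct application of Theorems \ref{T:Bkf}, \ref{T:mt}, \ref{T:SZ} and \ref{T:rs}; no new analytic input is required, only a verification of hypotheses and an identification of function spaces.

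First, I would verify that the data $(X,\Sigma,(L,h),f\equiv1,\Omega)$ satisfies assumptions (A)--(C) from the introduction: (A) holds because $X$ is compact K\"ahler and $\Sigma\subset X$ is a proper analytic subvariety, hence compact; (B) is given; (C) is trivial with $f\equiv 1$ since the volume form on $X\setminus\Sigma$ is simply $\Omega^n$.

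Next, I would establish the identification
\[
H^0(X,L^p\otimes\cI(h_p))=H^0_{(2)}(X\setminus\Sigma,L^p,h,\Omega^n),
\]
together with equality of the associated inner products. The inclusion ``$\subset$'' is immediate from \eqref{l2:mult} by restricting sections to $X\setminus\Sigma$ (noting that $\Sigma$ is $\Omega^n$-negligible, so the two $L^2$-norms coincide). For ``$\supset$'', Lemma \ref{L:gammap}(i), which applies in this setting since (C) holds, shows that any $S\in H^0_{(2)}(X\setminus\Sigma,L^p,h,\Omega^n)$ extends to a global holomorphic section of $L^p$ on $X$; the $L^2$-integrability condition of \eqref{l2:mult} is then automatic. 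Consequently, the Fubini-Study currents $\gamma_p$ defined via \eqref{e:gammap} from an orthonormal basis of $H^0(X,L^p\otimes\cI(h_p))$ are exactly the Fubini-Study currents considered in Theorem \ref{T:mt}.

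With these preliminaries, I would invoke Theorem \ref{T:Bkf} (whose hypotheses $X$ compact K\"ahler, $\Omega$ K\"ahler, $\gamma$ strictly positive, are all in force) to conclude that the Bergman kernel function $P_p$ of $H^0_{(2)}(X\setminus\Sigma,L^p,h,\Omega^n)$ satisfies \eqref{e:mainhyp}. Theorem \ref{T:mt} then yields $\frac{1}{p}\gamma_p\to\gamma$ weakly on $X$, and, under the additional assumption $\dim\Sigma\leq n-k$ for some $2\leq k\leq n$, also $\frac{1}{p^k}\gamma_p^k\to\gamma^k$ weakly on $X$. Since $X$ is compact, Theorems \ref{T:SZ} and \ref{T:rs} apply verbatim and deliver the expectation formula $E_p^k[\sigma=0]=\gamma_p^k$, its normalized limit $\gamma^k$, and the $\lambda_\infty$-almost sure equidistribution $\frac{1}{p}[\sigma_p=0]\to\gamma$. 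The only point requiring genuine care is the identification of the two spaces of sections, which rests on the Skoda-type extension principle encoded in Lemma \ref{L:gammap}(i); everything else is a direct quotation of earlier theorems.
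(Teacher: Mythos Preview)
Your proposal is correct and follows essentially the same approach as the paper: verify (A)--(C) with $f\equiv1$, identify $H^0(X,L^p\otimes\cI(h_p))$ with $H^0_{(2)}(X\setminus\Sigma,L^p,h,\Omega^n)$, invoke Theorem~\ref{T:Bkf} to obtain \eqref{e:mainhyp}, and then apply Theorems~\ref{T:mt}, \ref{T:SZ}, \ref{T:rs}. The paper states the identification of spaces just before the theorem and then gives a three-line proof; your write-up supplies more detail on that identification (via Lemma~\ref{L:gammap}(i)), but the argument is the same.
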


\begin{proof}
Conditions (A)\,-(C) are obviously verified in the present situation. Moreover, \eqref{e:mainhyp} follows from Theorem \ref{T:Bkf}. It can also be seen as a consequence of the full asymptotic expansion of the Bergman kernel function proved in \cite{HsM11}. Therefore, Theorem \ref{T:mt} implies the desired conclusion.
\end{proof}

\par Note that $X$ is in fact a projective manifold, since it is Moishezon and K\"ahler (see e.\,g.\ \cite[Th.\,2.2.26]{MM07}).

\subsection{Zariski-open manifolds with bounded negative Ricci curvature}\label{SS:qp}
Let $(M,J,\omega)$ be a K\"ahler manifold, let $g^{TM}$ be the Riemannian metric associated to $\omega$ by $g^{TM}(Ju,Jv)=  g^{TM}(u,v)$ for all $u,v\in T_xM$, $x\in M$. Let $\ric$ be the Ricci curvature of $g^{TM}$. The Ricci form $\ric_\omega$ is defined as the $(1,1)$-form associated to $\ric$ by
$$\ric_\omega(u,v)=\ric(Ju,v)\,,\quad\text{for any $u,v\in T_{x}M$, $x\in M$}.$$
The volume form $\omega^n$ induces a metric $h^{K^*_M}$ on $K^*_M$, whose dual metric on $K_M$ is denoted by $h^{K_M}$.
Since the metric $g^{TM}$ is K\"ahler, we have (see e.\,g.\ \cite[Prob.\,1.7]{MM07})
$$\ric_\omega=iR^{K_{M}^{*}}=-iR^{K_{M}}\,.$$
We denote by $H^0_{(2)}(M,K_M^p)$ the space of $L^2$-pluricanonical sections with respect to the metric $h^{K^p_M}$ and the volume form $\omega^n$.

\smallskip

\par We consider in this section the following setting:

\medskip

\par (I) $X$ is a compact complex manifold of dimension $n$, $\Sigma$ is an analytic subvariety of $X$, $M:=X\setminus\Sigma$. 

\smallskip

\par (II) $M$ admits a complete K\"ahler metric $\omega$ such that $\ric_\omega\leq-\lambda\omega$, for some constant $\lambda>0$.

\medskip

\par Note that $K_M=K_X\mid_{_\Sigma}$. Moreover, condition (II) implies that the volume form $\omega^n$ is integrable over $X$; indeed, by Yau's Schwarz lemma \cite[Theorem 3]{Ya:78} it follows that $\omega^n\lesssim\Theta^n$, where $\Theta$ is the generalized Poincar\'e metric on $M$ (see e.\,g.\ \cite[Prop. 1.10]{Nad:90}) and $\Theta^n$ is integrable over $X$. We have the following:

\begin{Theorem}\label{T:qpc2}
\par Let $X,\,\Sigma,\,M,\,\omega$ be as in (I), (II), and assume that $\dim\Sigma\leq n-k$, $k\geq2$. Then the following hold:

\par (i) $H^0_{(2)}(M,K_M^p)\subset H^0(X,K_X^p)$.

\par (ii) The currents $(-\ric_\omega)^j,\,\gamma_p^j$, $1\leq j\leq k$, are well defined on $X$ for $p$ sufficiently large, where $\gamma_p$ are the Fubini-Study currents defined by \eqref{e:gammap} for $H^0_{(2)}(M,K_M^p)$.

\par (iii) $\frac{1}{p^j}\gamma^j_p\to(-\frac{1}{2\pi}\ric_\omega)^j$  weakly on $X$ as $p\to\infty$, for $1\leq j\leq k$.
\end{Theorem}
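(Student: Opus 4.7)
The plan is to apply Theorem \ref{T:mt} in its variant form given by Remark \ref{R:mtalt}, taking $L = K_X$, equipping it with the singular Hermitian metric $h$ obtained by extending the Ricci-induced metric $h^{K_M}$ across $\Sigma$, and choosing $f\Omega^n = \omega^n$ for a fixed smooth positive volume form $\Omega^n$ on $X$. The curvature current $\gamma = c_1(K_X,h)$ coincides with $-\frac{1}{2\pi}\ric_\omega$ on $M$.

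First I would verify the hypotheses of Remark \ref{R:mtalt}. On $M$, by (II),
$c_1(K_M,h^{K_M}) = -\tfrac{1}{2\pi}\ric_\omega \geq \tfrac{\lambda}{2\pi}\omega > 0$, so $h^{K_M}$ is a continuous positively curved singular Hermitian metric on $K_M$. Since $\codim \Sigma \geq k \geq 2$, Lemma \ref{L:hext} yields a positively curved singular extension $h$ of $h^{K_M}$ to all of $K_X$ that is continuous on $M$; this gives (A) and (B). Condition (C') is automatic because $f = \omega^n/\Omega^n$ is smooth and positive on $M$. For (D), which simultaneously proves (i): any $s \in H^0(M,K_M^p)$ is locally of the form $g\cdot(dz_1\wedge\cdots\wedge dz_n)^{\otimes p}$ with $g \in \cO(U\setminus\Sigma)$, and since $\codim\Sigma\geq 2$ the coefficient $g$ extends holomorphically across $\Sigma$ by the classical Riemann extension theorem, so $s \in H^0(X,K_X^p)$.

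The crux of the argument---and the step I expect to be the main obstacle---is verifying hypothesis \eqref{e:mainhyp} for the Bergman kernel $P_p$ of $H^0_{(2)}(M,K_M^p)$. For this I would invoke the full asymptotic expansion of the Bergman kernel on complete K\"ahler manifolds with bounded negative Ricci curvature established in \cite{MM04,MM07,MM08}: under assumption (II) one has $P_p(x) = b_0(x)p^n + O(p^{n-1})$ with $b_0 > 0$, locally uniformly on $M$. This is precisely the form of Tian's theorem \cite[Theorem~C]{Ti90} extended to complete K\"ahler manifolds, and it is strong enough to imply, after taking logarithms and dividing by $p$, the weaker hypothesis \eqref{e:mainhyp}.

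With all hypotheses of Theorem \ref{T:mt} (in the form of Remark \ref{R:mtalt}) in place, the theorem together with Lemma \ref{L:gammapk} delivers the remaining conclusions. Namely, the currents $\gamma^j$ and $\gamma_p^j$ are well defined on $X$ (respectively on each relatively compact neighborhood of $\Sigma$, for all $p$ sufficiently large) for every $1 \leq j \leq k$, since the local psh potentials of $\gamma$ are locally bounded outside $\Sigma$ and $\codim\Sigma \geq k$; this proves (ii). Finally, $\frac{1}{p^j}\gamma_p^j \to \gamma^j = (-\tfrac{1}{2\pi}\ric_\omega)^j$ weakly on $X$ as $p\to\infty$ for each $1 \leq j \leq k$, the intermediate cases $j < k$ following by applying Theorem \ref{T:mt} with $k$ replaced by $j$ (or alternatively from Remark \ref{R:logPp}), which is (iii).
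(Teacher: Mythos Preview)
Your proof is correct and follows essentially the same route as the paper: extend $h^{K_M}$ across $\Sigma$ via Lemma \ref{L:hext} (using $\codim\Sigma\geq2$), invoke the Bergman kernel asymptotics on complete K\"ahler manifolds to get \eqref{e:mainhyp}, and apply Theorem \ref{T:mt}. Two minor remarks: (1) the detour through Remark \ref{R:mtalt} is unnecessary, since $\dim\Sigma\leq n-2$ implies $\Sigma^{n-1}_{reg}=\emptyset$, so condition (C) reduces to (C') and the original Theorem \ref{T:mt} applies directly; (2) the precise reference for \eqref{e:mainhyp} is Tian \cite[Theorem 4.1]{Ti90} (or \cite[Th.\,6.1.1]{MM07}), not \cite[Theorem C]{Ti90}, which concerns extension of currents rather than Bergman kernel asymptotics.
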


\begin{proof} We only have to check condition (B). Since $\codim\Sigma\geq2$, Lemma \ref{L:hext} implies that the metric $h^{K_M}$ extends to a positively curved (singular) metric on $K_X$ over $X$, which we denote by $h$. Moreover, 
$$-\ric_\omega=iR^{K_{M}}=2\pi c_1(K_M,h^{K_M})=2\pi c_1(K_X,h)\mid_{_M}$$ 
extends to a positive closed current on $X$. 

\par Condition \eqref{e:mainhyp} holds, as shown by Tian \cite[Theorem 4.1]{Ti90} (this follows also from the more general result in \cite[Th.\,6.1.1]{MM07}). Therefore, Theorem \ref{T:mt} implies the desired conclusion.
\end{proof}

\par Note that Tian \cite[\S5]{Ti90} considered the situation when $X,\,\Sigma,\,M$ verify assumptions (I), (II), $X$ is projective and $k=1$. In that case he shows that the sections of $H^0_{(2)}(M,K_M^p)$ extend meromorphically to $X$, with poles of order at most $p-1$ along $\Sigma$, and $-\ric_\omega$ extends to a positive closed current on $X$ \cite[Theorem C]{Ti90}.

\par This situation is more difficult, as the metric $h^{K_M}$ does not extend to a positively curved metric on $K_X$. Nevertheless, we shall now show how this case fits into our framework from Theorem \ref{T:mt}. In view of Theorem \ref{T:qpc2} (and its proof), we may assume without loss of generality that 

\medskip

\par (III) $\Sigma$ has pure dimension $n-1$. 

\medskip

\par For this purpose, consider the line bundle $L:=K_X\otimes\cO_{X}(\Sigma)$, where $\cO_{X}(\Sigma)$ is the line bundle associated to the divisor $\Sigma$. Let $\sigma$ be the canonical section of $\cO_{X}(\Sigma)$ (cf.\ \cite[p.\,71]{MM07}) and denote by $h_\sigma$ the metric induced by $\sigma$ on $\cO_{X}(\Sigma)$ (cf.\ \cite[Example\,2.3.4]{MM07}). Note also that $c_1(\cO_X(\Sigma),h_\sigma)=[\Sigma]$ by \cite[(2.3.8)]{MM07}. Consider the metric naturally defined by $h^{K_M}$,
\begin{equation}\label{e:hsigma}
h_{M,\sigma}:=h^{K_M}\otimes h_\sigma\;\,\text{on}\;\,L\mid_{_M}=K_M\otimes\cO_X(\Sigma)\mid_{_M}\cong K_M.
\end{equation}

\smallskip

\par We recall the following simple fact, whose proof is left to the reader.

\begin{Lemma}\label{L:isomorph} Let $X,\,\Sigma,\,M$ verify assumptions (I) and (III). Assume that $(E,h^E)$ is a singular Hermitian line bundle on $X$ and $p\geq1$. Then 
$$\mathcal{I}_\sigma:H^0(M,E\mid_{_M})\longrightarrow H^0(M,(E\otimes\cO_X(\Sigma)^p)\mid_{_M}),\;\mathcal{I}_\sigma(S)=S\otimes\sigma^{\otimes p},$$
is an isomorphism and we have $|S|^2_{h^E}=|\mathcal{I}_\sigma(S)|^2_{h^E\otimes h_\sigma^p}$ pointwise on $M$, where $h_\sigma^p$ is the metric induced by $h_\sigma$ on $\cO_X(\Sigma)^p$. 
\end{Lemma}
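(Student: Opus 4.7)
The plan is straightforward once one recognizes that the canonical section $\sigma$ of $\cO_X(\Sigma)$ vanishes precisely on $\Sigma$, to order one by assumption (III). Consequently $\sigma\mid_{_M}$ is a nowhere-vanishing holomorphic section of $\cO_X(\Sigma)\mid_{_M}$, and therefore $\sigma^{\otimes p}\mid_{_M}$ is a nowhere-vanishing holomorphic section of $\cO_X(\Sigma)^p\mid_{_M}$. This section furnishes a global holomorphic trivialization of $\cO_X(\Sigma)^p\mid_{_M}$, and all the assertions of the lemma reduce to bookkeeping relative to this trivialization.

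For the isomorphism statement, I would argue as follows. The map $\mathcal{I}_\sigma$ is clearly $\mathbb{C}$-linear. For injectivity, if $S\otimes\sigma^{\otimes p}=0$ as a section over $M$, then at every point $x\in M$ one has $S(x)\otimes\sigma(x)^{\otimes p}=0$ in the fiber $E_x\otimes\cO_X(\Sigma)^p_x$; since $\sigma(x)^{\otimes p}\neq 0$, this forces $S(x)=0$, hence $S\equiv 0$. For surjectivity, given $T\in H^0(M,(E\otimes\cO_X(\Sigma)^p)\mid_{_M})$, define $S\in H^0(M,E\mid_{_M})$ pointwise by $S(x):=T(x)\otimes(\sigma(x)^{\otimes p})^{-1}$, where the inverse is taken in the one-dimensional fiber. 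Holomorphicity of $S$ is local: on a chart where $\cO_X(\Sigma)$ has a holomorphic frame $e$ and $\sigma=ge$ for a holomorphic $g$ nonvanishing on $M$, this division amounts to dividing holomorphic functions by the nonvanishing holomorphic function $g^p$, which is holomorphic on $M$. Then $\mathcal{I}_\sigma(S)=T$ by construction.

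The pointwise norm identity is immediate from the definition of $h_\sigma$. Indeed, the metric $h_\sigma$ on $\cO_X(\Sigma)$ induced by its canonical section is (by \cite[Example 2.3.4]{MM07}) characterized by $|\sigma|_{h_\sigma}\equiv 1$ on $X\setminus\Sigma=M$; this is consistent with the curvature identity $c_1(\cO_X(\Sigma),h_\sigma)=[\Sigma]$ recorded in \cite[(2.3.8)]{MM07}. Consequently $|\sigma^{\otimes p}|_{h_\sigma^p}\equiv 1$ on $M$, and by the multiplicativity of the tensor-product metric,
$$|\mathcal{I}_\sigma(S)|^2_{h^E\otimes h_\sigma^p}=|S\otimes\sigma^{\otimes p}|^2_{h^E\otimes h_\sigma^p}=|S|^2_{h^E}\cdot|\sigma^{\otimes p}|^2_{h_\sigma^p}=|S|^2_{h^E}$$
pointwise on $M$, as asserted.

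There is essentially no obstacle here; the only point requiring any attention is verifying that the inverse of $\mathcal{I}_\sigma$ produces a genuinely holomorphic section on $M$, and this is handled by the local computation above, using crucially that $\sigma$ is nonvanishing on $M$ (so that the division is by a nonvanishing holomorphic function, not merely a meromorphic one).
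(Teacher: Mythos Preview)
Your argument is correct and complete. The paper itself omits the proof of this lemma (it states ``the proof is left to the reader''), so there is no approach to compare against; your verification that $\sigma$ is nowhere-vanishing on $M$ and that $|\sigma|_{h_\sigma}\equiv 1$ on $M$ is exactly the intended routine check.
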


\begin{Lemma}\label{L:htext} Let $X,\,\Sigma,\,M,\,\omega$ verify assumptions (I)-(III). The metric $h_{M,\sigma}$ defined in \eqref{e:hsigma} extends uniquely to a positively curved metric $h$ on $L$ over $X$. The curvature current $c_1(L,h)$ is independent of the choice of $\sigma$ and we have $c_1(L,h)\mid_{_M}=-\frac{1}{2\pi}\ric_\omega$. 
\end{Lemma}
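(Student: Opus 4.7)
The plan is to reduce the problem to a local statement about extending a plurisubharmonic weight across the hypersurface $\Sigma$, and the main work lies in establishing that this weight is locally bounded above near $\Sigma_{\mathrm{reg}}$.

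First I would set up local coordinates. Fix $x \in \Sigma^{n-1}_{reg}$ and coordinates $(z_1,\dots,z_n)$ on a neighborhood $U$ of $x$ with $\Sigma \cap U = \{z_1 = 0\}$. Choose the local frame $e_{K_X} = dz_1\wedge\ldots\wedge dz_n$ of $K_X$, a local frame $e_{\mathcal{O}(\Sigma)}$ of $\mathcal{O}_X(\Sigma)$ with $\sigma = z_1\,e_{\mathcal{O}(\Sigma)}$ on $U$, and the induced frame $e_L = e_{K_X}\otimes e_{\mathcal{O}(\Sigma)}$ of $L$. Writing $\omega = \tfrac{i}{2} g_{j\bar k}\, dz_j \wedge d\bar z_k$, the standard computations give $h^{K_M}(e_{K_X}, e_{K_X}) = 1/\det(g)$ and $h_\sigma(e_{\mathcal{O}(\Sigma)}, e_{\mathcal{O}(\Sigma)}) = 1/|z_1|^2$, so the local weight of $h_{M,\sigma}$ is
\[
\varphi_L = \tfrac{1}{2}\log\det(g) + \log|z_1| + C
\]
on $U\cap M$. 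On $M$ we have $dd^c\varphi_L = c_1(K_M,h^{K_M}) + c_1(\mathcal{O}_X(\Sigma),h_\sigma)|_M = -\tfrac{1}{2\pi}\ric_\omega$, which is $\geq \tfrac{\lambda}{2\pi}\omega > 0$; in particular $\varphi_L$ is psh on $U\cap M$.

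The crucial step is to show $\varphi_L$ is locally bounded above near $\Sigma_{\mathrm{reg}}$. This is where hypothesis (II) enters: since $\omega$ is complete with $\ric_\omega \leq -\lambda\omega$, Yau's Schwarz lemma yields the comparison $\omega^n \lesssim \Theta^n$ on $M$ (exactly as cited in the proof of Theorem \ref{T:qpc2}, following \cite{Ya:78} and \cite{Nad:90}), where $\Theta$ is the generalized Poincar\'e metric on $M$. Combined with the local asymptotic $\Theta^n \sim (|z_1|\log|z_1|)^{-2}\, d\lambda$ recalled in Section \ref{SSS:Theta}, this gives $\det(g) \leq C'/(|z_1|\log|z_1|)^2$ on $U\cap M$. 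Substituting this estimate into the formula for $\varphi_L$, the $\log|z_1|$ terms cancel and we obtain
\[
\varphi_L \leq C'' - \log\bigl|\log|z_1|\bigr|
\]
near $\Sigma_{\mathrm{reg}}$, which is locally bounded above (and in fact tends to $-\infty$ along $\Sigma_{\mathrm{reg}}$). I expect this cancellation of the $\log|z_1|$ singularity to be the main (and most delicate) point of the proof.

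Being psh on $M$ and locally bounded above near $\Sigma_{\mathrm{reg}}$, the weight $\varphi_L$ admits a unique psh extension across $\Sigma_{\mathrm{reg}}$ via its upper semicontinuous regularization; this defines a positively curved singular metric on $L$ over $X\setminus(\Sigma\setminus\Sigma_{\mathrm{reg}})$. Since $\Sigma\setminus\Sigma_{\mathrm{reg}}$ has codimension $\geq 2$ in $X$, Lemma \ref{L:hext} produces the unique positively curved extension $h$ of $h_{M,\sigma}$ to all of $L$ over $X$. For the remaining assertions: two canonical sections of $\mathcal{O}_X(\Sigma)$ differ by a nowhere vanishing holomorphic function on the compact manifold $X$, hence by a nonzero constant; this only scales $h_\sigma$, and therefore $h$, by a positive constant, so $c_1(L,h)$ is independent of $\sigma$. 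Finally, $c_1(L,h)|_M = -\tfrac{1}{2\pi}\ric_\omega$ is precisely the curvature computation of the first paragraph, since $c_1(\mathcal{O}_X(\Sigma),h_\sigma)|_M = [\Sigma]|_M = 0$.
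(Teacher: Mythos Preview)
Your proof is correct and follows essentially the same approach as the paper's. The only cosmetic difference is that the paper applies Yau's Schwarz lemma locally near a regular point of $\Sigma$, comparing $\omega$ directly with the product of Poincar\'e metrics on $\mathbb{D}^\star\times\mathbb{D}^{n-1}$, whereas you invoke the global comparison $\omega^n\lesssim\Theta^n$ with the generalized Poincar\'e metric (already stated in the paper just before Theorem \ref{T:qpc2}); both routes yield the same key bound $\det(g)\leq C(|z_1|\log|z_1|)^{-2}$ and the ensuing cancellation in $\varphi_L$.
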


\begin{proof} By Lemma \ref{L:hext} it suffices to show that the metric $h_{M,\sigma}$ extends near each regular point $x\in\Sigma$. We follow at first the argument of Tian from \cite[Lemma 5.1]{Ti90} to estimate the volume of $\omega$ as in \cite[(5.3)]{Ti90}. Let $\mathbb{D}$ be the unit disc in $\mathbb{C}$. Then $x\in\Sigma$ has a coordinate neighborhood $U_x$ such that
\begin{equation*} \label{compl12,17}
U_x\cong\mathbb{D}^n,\;\,x=0,\;\,U_x\cap\Sigma\cong\{z=(z_1,\ldots,z_n):\,z_1=0\},\;\,U_x\cap M\cong\mathbb{D}^\star\times\mathbb{D}^{n-1}.
\end{equation*}
Consider the complete hyperbolic metric $g_x$ on $\mathbb{D}^\star\times\mathbb{D}^{n-1}$ given by the product of the Poincar\'e metrics on $\mathbb{D}^\star$ and $\mathbb{D}$. By (II) and Yau's Schwarz lemma \cite[Theorem 3]{Ya:78}, the volume of $\omega$ is dominated on $U_x\cap M$ by a constant multiple of the volume of $g_x$. On a smaller polydisc $\mathbb{D}_r^\star\times\mathbb{D}_r^{n-1}$, $r<1$, the volume of $g_x$ is $\sim(|z_1|\log|z_1|)^{-2}$. It follows that 
$$\det[g_{jk}]\leq C(|z_1|\log|z_1|)^{-2}\;\,{\rm on}\;\,\mathbb{D}_r^\star\times\mathbb{D}_r^{n-1},\;\;{\rm where}\;\,\omega=i\sum_{j,k=1}^ng_{jk}dz_j\wedge d\overline z_k,$$
for some constant $C>0$. 

\par We may assume that there exists a psh weight $\varphi$ of the metric $h_{M,\sigma}$ on $U_x\cap M\cong\mathbb{D}^\star\times\mathbb{D}^{n-1}$. By above,
$$e^{2\varphi}=|z_1|^2\det[g_{jk}]\leq C(\log|z_1|)^{-2}\;\,{\rm on}\;\,\mathbb{D}_r^\star\times\mathbb{D}_r^{n-1},$$
which implies that $\varphi(z)\to-\infty$ as $z\to\Sigma$, so $\varphi$ is upper bounded near $x$. Hence $\varphi$ extends to a psh function on $U_x$, and $h_{M,\sigma}$ extends uniquely to a positively curved metric $h$ on $L$. Moreover, 
$$c_1(L,h)\mid_{_M}=c_1(K_M,h^{K_M})+c_1(\cO_X(\Sigma)\mid_{_M},h_\sigma)=-\frac{1}{2\pi}\ric_\omega.$$ 

\par Since $X$ is compact, any section $\sigma'$ of $\cO_X(\Sigma)$ that vanishes on $\Sigma$ is a constant multiple of $\sigma$, hence the metric $h_{\sigma'}$ is a constant multiple of $h_\sigma$. This shows that $c_1(L,h)$ is independent of the choice of $\sigma$.
\end{proof}

\begin{Theorem}\label{T:qpc1} Let $X,\,\Sigma,\,M,\,\omega$ verify assumptions (I)-(III). Let $h^{K_M}$ be the metric  induced by $\omega$ on $K_M$ and $H^0_{(2)}(M,K_M^p)$ be the space of $L^2$-pluricanonical sections with respect to the metric $h^{K^p_M}$ and the volume form $\omega^n$.
Then we have:

\par (i) The Fubini-Study currents $\gamma_p$ of $H^0_{(2)}(M,K_M^p)$ extend naturally as closed currents of order 0 on $X$ defined locally by formula \eqref{e:gammap}, and $\frac{1}{p}\,\gamma_p+[\Sigma]\geq0$ on $X$. 

\par (ii) $\frac{1}{p}\,\gamma_p+[\Sigma]$ converge weakly on $X$ to a positive closed current $T$ so that $T\mid_{_M}=-\frac{1}{2\pi}\ric_\omega$ and $T=c_1(L,h)$ for a singular Hermitian metric $h$ on $L=K_X\otimes\cO_{X}(\Sigma)$.
\end{Theorem}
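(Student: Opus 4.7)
The strategy is to transfer the problem from $K_M^p$ to the line bundle $L=K_X\otimes\cO_X(\Sigma)$ via $\sigma$, and then apply the variant of Theorem \ref{T:mt} from Remark \ref{R:mtalt} to $(L,h)$ with volume form $\omega^n$ on $M$. By Lemma \ref{L:isomorph}, the multiplication map $\cI_\sigma(S)=S\otimes\sigma^{\otimes p}$ is a pointwise-norm preserving isomorphism from $H^0(M,K_M^p)$ onto $H^0(M,L^p|_M)$ when $L|_M$ is equipped with $h_{M,\sigma}=h^{K_M}\otimes h_\sigma$. By Lemma \ref{L:htext}, $h_{M,\sigma}$ extends uniquely to a positively curved singular metric $h$ on $L$ over $X$ with $c_1(L,h)|_M=-\frac{1}{2\pi}\ric_\omega$. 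Thus $\cI_\sigma$ is an isometric isomorphism $H^0_{(2)}(M,K_M^p)\to H^0_{(2)}(M,L^p,h,\omega^n)$, and the two Bergman kernel functions agree pointwise on $M$.

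Next I verify hypotheses (A), (B), (C'), (D) and \eqref{e:mainhyp} for the data $(X,\Sigma,L,h,\omega^n)$. Condition (A) is witnessed by any Hermitian form $\Omega$ on $X$, (B) is the content of Lemma \ref{L:htext}, and (C') is immediate from the smoothness of $\omega$ on $M$. For \eqref{e:mainhyp}, the identification of Bergman kernels above reduces the claim to Tian's asymptotic \cite[Theorem 4.1]{Ti90} for $H^0_{(2)}(M,K_M^p)$, as already used in Theorem \ref{T:qpc2}. The main technical step is (D): every $T\in H^0_{(2)}(M,L^p,h,\omega^n)$ must extend to a holomorphic section of $L^p$ on $X$. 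Since $\Sigma\setminus\Sigma^{n-1}_{reg}$ has codimension $\geq2$ it suffices to handle a point $x\in\Sigma^{n-1}_{reg}$; in local coordinates where $\Sigma=\{z_1=0\}$, the psh weight of $h$ reads $\varphi=\frac{1}{2}\log\det[g_{jk}]+\log|z_1|$ and $\omega^n=n!\det[g_{jk}]\,d\lambda$. Writing $T=t(e_\alpha)^{\otimes p}$, the hypothesis $\int|t|^2e^{-2p\varphi}\omega^n<\infty$ becomes
\[
\int|t|^2(\det[g_{jk}])^{1-p}|z_1|^{-2p}\,d\lambda<\infty,
\]
and inserting the Yau--Schwarz estimate $\det[g_{jk}]\leq C|z_1|^{-2}(\log|z_1|)^{-2}$ derived in the proof of Lemma \ref{L:htext} yields $\int|t/z_1|^2(\log|z_1|)^{2(p-1)}\,d\lambda<\infty$. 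Skoda's lemma \cite[Lemma 2.3.22]{MM07} then forces $t$ to extend holomorphically with a zero of order $\geq1$ along $\Sigma$. This $L^2$-extension step is the main obstacle in the proof.

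With the hypotheses in place, Theorem \ref{T:mt} shows that the Fubini-Study current $\tilde\gamma_p$ of $H^0_{(2)}(M,L^p,h,\omega^n)$ extends to a positive closed current on $X$ and $\frac{1}{p}\tilde\gamma_p\to c_1(L,h)=T$ weakly. To compare $\tilde\gamma_p$ with $\gamma_p$, choose a local frame $e_\sigma$ of $\cO_X(\Sigma)$ on an open set $U$ with $\sigma=ge_\sigma$ and $\{g=0\}=\Sigma\cap U$. If $\{S_j\}$ is an orthonormal basis of $H^0_{(2)}(M,K_M^p)$ and $S_j=s_je^{\otimes p}$, then $\cI_\sigma(S_j)=g^ps_j(e\otimes e_\sigma)^{\otimes p}$, so by the Poincar\'e--Lelong formula
\[
\tilde\gamma_p\big|_{U\cap M}=\frac{1}{2}dd^c\log\sum_j|g^ps_j|^2=p[\Sigma\cap U]+\gamma_p\big|_{U\cap M}.
\]
Hence $\gamma_p:=\tilde\gamma_p-p[\Sigma]$ extends $\gamma_p$ to a closed current of order $0$ on $X$ defined locally by \eqref{e:gammap}, and $\frac{1}{p}\gamma_p+[\Sigma]=\frac{1}{p}\tilde\gamma_p\geq0$, giving (i). For (ii), the weak convergence $\frac{1}{p}\tilde\gamma_p\to T$ translates into $\frac{1}{p}\gamma_p+[\Sigma]\to T$, and the identification $T|_M=-\frac{1}{2\pi}\ric_\omega$ is furnished by Lemma \ref{L:htext}.
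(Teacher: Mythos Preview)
Your proof is correct and follows essentially the same route as the paper: transfer to $L=K_X\otimes\cO_X(\Sigma)$ via $\cI_\sigma$, extend the metric by Lemma~\ref{L:htext}, verify (A), (B), (C$'$), (D) and \eqref{e:mainhyp}, apply Theorem~\ref{T:mt} in the form of Remark~\ref{R:mtalt}, and read off $\tilde\gamma_p=\gamma_p+p[\Sigma]$. The only substantive difference is your treatment of condition (D): the paper simply cites \cite[Prop.~1.11]{Nad:90} for the extension of $L^2$ pluricanonical sections, whereas you unpack this by combining the Yau--Schwarz volume bound from Lemma~\ref{L:htext} with Skoda's lemma directly. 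Your computation is exactly the content of Nadel's argument in this setting, so the two proofs are equivalent; yours is self-contained, the paper's is shorter.

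One cosmetic point: the displayed identity $\tilde\gamma_p\big|_{U\cap M}=p[\Sigma\cap U]+\gamma_p\big|_{U\cap M}$ is written on $U\cap M$, where $[\Sigma\cap U]=0$; the Poincar\'e--Lelong step you intend really takes place on all of $U$ in the sense of currents (using that $\log\sum_j|g^ps_j|^2$ is psh and $\log\sum_j|s_j|^2$ is a difference of psh functions, hence $L^1_{loc}$). The conclusion you draw from it is correct.
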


\begin{proof} By \cite[Prop.\,1.11]{Nad:90} (see also \cite[Lemma 5.1]{Ti90}) the sections in $H^0_{(2)}(M,K_M^p)$ extend to meromorphic sections of $K^p_X$ over $X$, with poles in $\Sigma$ of order at most $p-1$. This yields $(i)$. 

\par Let $h_{M,\sigma}$ be the metric defined in \eqref{e:hsigma} on $L\mid_{M}$, and $h$ be its extension to $L$ provided in Lemma \ref{L:htext}, so $c_1(L,h)\mid_{_M}=-\frac{1}{2\pi}\ric_\omega$. It follows from Lemma \ref{L:isomorph} and \cite[Prop.\,1.11]{Nad:90} that $\mathcal I_\sigma(H^0_{(2)}(M,K_M^p))=H^0_{(2)}(M,L^p,h,\omega^n)\subset H^0(X,L^p)$. So $X,\,\Sigma,\,(L,h)$ and the volume form $\omega^n$ verify assumptions (A), (B), (C'), (D) (see Remark \ref{R:mtalt}). 

\par Lemmas \ref{L:isomorph} and \ref{L:htext} imply that $\mathcal I_\sigma$ maps an orthonormal basis of $H^0_{(2)}(M,K_M^p)$ onto an orthonormal basis of $H^0_{(2)}(M,L^p,h,\omega^n)$ and that the Bergman kernel functions $P_p$ defined by \eqref{e:Bergfcn} for these spaces are equal. Condition \eqref{e:mainhyp} holds, as shown by \cite[\S4]{Ti90} or \cite[Th.\,6.1.1]{MM07}. By Theorem \ref{T:mt} and Remark \ref{R:mtalt} we have $\frac{1}{p}\,\gamma'_p\to c_1(L,h)$ weakly on $X$, where $\gamma'_p$ are the Fubini-Study currents defined by \eqref{e:gammap} for $H^0_{(2)}(M,L^p,h,\omega^n)$. 

\par Observe that Lemmas \ref{L:isomorph} and \ref{L:htext} imply $\gamma'_p=\gamma_p+p[\Sigma]$ on $X$. This completes the proof.
\end{proof}

\begin{Remark}
Note that assumptions (I)\,-(III) are verified if $X$ is a compact projective manifold, $\Sigma$ is an effective divisor of $X$, and $L=K_X\otimes\cO(\Sigma)$ is ample, due to a result by R. Kobayashi \cite{Kob84} about the existence of K\"ahler-Einstein metrics on $X\setminus\Sigma$. Conversely, let $X,\,\Sigma,\,M,\,\omega$ verify assumptions (I)-(III) as in Theorem \ref{T:qpc1}. By the proof of \cite[Prop.\,1.12]{Nad:90} we see that the following properties hold:

\par (a) There exists $p_0$ such that $H^0(X,L^{p_0})$ separates the points of $M$ and gives local holomorphic coordinates on $M$, 

\par (b) $M$ is biholomorphic to a quasiprojective manifold; in fact the meromorphic Kodaira map 
$\Phi_{p_0}:X\dashrightarrow\mathbb{P}^N$ defined by $H^0(X,L^{p_0})$ induces a birational morphism to a normal projective variety $Y$ such that $\Phi_{p_0}(M)$ is Zariski open in $Y$ and $\Phi_{p_0}:M\longrightarrow\Phi_{p_0}(M)$ is biholomorphic,

\par (c) $L$ is big and $X$ is Moishezon.

\noindent Note that $L$ is not necessarily ample in the case of toroidal compactifications considered in the Section \ref{SS:aq}.
\end{Remark}

\subsection{Arithmetic quotients}\label{SS:aq}

\par Let $D$ be a bounded symmetric domain in $\mathbb{C}^n$ and let $\Gamma$ be a neat arithmetic group acting properly discontinuously on $D$ (see \cite[p. 253]{Mum77}). Then $U:=D/\Gamma$ is a smooth quasi-projective variety, called an arithmetic variety. By \cite{AMRT:10}, $U$ admits a smooth toroidal compactification $X$. In particular, $\Sigma:=X\setminus U$ is a divisor with normal crossings. The Bergman metric $\omega^{\mathcal B}_{D}$ on $D$ descends to a complete K\"ahler metric $\omega:=\omega^{\mathcal B}_{U}$ on $U$. Moreover, $\omega$ is K\"ahler-Einstein with $\ric_{\omega}=-\omega$. We denote by $h^{K_U}$ the Hermitian metric induced by $\omega$ on $K_U$. We wish to study the spaces $H^0_{(2)}(U,K_U^p)$ of $L^2$-pluricanonical sections with respect to the metric $h^{K^p_U}$ and the volume form $\omega^n$.

\par As in Section \ref{SS:qp}, consider the line bundle $L:=K_X\otimes\cO_{X}(\Sigma)$ and the metric $h_{U,\sigma}$ on $L\mid_{_U}$ defined in \eqref{e:hsigma}. By Lemma \ref{L:htext} $h_{U,\sigma}$ extends uniquely to a positively curved metric $h$ on $L$ and $c_1(L,h)\mid_{U}=\frac{\omega}{2\pi}$. Clearly, Theorem \ref{T:qpc1} holds in this setting: 

\begin{Theorem}\label{T:arit} Let $X$ be a smooth toroidal compactification of an arithmetic quotient $U=D/\Gamma$ and set $\Sigma=X\setminus U$, $L=K_X\otimes\cO_{X}(\Sigma)$. Let $\omega$ be the induced Bergman metric on $U$ and let $h^{K_U}$ be the metric  induced by $\omega$ on $K_U$.
Then we have:

\par (i) The metric $h^{K_U}$ defines a singular metric $h$ on $L$ such that $c_1(L,h)$ is a positive current on $X$ which extends $\frac{\omega}{2\pi}$. 

\par (ii) $H^0_{(2)}(U,L^p,h,\omega^n)\subset H^0(X,L^p)$ for all $p\geq1$, so the currents $\gamma_p$ given by \eqref{e:gammap} for $H^0_{(2)}(U,L^p,h,\omega^n)$ extend naturally to positive closed currents on $X$.

\par (iii) $\frac{1}{p}\,\gamma_p\to c_1(L,h)$ and $\frac{1}{p}\,[\sigma_p=0]\to c_1(L,h)$ in the weak sense of currents on $X$, for $\lambda_\infty$-a.e. sequence $\{\sigma_p\}_{p\geq1}\in{\mathcal S}_\infty$, where ${\mathcal S}_\infty,\lambda_\infty$ are as in Theorem \ref{T:rs}.
\end{Theorem}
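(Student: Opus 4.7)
The plan is to reduce Theorem \ref{T:arit} to the general framework already developed, specifically to Theorem \ref{T:qpc1} applied in the Kähler–Einstein setting supplied by the Bergman metric. First I would verify that the quadruple $X,\Sigma,U,\omega$ satisfies assumptions (I), (II), (III) from Section \ref{SS:qp}: (I) is immediate since $X$ is a compact complex manifold and $\Sigma=X\setminus U$ is an analytic subvariety (in fact a divisor with normal crossings by the toroidal construction \cite{AMRT:10}); (II) follows because the descended Bergman metric $\omega=\omega^{\mathcal B}_U$ is complete, Kähler and Einstein with $\ric_\omega=-\omega$, so $\ric_\omega\leq-\lambda\omega$ holds with $\lambda=1$; (III) is immediate from the fact that $\Sigma$ is a normal-crossings divisor and hence of pure dimension $n-1$.

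Once these assumptions are in place, part (i) is essentially Lemma \ref{L:htext}: the metric $h_{U,\sigma}=h^{K_U}\otimes h_\sigma$ on $L\mid_{_U}$ is positively curved with curvature $\omega/(2\pi)$ on $U$, and by the volume argument using Yau's Schwarz lemma already used there, its local psh weights are bounded above near $\Sigma^{n-1}_{reg}$, so they extend uniquely as psh functions across $\Sigma$ to yield the desired $h$. Part (ii) is then a direct consequence of Lemma \ref{L:isomorph} combined with Nadel's extension result \cite[Prop.\,1.11]{Nad:90}: the isomorphism $\mathcal I_\sigma(S)=S\otimes\sigma^{\otimes p}$ identifies $H^0_{(2)}(U,L^p,h,\omega^n)$ with $H^0_{(2)}(U,K_U^p)$ isometrically, and the latter space consists of sections extending to meromorphic pluricanonical sections of $K_X^p$ with poles of order $\leq p-1$ along $\Sigma$; tensoring by $\sigma^{\otimes p}$ absorbs these poles, yielding honest holomorphic sections of $L^p$ over $X$.

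For part (iii), I would apply Theorem \ref{T:mt} in the variant form of Remark \ref{R:mtalt} to the data $X,\Sigma,(L,h),\omega^n$. Conditions (A) and (B) are already established, (C') holds trivially because $\omega$ is a smooth Kähler metric on $U=X\setminus\Sigma$, and (D) is exactly the content of part (ii). The essential input is hypothesis \eqref{e:mainhyp} on the Bergman kernel function $P_p$ of $H^0_{(2)}(U,L^p,h,\omega^n)$; by the identification via $\mathcal I_\sigma$, this $P_p$ coincides pointwise on $U$ with the Bergman kernel of $H^0_{(2)}(U,K_U^p)$ with respect to $(h^{K_U},\omega^n)$, so \eqref{e:mainhyp} follows either from Tian's estimate \cite[Theorem 4.1]{Ti90} (valid because $\ric_\omega$ is bounded and strictly negative) or, more generally, from the asymptotic expansion \cite[Th.\,6.1.1]{MM07} for complete Kähler manifolds with bounded geometry. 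Theorem \ref{T:mt} then yields $\frac{1}{p}\gamma_p\to c_1(L,h)$ weakly on $X$, and Theorem \ref{T:rs} (applied in the compact setting of Section \ref{S:SZ}, whose hypotheses are now all verified) gives the $\lambda_\infty$-a.e.\ equidistribution of the zero divisors $[\sigma_p=0]$.

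The only non-routine step is the verification of \eqref{e:mainhyp}; the bounded negative Ricci curvature of $\omega$ forces the geometry at infinity to be asymptotically Poincaré-like along $\Sigma$, so the relevant Bergman-kernel asymptotics are not a formal consequence of the compactly-supported theory and genuinely require one of the results cited above. Everything else is bookkeeping that tracks the identification between pluricanonical sections of $K_U$ and sections of $L=K_X\otimes\cO_X(\Sigma)$ and an invocation of the general equidistribution machinery.
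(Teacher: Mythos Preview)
Your proposal is correct and follows essentially the same approach as the paper: the paper treats Theorem \ref{T:arit} as an immediate instance of Theorem \ref{T:qpc1} once one checks that the toroidal compactification of $D/\Gamma$ satisfies assumptions (I)--(III), with part (i) coming from Lemma \ref{L:htext}, part (ii) from Lemma \ref{L:isomorph} and \cite[Prop.\,1.11]{Nad:90}, and part (iii) from Theorem \ref{T:mt} (via Remark \ref{R:mtalt}) together with Theorem \ref{T:rs} for the random-sections statement. Your write-up simply unpacks the proof of Theorem \ref{T:qpc1} explicitly rather than citing it, and correctly adds the invocation of Theorem \ref{T:rs} for the equidistribution of random zero divisors, which is stated in \ref{T:arit}(iii) but not in \ref{T:qpc1}.
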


\par By Lemma \ref{L:isomorph}, $H^0_{(2)}(U,K_U^p)\cong H^0_{(2)}(U,L^p,h,\omega^n)$. Let us describe this space in more detail. By \cite[Prop.\,3.3,\,3.4(b)]{Mum77},
\begin{equation*}
\begin{split}
H^0(X,L^p)\cong\big\{&\text{modular forms with respect to the $p$\,-th power} \\ &\text{of the canonical automorphy
factor}\big\}\,,
\end{split}
\end{equation*}
so $H^0_{(2)}(U,K_U^p)\subset H^0(X,L^p)$ are modular forms. 
The space 
$$H^0(X,L^p\otimes\cO_X(\Sigma)^{-1})=H^0(X,K_X^p\otimes\cO_X(\Sigma)^{p-1})$$ 
of modular forms vanishing on the boundary is called the space of cusp forms.

\par We will need the following definition from Mumford \cite[p.\,242]{Mum77}. Let $\mathbb{D}$ be the unit disc in $\mathbb{C}$. Every $x\in\Sigma$ has a coordinate neighborhood $V_x\cong\mathbb{D}^n$ such that for some $1\leq l\leq n$,
\begin{equation} \label{compl12,18}
V_x\cong\mathbb{D}^n,\;\,x=0,\;\,V_x\cap\Sigma\cong\{z=(z_1,\ldots,z_n):\,z_1z_2\ldots z_l=0\}\,.\end{equation}

\begin{Definition}\label{D:good} A smooth Hermitian metric $h$ on $L\mid_{_U}$ is said to be \emph{good} on $X$ if for all $x\in\Sigma$ and all holomorphic frames $e$ of $L$ in a neighborhood $V_x\cong\mathbb{D}^n$ of $x$ as in \eqref{compl12,18} we have 

\par (i) $|e|^2_h,|e|^{-2}_h\leq C(\sum_{k=1}^l\log|z_k|)^{2\alpha}$, for some $C>0$, $\alpha\geq1$\,, 

\par (ii) The forms $\eta=\partial\log|e|^2_h$ and $d\eta$ have Poincar\'e growth on $V$\,.
\end{Definition}

\par Examples of Hermitian line bundles with good metrics are provided by the following class of line bundles over arithmetic quotients considered by Mumford in \cite[p.\,256]{Mum77}. Namely, if $D$ is a bounded symmetric domain, then $D\cong K\backslash G$, where $G$ is a semi-simple adjoint group and $K$ a maximal compact subgroup.  Let $E_0$ be a $G$-equivariant holomorphic line bundle over $D$. Let $U=D/\Gamma$ be an arithmetic quotient and $X$ be a smooth toroidal compactification of $U$. Then $\Gamma$ acts on $E_0$ and $E_U=E_0/\Gamma$ is a holomorphic line bundle on $U$. Moreover, $E_0$ carries a $G$-invariant Hermitian metric $h_0$ which induces a Hermitian metric $h_U$ on $E_U$. By \cite[Main Theorem 3.1]{Mum77}, $E_U$ admits a unique extension to a holomorphic line bundle $\overline{E}$ over $X$ such that the metric $h_U$ on $\overline{E}\mid_U=E_U$ is good on $X$.
 
\par Consider the $G$-invariant line bundle $(E_0,h_0)=(K_D,h^{K_D})$ on $D$, where $h^{K_D}$ is induced by $\omega^{\mathcal B}_{D}$. Note that the Bergman metric $\omega^{\mathcal B}_{D}$ is $G$-invariant and so is $h^{K_D}$. Then $(E_U,h_U)=(K_U,h^{K_U})$, where $h^{K_U}$ is induced by $\omega^{\mathcal B}_{U}$. By \cite[Main Thr. 3.1, Prop. 3.4]{Mum77} the extension $\overline{K}_U$ of $K_U$ satisfies the following condition: for any $x\in\Sigma$ and any open coordinate neighborhood $V\cong\mathbb{D}^n$ of $x$ as in \eqref{compl12,18}, a holomorphic frame of $\overline{K}_U\mid_V$ is of the form $e=(z_1z_2\ldots z_l)^{-1}dz_1\wedge\ldots\wedge dz_n$. This shows that $\overline{K}_U\cong K_X\otimes\cO_X(\Sigma)=:L$ and the metric $h_{U,\sigma}$ induced by $h^{K_U}$ (see \eqref{e:hsigma}) on $L\mid_{_U}\cong K_U$ is good on $X$. Hence we obtain by condition \emph{(i)} of Definition \ref{D:good} that 
\begin{equation}\label{e:cangood}
\omega^n\gtrsim \prod_{j=1}^l |z_j|^{-2}\Big(\sum_{k=1}^l\log|z_k|\Big)^{-2\alpha}\Omega^n\quad\text{on $V\setminus\Sigma$}\,,
\end{equation}
where $\alpha\geq1$ and $\Omega$ is a positive (1,1) form on $X$, and  
\begin{equation}\label{e:cangood1}
|e|^2_{h_{U,\sigma}}\lesssim\Big(\sum_{k=1}^l\log|z_k|\Big)^{2\alpha}\quad\text{on $V\setminus\Sigma$}\,.
\end{equation}

\begin{Lemma}\label{l2=cusp}
Let $U=D/\Gamma$ be an arithmetic quotient and let $X$ be a smooth toroidal compactification of $U$. Then $H^0_{(2)}(U,K_U^p)\cong H^0(X,K_X^p\otimes\cO_X(\Sigma)^{p-1})$, i.\,e.\ the space of $L^2$-pluricanonical sections is the space of cusp forms.
\end{Lemma}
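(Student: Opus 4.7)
The plan is to reduce the statement, via Lemma~\ref{L:isomorph}, to a local integrability criterion along the boundary divisor, and then verify it using Mumford's good metric estimates.

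First, Lemma~\ref{L:isomorph} applied with $E=K_X^p$ furnishes an isometric bijection
\[
\mathcal{I}_\sigma\colon H^0_{(2)}(U,K_U^p)\;\stackrel{\sim}{\longrightarrow}\;H^0_{(2)}(U,L^p,h,\omega^n),
\]
and by Theorem~\ref{T:qpc1}(ii) the right hand side sits inside $H^0(X,L^p)$. Separately, multiplication by $\sigma$ provides the standard algebraic identification $H^0(X,K_X^p\otimes\cO_X(\Sigma)^{p-1})=H^0(X,L^p\otimes\cO_X(-\Sigma))$, i.e.\ with those $T\in H^0(X,L^p)$ that vanish along $\Sigma$. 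So it suffices to show: for $T\in H^0(X,L^p)$,
\[
T\in H^0_{(2)}(U,L^p,h,\omega^n)\iff T\text{ vanishes along }\Sigma.
\]

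Second, this criterion is purely local near each boundary point $x\in\Sigma$. In coordinates as in \eqref{compl12,18} on a neighborhood $V$ with $V\cap\Sigma=\{z_1\cdots z_l=0\}$, I would use the Mumford frame $e=(z_1\cdots z_l)^{-1}\,dz_1\wedge\cdots\wedge dz_n$ of $L|_V$ recalled before \eqref{e:cangood}. From the definitions of $h^{K_U}$ and $h_\sigma$ a short computation yields
\[
|e|^2_h\;\asymp\;\prod_{j=1}^l|z_j|^{-2}\,\big/\,\det[g_{jk}]\qquad\text{on }V\setminus\Sigma,
\]
up to a smooth positive bounded factor, where $\omega=i\sum g_{jk}\,dz_j\wedge d\bar z_k$ and $\omega^n$ is proportional to $\det[g_{jk}]\,\Omega^n$. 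Feeding this identity into \emph{both} halves of Definition~\ref{D:good}(i) produces the two-sided estimate
\[
\omega^n\;\asymp\;\prod_{j=1}^l|z_j|^{-2}\Bigl(\sum_{k=1}^l\log|z_k|\Bigr)^{\pm 2\alpha}\Omega^n\quad\text{on }V\setminus\Sigma,
\]
of which \eqref{e:cangood} is the lower half.

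Third, writing $T=g\cdot e^{\otimes p}$ with $g\in\cO(V)$, the $\det[g_{jk}]$ factors in
$|T|^2_{h^p}\,\omega^n=|g|^2\prod_{j=1}^l|z_j|^{-2p}\det[g_{jk}]^{1-p}\,\Omega^n$
collapse via the two-sided bound on $\det[g_{jk}]$ to give
\[
|T|^2_{h^p}\,\omega^n\;\asymp\;|g|^2\prod_{j=1}^l|z_j|^{-2}\Bigl(\sum_{k=1}^l\log|z_k|\Bigr)^{\pm 2\alpha(p-1)}\Omega^n.
\]
Since $(\log|z_k|)^\beta$ is locally integrable near $\{z_k=0\}$ for every real $\beta$, the convergence of $\int_{V\setminus\Sigma}|T|^2_{h^p}\,\omega^n$ is equivalent to the local integrability of $|g|^2\prod_{j=1}^l|z_j|^{-2}\,\Omega^n$ on $V$, which by the elementary one-variable test is in turn equivalent to $z_1\cdots z_l$ dividing $g$, i.e.\ to $T$ vanishing along $\Sigma\cap V$. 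Patching over boundary points yields the desired global equivalence, and hence the lemma. The main technical point I foresee is extracting the \emph{upper} half of the estimate on $\omega^n$: the lower half is recorded as \eqref{e:cangood}, while the upper half requires the matching inequality $|e|^{-2}_h\lesssim(\sum_k\log|z_k|)^{2\alpha}$ from Definition~\ref{D:good}(i), together with the explicit form of the Mumford frame $e$.
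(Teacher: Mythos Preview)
Your approach has a genuine gap in the direction ``$L^2\Rightarrow$ vanishes on $\Sigma$''. The claim that ``$(\log|z_k|)^\beta$ is locally integrable for every real $\beta$, hence the log factors do not affect integrability'' is only valid when the remaining factor is already integrable; it fails precisely in the borderline situation that arises here. Concretely, suppose $g$ does not vanish on $\{z_1=0\}$; near a generic point of that hypersurface (where $z_2,\ldots,z_l$ and $g$ are bounded away from $0$) your lower bound reads
\[
|T|^2_{h^p}\,\omega^n\;\gtrsim\;|z_1|^{-2}\,|\log|z_1||^{-2\alpha(p-1)}\,\Omega^n.
\]
But $\int_0^\varepsilon r^{-1}|\log r|^{-\beta}\,dr$ \emph{converges} for $\beta>1$, so for $p\geq2$ (and any $\alpha\geq1$) this lower bound is integrable and you cannot conclude that $T$ fails to be $L^2$. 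Mumford's good-metric inequalities in Definition~\ref{D:good}(i) simply do not pin down the log exponent sharply enough to force divergence at the borderline power $|z_1|^{-2}$.

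The paper sidesteps this by invoking Nadel's result (based on Yau's Schwarz lemma), which gives the sharper upper bound $\det[g_{jk}]\lesssim\prod_j(|z_j|\log|z_j|)^{-2}$ and hence the lower bound $\det[g_{jk}]^{1-p}\gtrsim\prod_j(|z_j|\log|z_j|)^{2(p-1)}$; the log factor now has \emph{positive} exponent, so the borderline term $|z_1|^{-2}$ truly diverges. Your argument for the opposite direction (cusp form $\Rightarrow L^2$) is correct and essentially the same as the paper's, which uses \eqref{e:cangood1} together with the integrability of $\omega^n$. To repair the proof you should either replace the appeal to Definition~\ref{D:good}(i) by the Yau--Schwarz comparison with the Poincar\'e metric, or simply cite \cite[Prop.~1.11]{Nad:90} for the inclusion $H^0_{(2)}(U,K_U^p)\subset H^0(X,K_X^p\otimes\cO_X(\Sigma)^{p-1})$.
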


\begin{proof} By \cite[Prop.\,1.11]{Nad:90} we have $H^0_{(2)}(U,K_U^p)\subset H^0(X,K_X^p\otimes\cO_X(\Sigma)^{p-1})$. If $S\in H^0(X,K_X^p\otimes\cO_X(\Sigma)^{p-1})$, then $S=fe^{\otimes p}$, in a neighborhood $V_x$ of $x\in\Sigma$ as in \eqref{compl12,18}, where $f\in\cO(V_x)$ vanishes on $\Sigma$ and $e$ is a frame of $L$ over $V_x$\,. Estimate \eqref{e:cangood1} together with the fact that $\omega^n$ is integrable over $X$ \cite[Prop. 1.10]{Nad:90} imply that 
$$\int_{V_x\setminus\Sigma}|S|_{h_{U,\sigma}}^2\,\omega^n=\int_{V_x\setminus\Sigma}|f|^2|e^{\otimes p}|_{h_{U,\sigma}}^2\,\omega^n\lesssim\int_{V_x\setminus\Sigma}|f|^2\big(\sum_{k=1}^l\log|z_k|\big)^{2p\alpha}\,\omega^n<\infty,$$ 
thus $S\in H^0_{(2)}(U,L^p,h,\omega^n)\cong H^0_{(2)}(U,K_U^p)$.
\end{proof}

\par Theorem \ref{T:arit} shows that the zero-divisors of random cusp forms $\{\sigma_p\}$ (where $\sigma_p$ is a $p$\,-pluricanonical section) are equidistributed with respect to the extension of the Bergman metric on a smooth toroidal compactification of an arithmetic quotient. The equidistribution on the arithmetic quotient $D/\Gamma$ itself was shown in \cite{DMS}. In this framework the equidistribution of zeros is a variant of the Quantum Unique Ergodicity conjecture of Rudnick-Sarnak \cite{RuSa:94}, cf.\ Rudnick \cite{Rud05}, Holowinsky and Soundararajan \cite{HolSou:10}, Marshall \cite{Mars11}.

\par The case of arithmetic quotients of dimension $1$ is particularly interesting.

\begin{Corollary}
Let $\Gamma\subset SL_2(\mathbb Z)$ be a subgroup of finite index acting freely and properly discontinuously on the hyperbolic plane $\mathbb H$ via linear fractional transformations. Set $U=\mathbb H/\Gamma$ and let $\omega$ be the induced Poincar\'e metric on $U$. Let $X$ be a compact Riemann surface such that $U\subset X$ and $X\setminus U=\Sigma$ is a finite set. Let $L=K_X\otimes\cO_{X}(\Sigma)$. Then the space $S_{2p}(\Gamma)$ of cusp forms of weight $2p$ of $\Gamma$ 
is isomorphic to $H^0_{(2)}(U,K^{p}_{U})$ and assertions (i)-(iii) of Theorem \ref{T:arit} hold for the Fubini-Study currents $\gamma_p$ defined by $S_{2p}(\Gamma)$ and for the zero-sets of random sequences of cusp forms. 
\end{Corollary}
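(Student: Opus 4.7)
The plan is to reduce the corollary to Theorem \ref{T:arit} by checking that the one-dimensional situation fits its hypotheses. The hyperbolic plane $\mathbb H$ is biholomorphic (via the Cayley transform) to the unit disc, which is the bounded symmetric domain of rank one and dimension one. Since $\Gamma$ has finite index in the arithmetic group $SL_2(\mathbb Z)$, it is itself arithmetic; the assumption that $\Gamma$ acts freely forces it to be torsion-free in $SL_2(\mathbb Z)$, which in this situation suffices for the neatness hypothesis used in \cite{Mum77}. Moreover, the Bergman metric on $\mathbb H$ is a positive constant multiple of the Poincar\'e metric, since both are $PSL_2(\mathbb R)$-invariant K\"ahler metrics on a rank-one symmetric space; the induced metric $\omega$ on $U=\mathbb H/\Gamma$ therefore agrees up to a harmless scalar with the induced Bergman metric $\omega^{\mathcal B}_U$ of Section \ref{SS:aq}. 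Next, I would observe that $X$ is automatically a smooth toroidal compactification: $\Sigma=X\setminus U$ is a finite set of points on the compact Riemann surface $X$, hence trivially a normal-crossings divisor on a smooth manifold, so the line bundle $L=K_X\otimes\cO_X(\Sigma)$ is the one appearing in Theorem \ref{T:arit}.

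The essential step is to identify the space $S_{2p}(\Gamma)$ of weight-$2p$ cusp forms with $H^0_{(2)}(U,K_U^p)$. By Lemma \ref{l2=cusp} this reduces to producing an isomorphism $S_{2p}(\Gamma)\cong H^0(X,K_X^p\otimes\cO_X(\Sigma)^{p-1})$. For this I would use the classical correspondence $f\mapsto f(z)(dz)^{\otimes p}$: the transformation rule $d(\gamma z)=(cz+d)^{-2}dz$ makes the modular invariance $f(\gamma z)=(cz+d)^{2p}f(z)$ equivalent to $f(z)(dz)^{\otimes p}$ descending to a holomorphic section of $K_U^p$. Near a cusp with standard uniformizer $q$ (for which $dz=c\,dq/q$ for a nonzero constant $c$), the Fourier expansion $f=\sum_{n\geq 0}a_nq^n$ yields $f(z)(dz)^{\otimes p}=c^p\bigl(\sum_n a_nq^{n-p}\bigr)(dq)^{\otimes p}$. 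Such a section extends to a holomorphic section of $K_X^p\otimes\cO_X(\Sigma)^{p-1}$ precisely when the pole order at the cusp is at most $p-1$, i.e.\ when $a_0=0$, which is exactly the cusp-form condition.

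With the identification $S_{2p}(\Gamma)\cong H^0_{(2)}(U,K_U^p)$ in hand, assertions (i)--(iii) follow at once as instances of the corresponding conclusions in Theorem \ref{T:arit} applied to our $X$, $\Sigma$, $L$, $h$ and $\omega$. I expect the only real obstacle to be this cusp-form identification, and in particular the bookkeeping of the pole orders at each cusp; the remaining verifications amount to unwinding the general toroidal-compactification setup in the trivial case where the compactifying divisor is a finite set of points.
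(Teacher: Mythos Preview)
Your proposal is correct and follows the same route as the paper: the corollary is stated there without proof, as an immediate specialization of Theorem~\ref{T:arit} and Lemma~\ref{l2=cusp} to the one-dimensional setting. You supply the details the paper leaves implicit---verifying that $\mathbb H/\Gamma$ with its Poincar\'e metric fits the arithmetic-quotient framework, and carrying out the classical identification $f\mapsto f(z)(dz)^{\otimes p}$ together with the pole-order check at the cusps---so your argument is more explicit than, but entirely in line with, what the paper intends.
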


\medskip

\par We can extend the results of this section for the class of invariant line bundles considered by Mumford \cite[p.\,256]{Mum77}.

\begin{Theorem} Let $D,\,U,\,X,\,(E_0,h_0)$ be as above and assume that $iR^{(E_0,h_0)}\geq \varepsilon\omega^{\mathcal B}_{D}$ on $D$, for some $\varepsilon>0$. Let $(E_U,h_U)$ be the induced line bundle on $U$ and $E$ be its unique extension to $X$ so that the metric $h_U$ on $E\mid_U$ is good on $X$. Then $h_U$ extends to a singular Hermitian metric $h$ on $E$ such that $c_1(E,h)$ is a positive current on $X$ which extends $c_1(E_U,h_U)$, and the conclusions of Theorems \ref{T:mt}, \ref{T:SZ} and \ref{T:rs} hold for the spaces $H^0_{(2)}(U,E^p,h,\omega^n)$ and for $c_1(E,h)$. 
\end{Theorem}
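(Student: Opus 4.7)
The plan is to verify the hypotheses of Theorem \ref{T:mt} for the data $(X,\Sigma,E,h,\omega^n)$ and then invoke Theorems \ref{T:mt}, \ref{T:SZ} and \ref{T:rs}. The curvature hypothesis $iR^{(E_0,h_0)}\ge\varepsilon\,\omega^{\mathcal B}_D$ on $D$ descends to $c_1(E_U,h_U)\ge\tfrac{\varepsilon}{2\pi}\,\omega$ on $U$, so in particular this curvature is smooth and strictly positive on $U$.

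I would begin with the extension of $h_U$. Fix $x\in\Sigma$, a coordinate polydisc $V\cong\mathbb{D}^n$ around $x$ as in \eqref{compl12,18}, and a holomorphic frame $e$ of $E$ over $V$. Writing $|e|^2_{h_U}=e^{-2\varphi}$ on $V\setminus\Sigma$, the positivity of $c_1(E_U,h_U)$ makes $\varphi$ psh there, while the goodness condition (i) of Definition \ref{D:good} forces $|\varphi|\le\alpha\log\bigl|\log|z_1\cdots z_l|\bigr|+O(1)$, so $\varphi\in L^1_{\mathrm{loc}}(V)$. Goodness condition (ii) combined with $c_1(E_U,h_U)=\tfrac{i}{2\pi}\,\db\partial\log|e|^2_{h_U}$ shows that $dd^c\varphi$ has Poincar\'e growth on $V\cap U$, and since the Poincar\'e volume is integrable on $V$, the positive closed current $dd^c\varphi$ has locally finite mass near $\Sigma$. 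As $\Sigma$ is complete pluripolar, the Skoda--El Mir extension theorem yields a positive closed current $T$ on $V$ with $T|_{V\setminus\Sigma}=dd^c\varphi$. Writing $T=dd^c\psi$ on the polydisc $V$ with $\psi$ psh, the difference $\psi-\varphi$ is pluriharmonic on $V\setminus\Sigma$ and lies in $L^1_{\mathrm{loc}}(V)$, hence extends as a pluriharmonic function $u$ on $V$ (by removability for pluriharmonic functions in $L^1_{\mathrm{loc}}$). Setting $\wi\varphi:=\psi-u$ produces a psh extension of $\varphi$ to $V$, and these local potentials glue to a positively curved singular Hermitian metric $h$ on $E$ with $h|_{E|_U}=h_U$ and $c_1(E,h)|_U=c_1(E_U,h_U)$.

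Next I would verify conditions (A)-(C) of Theorem \ref{T:mt} and the asymptotic hypothesis \eqref{e:mainhyp}. Assumptions (A) and (B) are immediate from the construction, since $h_U$ is smooth on $U=X\setminus\Sigma$. For (C), fix any smooth positive $(1,1)$-form $\Omega$ on $X$ and write $\omega^n=f\Omega^n$; then $f$ is smooth positive on $U$, and for $x\in\Sigma^{n-1}_{reg}$ the local estimate \eqref{e:cangood} (used in Lemma \ref{l2=cusp}) gives $f(z)\gtrsim\prod|z_j|^{-2}\bigl(\sum\log|z_k|\bigr)^{-2\alpha}\ge c_x>0$ in a neighborhood of $x$. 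For \eqref{e:mainhyp}, the complete K\"ahler manifold $(U,\omega)$ has bounded geometry and the Hermitian line bundle $(E|_U,h|_U)$ satisfies $c_1(E,h)|_U\ge\tfrac{\varepsilon}{2\pi}\omega$, so the Bergman kernel has the full asymptotic expansion of \cite[Theorem 6.1.1]{MM07} (an extension of Tian's theorem \cite[\S 4]{Ti90}) locally uniformly on $U$, which yields $\tfrac{1}{p}\log P_p\to 0$ locally uniformly on $U$.

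With these verifications, Theorem \ref{T:mt} gives $\tfrac{1}{p}\gamma_p\to c_1(E,h)$ weakly on $X$ and $\tfrac{1}{p^k}\gamma_p^k\to c_1(E,h)^k$ weakly whenever $\dim\Sigma\le n-k$. Since $X$ is compact, Theorems \ref{T:SZ} and \ref{T:rs} then deliver the expectation formula $E_p^k[\sigma=0]=\gamma_p^k$, the weak convergence $p^{-k}E_p^k[\sigma=0]\to c_1(E,h)^k$, and the almost sure equidistribution of zero-sets of random sequences in $\mathcal S_\infty$. The main obstacle is the construction of the positively curved extension $h$ in the second paragraph: Mumford's goodness is strictly weaker than local boundedness above of the weight $\varphi$, which is the hypothesis needed for the classical extension of psh functions across an analytic set, so one must bring in the positivity of curvature and the Skoda--El Mir theorem in order to realize the trivial extension of $c_1(E_U,h_U)$ as a positive closed current globally on $X$.
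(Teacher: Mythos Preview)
Your overall plan---verify conditions (A)--(C) and \eqref{e:mainhyp} for $(X,\Sigma,E,h,\omega^n)$ and then invoke Theorems~\ref{T:mt}, \ref{T:SZ}, \ref{T:rs}---matches the paper exactly, and your checks of (A), (C) via \eqref{e:cangood}, and \eqref{e:mainhyp} via \cite[Th.\,6.1.1]{MM07} are correct and essentially identical to the paper's.

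The genuine gap is in your extension argument for $h_U$. The step ``$\psi-\varphi$ is pluriharmonic on $V\setminus\Sigma$ and lies in $L^1_{loc}(V)$, hence extends as a pluriharmonic function on $V$'' is false as stated: $\log|z_1|$ is pluriharmonic on $\mathbb{D}^n\setminus\{z_1=0\}$, lies in $L^1_{loc}(\mathbb{D}^n)$, and does not extend pluriharmonically. So Skoda--El~Mir gives you a positive closed extension $T$ of $dd^c\varphi$, but you cannot recover a psh potential for $T$ that agrees with $\varphi$ without ruling out exactly this $\log|z_1|$ discrepancy---and $L^1_{loc}$ alone does not rule it out.

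The paper avoids this detour entirely. From goodness~(i) you already extracted $|\varphi|\le\alpha\log\bigl|\sum_k\log|z_k|\bigr|+O(1)$, so in particular $\varphi(z)\le C\log\bigl|\log\operatorname{dist}(z,\Sigma)\bigr|$ near each regular point of $\Sigma$ of codimension one. The paper proves an elementary lemma (Lemma~\ref{L:pshext}): a psh function on $V\setminus\Sigma$ satisfying such an upper bound is automatically locally bounded above near $\Sigma$, hence extends psh. The argument is one line of convexity: for fixed $z''$, $r\mapsto\max_{|z_1|=r}\varphi(z_1,z'')$ is convex in $\log r$, and a convex function of $t=\log r$ bounded above by $C\log|t|$ as $t\to-\infty$ must be nondecreasing, hence bounded near $r=0$. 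Thus goodness~(i) \emph{plus subharmonicity} already yields local boundedness above, contrary to your closing remark that ``Mumford's goodness is strictly weaker than local boundedness above of the weight.'' You do not need goodness~(ii), Poincar\'e growth of the curvature, or Skoda--El~Mir at all.
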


\begin{proof}  Let $x\in\Sigma$ and $V$ be a coordinate neighborhood of $x$ as in \eqref{compl12,18} on which there exists a holomorphic frame $e$ of $E$. Then the local weights $\varphi=-\log|e|_{h_U}$ verify
$$-\log\left|\sum_{k=1}^l\log|z_k|\right|-\frac{\log C}{2\alpha}\leq\frac{\varphi}{\alpha}\leq\log\left|\sum_{k=1}^l\log|z_k|\right|+\frac{\log C}{2\alpha}\;\;{\rm on}\;V\setminus\Sigma.$$
Since the metric $h_U$ is positively curved, the function $\varphi$ is psh on $U\cap V=V\setminus\Sigma$. Hence $\varphi$ is psh on $V$, in view of the previous upper bound and of Lemma \ref{L:pshext} hereafter. Thus $c_1(E,h)\geq0$ and condition (B) is fulfilled.

\par To prove that (C) holds, we write $\omega^n=f\Omega^n$ for some fixed positive (1,1) form $\Omega$ on $X$. Let $x\in\Sigma^{n-1}_{reg}$ and local coordinates $z_1,\dots,z_n$ be chosen so that $x=0$, $\Sigma=\{z_1=0\}$. Estimate \eqref{e:cangood} implies that $f\gtrsim|z_1|^{-2}(\log|z_1|)^{-2\alpha}$ near $x$, where $\alpha\geq1$. Hence $f\geq c_x>0$ $\Omega^n$-a.e. in a neighborhood $U_x$ of each $x\in(X\setminus\Sigma)\cup\Sigma^{n-1}_{reg}$.

\par Condition \eqref{e:mainhyp} holds due to \cite[Th.\,6.1.1]{MM07}, which applies since $iR^{(E,h)}\geq \varepsilon\omega^{\mathcal B}_{U}$ on $U$. By Theorem \ref{T:mt} we infer the conclusion.
\end{proof}

\begin{Lemma}\label{L:pshext} Let $V\subset\mathbb{C}^n$ be an open set and $\Sigma$ be a proper analytic subvariety of $V$. Suppose that $u$ is a psh function on $V\setminus\Sigma$ which verifies 
$$u(z)\leq C_{z_0}\log|\log dist(z,\Sigma)|$$
for $z\in V\setminus\Sigma$ near each point $z_0\in\Sigma_{reg}^{n-1}$, with a constant $C_{z_0}>0$. Then $u$ is locally upper bounded near each point of $\Sigma$ hence it extends to a psh function on $V$. 
\end{Lemma}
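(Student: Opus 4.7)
The plan is to combine a one-variable Hadamard three-circles argument along slices transverse to $\Sigma_{reg}^{n-1}$ with the classical removable singularity theorems for psh functions. First I would establish the local upper bound near each $z_0\in\Sigma_{reg}^{n-1}$ by a slice estimate; then two successive extension steps handle all of $\Sigma$.

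For the slice estimate, fix $z_0\in\Sigma_{reg}^{n-1}$ and choose local coordinates $(z_1,z')\in\Delta_r\times\Delta_s^{n-1}\subset V$ centered at $z_0=0$ in which $\Sigma$ is cut out by $z_1=0$ and the hypothesis reads $u(z)\leq C\log|\log|z_1||$ on $(\Delta_r\setminus\{0\})\times\Delta_s^{n-1}$. The psh function $u$ is upper semicontinuous on the compact set $K=\{|z_1|=r\}\times\overline{\Delta_{s/2}^{n-1}}\subset V\setminus\Sigma$, so $u\leq M$ on $K$ for some finite $M$. For each $z'\in\Delta_{s/2}^{n-1}$, the slice $v(z_1):=u(z_1,z')$ is subharmonic on $\Delta_r^\star=\{0<|z_1|<r\}$, hence Hadamard's three-circles theorem makes $M_v(\rho):=\sup_{|z_1|=\rho}v$ convex in $\log\rho$ on $(0,r)$. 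Thus for any $0<\rho_0<\rho<r$,
$$M_v(\rho)\leq\frac{\log(r/\rho)}{\log(r/\rho_0)}\,M_v(\rho_0)+\frac{\log(\rho/\rho_0)}{\log(r/\rho_0)}\,M_v(r)\leq\frac{\log(r/\rho)}{\log(r/\rho_0)}\,C\log(-\log\rho_0)+M.$$
Letting $\rho_0\to 0^+$ and using the quantitative fact $\log(-\log\rho_0)/(-\log\rho_0)\to 0$, the first term vanishes, leaving $M_v(\rho)\leq M$ for every $\rho\in(0,r)$ and every admissible $z'$. Hence $u\leq M$ on $(\Delta_r\times\Delta_{s/2}^{n-1})\setminus\Sigma$, i.e.\ $u$ is locally upper bounded near $z_0$.

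For the extension, set $\Sigma':=\Sigma\setminus\Sigma_{reg}^{n-1}$. Since $\Sigma_{reg}^{n-1}$ is open in $\Sigma$, $\Sigma'$ is the closed analytic subvariety of $V$ consisting of the singular locus of $\Sigma$ together with the irreducible components of $\Sigma$ of dimension $\leq n-2$; in particular $\codim\Sigma'\geq 2$. The classical removable singularity theorem for psh functions locally bounded above across an analytic set then extends $u$ uniquely to a psh function $\widetilde u$ on $V\setminus\Sigma'$. Since $\codim\Sigma'\geq 2$, the Grauert--Remmert extension theorem (psh functions extend across analytic sets of codimension $\geq 2$ with no growth hypothesis) produces a unique psh extension of $\widetilde u$ to all of $V$, from which local upper boundedness at the points of $\Sigma'$ is automatic.

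The main obstacle is the slice estimate: the bound $C\log|\log|z_1||$ is itself unbounded as $z\to\Sigma$, so no direct extraction of an upper bound is possible. What rescues the argument is that the iterated logarithm is negligible compared to the log-radius, so Hadamard convexity of the circular maxima propagates the finite bound available on $\{|z_1|=r\}$ inwards across the entire punctured disc; the two extension steps afterwards are routine applications of classical pluripotential theory.
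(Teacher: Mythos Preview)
Your proof is correct and follows essentially the same route as the paper: reduce to the smooth hypersurface locus, slice transversally, and exploit the convexity of $\rho\mapsto\max_{|z_1|=\rho}u(z_1,z')$ in $\log\rho$ together with the sublinear growth $\log|\log\rho|=o(|\log\rho|)$. The paper phrases the last step slightly more economically---it observes that a convex function of $\log\rho$ bounded above by $C\log|\log\rho|$ as $\rho\to0^+$ must be nondecreasing in $\rho$, whence $M_v(\rho)\leq M_v(r)$ directly---but this is equivalent to your limiting three-circles inequality; your two-step extension across $\Sigma_{reg}^{n-1}$ and then across $\Sigma'=\Sigma_{\mathrm{sing}}\cup\{\text{components of dimension}\leq n-2\}$ makes explicit what the paper's ``it suffices'' leaves implicit. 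One cosmetic point: your compact set $K=\{|z_1|=r\}\times\overline{\Delta_{s/2}^{n-1}}$ sits on the boundary of the chosen polydisc, so you should take the chart slightly larger or work with a radius $r'<r$.
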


\begin{proof} It suffices to show that $u$ is locally upper bounded near each point $z_0\in\Sigma_{reg}^{n-1}$. We may assume that $z_0=0$, $\Sigma=\{z_1=0\}\subset V=\mathbb{D}^n$ and that 
$u(z_1,z')\leq C\log|\log|z_1||$ for $z\in V$ with $0<|z_1|<e^{-1}$, where $C>0$ is a constant. The function $u(\cdot,z')$ is subharmonic on $\mathbb{D}\setminus\{0\}$, so $r\to \max_{|z_1|=r}u(z_1,z')$ is a convex function of $\log r$ for $r>0$. The above upper bound on $u$ implies that this function is also increasing, so $u$ is upper bounded in a neighborhood of $z_0=0$.
\end{proof}

\subsection{Remark on adjoint bundles}\label{SS:rmad} 
Let us consider the following setting:

\smallskip

(A$^\prime$) $X$ is a complex manifold of dimension $n$ (not necessarily compact), $\Sigma$ is a compact analytic subvariety of $X$, and $\Omega$ is a smooth positive $(1,1)$ form on $X\setminus\Sigma$.

\smallskip

(B$^\prime$) $(L,h)$ is a holomorphic line bundle on $X$ with a singular (semi)positive hermitian metric $h$ which is continuous on $X\setminus\Sigma$.

\smallskip

\par Consider the space $H^0_{(2)}(X\setminus\Sigma,L^p\otimes K_X)$ of $L^2$-holomorphic sections of $L^p\otimes K_X\mid_{_{X\setminus\Sigma}}$ relative to the metric $h_p$ induced by $h$ and the volume form  $\Omega^n$ on $X\setminus\Sigma$, endowed with the inner product
$$(S,S^{\,\prime})_p=\int_{X\setminus\Sigma}\langle S,S^{\,\prime}\rangle_{h_p,\Omega}\,\Omega^n\,,\;\;S,S^{\,\prime}\in H^0_{(2)}(X\setminus\Sigma,L^p\otimes K_X).$$
The interesting point is that the space $H^0_{(2)}(X\setminus\Sigma,L^p\otimes K_X)$ does not depend on the choice of the form $\Omega$. Indeed, for any $(n,0)$-form $S$ with values in $L^p$, and any metrics $\Omega$, $\Omega_1$ on $X\setminus\Sigma$, we have pointwise $\vert S\vert_{h_p,\Omega}^2\Omega^n=\vert S\vert_{h_p,\Omega_1}^2\Omega_1^n$. Therefore, we can take $\Omega$ to be a smooth positive (1,1) form on $X$. Then Skoda's lemma \cite[Lemma\,2.3.22]{MM07} shows that sections in $H^0_{(2)}(X\setminus\Sigma,L^p\otimes K_X)$ extend holomorphically to $X$, thus $H^0_{(2)}(X\setminus\Sigma,L^p\otimes K_X)\subset H^0(X,L^p\otimes K_X)$. Using an orthonormal basis of the space $H^0_{(2)}(X\setminus\Sigma,L^p\otimes K_X)$ we define the Bergman kernel function $P_{p}$ and the Fubini-Study currents $\gamma_{p}$ as in \eqref{e:Bergfcn}\,-\eqref{e:gammap}.

Proceeding as above for the proof of Theorem \ref{T:mt} we obtain the following. Under conditions (A$^\prime$)-(B$^\prime$) and assuming \eqref{e:mainhyp} we have $\frac{1}{p}\,\gamma_p\to\gamma$ weakly on $X$. If, in addition, $\dim\Sigma\leq n-k$ for some $2\leq k\leq n$, then the currents $\gamma^k$ and $\gamma_p^k$ are well defined on $X$, respectively on each relatively compact neighborhood of $\Sigma$, for all $p$ sufficiently large. Moreover, $\frac{1}{p^k}\,\gamma_p^k\to\gamma^k$ weakly on $X$.

\subsection{$1$-convex manifolds} \label{SS:1conv}
A complex manifold $X$ is called \emph{$1$-convex} if there exists a smooth exhaustion function $\psi:X\to\mathbb{R}$ which is strictly psh outside a compact set of $X$. This is equivalent to the following condition (see e.\,g.\ \cite{AG:62}): There exists a Stein space $Y$, a proper holomorphic surjective map $\rho : X\to Y$ satisfying $\rho_\star\mathcal{O}_X = \mathcal{O}_Y$ , and a finite set $A\subset Y$ such that the induced map $X\setminus\rho^{-1}(A) \to Y\setminus A$ is biholomorphic. The Stein space $Y$ is called the Remmert reduction of $X$ and $\Sigma:=\rho^{-1}(A)$ the exceptional set of $X$.

\par Consider a strictly psh smooth exhaustion function $\varphi_Y$ of $Y$, such that $\varphi_Y\geq0$ and $\{\varphi_Y=0\}=A$. Then $\varphi=\varphi_Y\circ\rho$ is a smooth psh exhaustion function of $X$, such that $\varphi\geq0$, $\{\varphi=0\}=\Sigma$ and $\varphi$ is strictly psh on $X\setminus\Sigma$.

\par We consider in the sequel a holomorphic line bundle $(L,h)$ on $X$ with singular metric $h$, which is smooth outside the exceptional set $\Sigma$ and has strictly positive curvature current in a neighborhood $U$ of $\Sigma$. By using a modification $\widetilde{X}$ of $X$ we construct as in Section \ref{SS:metrics} the Poincar\'e metric $\Theta$ on $X\setminus\Sigma$ and also the metric $h_{\varepsilon}$ on $L\mid_{_{X\setminus\Sigma}}$. We may suppose that $\Theta$ is complete on $X\setminus\Sigma$ (the metric $\widetilde{\Omega}$ on $\widetilde{X}$ may be taken to be complete, by setting $\widetilde{\Omega}=\Psi e^{\eta}$, where $\Psi$ is an arbitrary metric on $\widetilde{X}$ and $\eta$ is a fast increasing function at infinity).

\par Let us consider a convex increasing function $\chi:\mathbb{R}\to\mathbb{R}$ and endow $L$ with the Hermitian metric $h_{\varepsilon}e^{-\chi(\varphi)}$. Consider the $L^2$ inner product on the space $\Omega^{0,*}_0(X\setminus\Sigma,L^p)$ of sections with compact support, induced by the metrics $h_{\varepsilon}e^{-\chi(\varphi)}$ on $L$ and $\Theta$ on $X\setminus\Sigma$. Set
\begin{gather*}
L^2_{0,*}(X\setminus\Sigma,L^p):=L^2_{0,*}(X\setminus\Sigma,L^p,h_{\varepsilon}e^{-\chi(\varphi)},\Theta^n)\,,\\
H^0_{(2)}(X\setminus\Sigma,L^p):=L^2_{0,0}(X\setminus\Sigma,L^p)\cap\ker\overline\partial\,.
\end{gather*}
We denote by $\db^{\,*}_{\chi}$ and $\square_{p,\chi}$ the adjoint of $\db$ with respect to this 
$L^2$ inner product and the corresponding Kodaira Laplace operator. 

\par Let us denote by $\mathcal{T}=[i(\Theta),\partial\Theta]$ the Hermitian torsion of the Poincar\'e metric $\Theta$. Set $\widetilde{L^p}=L^p\otimes K^*_X$. There exists a natural isometry
\begin{equation*}
\begin{split}
&\Psi=\thicksim\,:\Lambda^{0,q}(T^*X)\otimes L^p
\longrightarrow\Lambda^{n,q}(T^*X)\otimes \widetilde{L^p},\\
&\Psi \, s=\widetilde s=(w^1\wedge\ldots\wedge w^n\wedge s)\otimes
(w_1\wedge\ldots\wedge w_n),
\end{split}
\end{equation*}
where $\{w_j\}^n_{j=1}$ is a local orthonormal frame of $T^{(1,0)}X$ and $\{w^j\}^n_{j=1}$ is the dual frame. The Bochner-Kodaira-Nakano formula \cite[Cor.\,1.4.17]{MM07} shows that for any $s\in \Omega^{0,1}_0(X\setminus\Sigma,L^p)$ we have
\begin{equation} \label{e:bkn}
\begin{split}
\frac{3}{2}\left(\norm{\db s}^2+\norm{\db^{\,*}_{\chi}s}^2\right)
\geq &\,
\left( R^{L^p\otimes K^*_X}(w_j,\ov{w}_k)\ov{w}^k\wedge i_{\ov{w}_j}s,
s\right)\\
&-\frac{1}{2}\big(\norm{\mathcal{T}^*\wi{s}}^2
+\norm{\ov{\mathcal{T}}\wi{s}}^2 
+\norm{\ov{\mathcal{T}}^{\,*}\wi{s}}^2\big).
\end{split}
\end{equation}
Set $T=\frac12(\mathcal{T}\,\mathcal{T}^*+\ov{\mathcal{T}}^{\,*}\,\ov{\mathcal{T}}+\ov{\mathcal{T}}\,\ov{\mathcal{T}}^{\,*})$. 
Define the continuous function
\begin{equation}\label{e:tau}
\tau:X\setminus\Sigma\to\mathbb{R}\,,\quad
\tau(x)=\sup\big\{\langle T\alpha,\alpha\rangle/\langle\alpha,\alpha\rangle : \alpha\in\Lambda^{n,1}T^{*}_xX\setminus\{0\}\big\}\,.
\end{equation}
Then 
for any $x\in X\setminus\Sigma$, $p\in\mathbb{N}$ and $\alpha\in L^p_x\otimes\Lambda^{n,1}T^{*}_xX$
we have
\[
\langle T\alpha,\alpha\rangle\leq\tau(x)\langle\alpha,\alpha\rangle\,.
\]
Hence \eqref{e:bkn} gives for all $s\in \Omega^{0,1}_0(X\setminus\Sigma,L^p)$
\begin{equation} \label{e:bkn1}
\frac{3}{2}\left(\norm{\db s}^2+\norm{\db^{\,*}_{\chi}s}^2\right)
\geq \,
\left( R^{L^p\otimes K^*_X}(w_j,\ov{w}_k)\ov{w}^k\wedge i_{\ov{w}_j}s\,,
s\right)-\int_{X\setminus\Sigma}\tau(x)|s|^2\,.
\end{equation}

\begin{Lemma}\label{l:herm} 
There exists an increasing convex function $\chi:\mathbb{R}\to\mathbb{R}$ and constants $a,b>0$, such that:
\begin{subequations}
  \begin{equation}\label{e:es1}	
  c_1(L,h_{\varepsilon}e^{-\chi(\varphi)})\geq a\Theta\,,
  \end{equation}
  \begin{equation}\label{e:es2}	
	c_1(L,h_{\varepsilon}e^{-\chi(\varphi)})+iR^{K_X^*}-\tau\Theta\geq -b\Theta\,,
\end{equation}
	\end{subequations}
on $X\setminus\Sigma$.
\end{Lemma}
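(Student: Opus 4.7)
The plan is to compute
$$c_1(L, h_\varepsilon e^{-\chi(\varphi)}) = c_1(L, h_\varepsilon) + \chi''(\varphi)\, d\varphi \wedge d^c\varphi + \chi'(\varphi)\, dd^c\varphi$$
and exploit two sources of positivity: the construction in Section \ref{SSS:heps} makes $c_1(L,h_\varepsilon)$ strictly positive near $\Sigma$, while $\chi'(\varphi)\, dd^c\varphi$ can be forced arbitrarily large away from $\Sigma$ since $\varphi$ is strictly psh off $\Sigma$ and $\chi$ is at our disposal. Both terms involving $\chi$ are nonnegative by convexity and monotonicity, so dropping $\chi''(\varphi)\, d\varphi\wedge d^c\varphi$ costs nothing.

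For the near-$\Sigma$ region, after rescaling $\varphi$ I would arrange that $V := \{\varphi < c_0\}\subset\subset U$ for some $c_0>0$, so that the construction of Section \ref{SSS:heps} (which uses only the strict positivity of $c_1(L,h)$ on $U$ and the compactness of $\Sigma$, both valid here) yields $\eta>0$ with $c_1(L, h_\varepsilon) \geq \eta\Theta$ on $V\setminus\Sigma$. The Poincar\'e-type metric $\Theta$ is, near the normal crossings divisor $\pi^{-1}(\Sigma)$, quasi-isometric to a product of standard Poincar\'e metrics with a smooth K\"ahler metric; a direct computation on this model shows that the torsion function $\tau$ from \eqref{e:tau} and the $\Theta$-norm of $iR^{K_X^*}$ are bounded by some $C_0$ on $V\setminus\Sigma$. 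Since $dd^c\chi(\varphi)\geq 0$, estimate \eqref{e:es1} then holds on $V\setminus\Sigma$ with $a=\eta$, and \eqref{e:es2} with any $b\geq 2C_0$.

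For the outer region, set $V_0:=\{\varphi<c_0/2\}\subset\subset V$; on $X\setminus V_0$ the function $\varphi\geq c_0/2$ is strictly psh and each sublevel set $\{\varphi\leq m\}$ is compact, since $X$ is $1$-convex. On the compact piece $K_m:=\{c_0/2\leq\varphi\leq m\}$, continuity yields constants $\delta_m,M_m>0$ with $dd^c\varphi\geq\delta_m\Theta$ and $c_1(L,h_\varepsilon)+iR^{K_X^*}-\tau\Theta\geq-M_m\Theta$. Choosing a smooth convex increasing $\chi:\mathbb R\to\mathbb R$ with
$$\chi'(t)\geq \frac{a+b+M_{\lceil t\rceil+1}}{\delta_{\lceil t\rceil+1}}\quad\text{for } t\geq c_0/2,$$
which is possible because the right-hand side is a locally bounded function of $t$, the nonnegative term $\chi'(\varphi)\,dd^c\varphi$ dominates on each annulus $K_{m+1}\setminus K_m$ and yields both estimates there. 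Since $V$ and $X\setminus V_0$ cover $X\setminus\Sigma$, combining the two regions gives \eqref{e:es1} and \eqref{e:es2} globally.

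The main obstacle is the near-$\Sigma$ verification that $\tau$ and $|iR^{K_X^*}|_\Theta$ are bounded. This requires passing to $\widetilde X$, where $\Theta$ pulls back to $\widetilde\Omega+\delta\,dd^cF$ with $F$ as in \eqref{e:F}, and tracking the torsion and Ricci curvature through the logarithmic singularities along $\pi^{-1}(\Sigma)$. Once that local model calculation is in place, the global construction of $\chi$ is a routine inductive adjustment.
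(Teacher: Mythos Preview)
Your proposal is correct and follows essentially the same route as the paper: both split $X\setminus\Sigma$ into a neighborhood of $\Sigma$ contained in $U$ (where $c_1(L,h_\varepsilon)\geq\eta\Theta$ and the boundedness of $\tau$ and $iR^{K_X^*}$ with respect to $\Theta$ is a local Poincar\'e-model computation, handled in the paper by a reference to \cite[Lemma 6.2.1]{MM07}) and its complement (where $\varphi$ is strictly psh and $\chi$ is chosen to grow fast enough). Your explicit annular exhaustion for building $\chi$ is just a spelling-out of what the paper summarizes as ``choose $\chi$ increasing fast enough''; apart from a harmless missing factor $\tfrac12$ in the curvature formula $c_1(L,h_\varepsilon e^{-\chi(\varphi)})=c_1(L,h_\varepsilon)+\tfrac12\,dd^c\chi(\varphi)$, the arguments coincide.
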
  

\begin{proof}
We have 
\[
c_1(L,h_{\varepsilon}e^{-\chi(\varphi)})=c_1(L,h_{\varepsilon})+\frac{1}{2}\,dd^c\chi(\varphi)=c_1(L,h_{\varepsilon})+\frac{i}{2\pi}\,(\chi^\prime(\varphi)\partial\overline\partial\varphi+\chi^{\prime\prime}(\varphi)\partial\varphi\wedge\overline\partial\varphi).
\] 
Since $\varphi$ is psh, for any increasing convex function $\chi$ this is $\geq c_1(L,h^L)$, hence positive on $U$. Thus \eqref{e:es1} holds on $U$ by construction of $h_{\varepsilon}$. Moreover,
\cite[Lemma\,6.2.1]{MM07} shows that $iR^{K_X^*}$ and the torsion operators of $\Theta$, hence $\tau$, are bounded with respect to $\Theta$ on $U$. Thus \eqref{e:es2} also holds on $U$, thanks to \eqref{e:es1}.
  
\par Consider $c>0$ such that $\Sigma\subset X_c\Subset U$, where $X_c:=\{\varphi<c\}$. Note that $\varphi$ is strictly psh outside $X_c$. Thus we we can choose $\chi$ increasing fast enough such that
\eqref{e:es1}\,-\eqref{e:es2} are satisfied on $X\setminus\ov{X}_c$.
\end{proof}

\begin{Lemma} \label{l:herm1}
Let $\chi:\mathbb{R}\to\mathbb{R}$ be as in Lemma \ref{l:herm}. Then:
\\[2pt]
(i) There exist constants
$a_1,b_1>0$ such that for any $p\in\mathbb{N}$  we have
\begin{equation}\label{bk1.411}
\norm{\db s}^2+\norm{\db^{\,*}_{\chi}s}^2\geq (p\,a_1-a_1-b_1)\|s\|^2\,,\quad s\in\Dom(\db)\cap\Dom(\db^{\,*}_{\chi})\cap L^2_{0,1}(X\setminus\Sigma,L^p).
\end{equation}
(ii) The spectrum of $\square_{p,\chi}$ on $L^2_{0,0}(X\setminus\Sigma,L^p)$ satisfies
\begin{equation}\label{e:spgap}
\operatorname{Spec}(\square_{p,\chi})\subset \{0\}\cup\, (p\,a_1-a_1-b_1,+\infty).
\end{equation}
(iii) 
The Bergman kernel function $P_p$ of $H^0_{(2)}(X\setminus\Sigma,L^p)$ has a full asymptotic expansion  
on any compact set of $X\setminus\Sigma$.
\end{Lemma}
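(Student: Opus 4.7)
My plan is to prove (i) directly from the Bochner--Kodaira--Nakano estimate \eqref{e:bkn1}, deduce (ii) via a short spectral-theoretic argument, and finally obtain (iii) from the standard spectral-gap Bergman kernel machinery. For (i), I would apply \eqref{e:bkn1} to a smooth compactly supported $s\in\Omega^{0,1}_0(X\setminus\Sigma,L^p)$. Under the isometry $\Psi$, on $(n,1)$-forms the curvature operator of $L^p\otimes K_X^*$ with respect to $\Theta$ is pointwise bounded below by the smallest eigenvalue of $c_1(L^p,(h_\varepsilon e^{-\chi(\varphi)})^p)+c_1(K_X^*,h^{K_X^*})$ relative to $\Theta$. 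Splitting this as $(p-1)c_1(L,h_\varepsilon e^{-\chi(\varphi)})+[c_1(L,h_\varepsilon e^{-\chi(\varphi)})+c_1(K_X^*,h^{K_X^*})]$ and invoking Lemma \ref{l:herm} \eqref{e:es1}--\eqref{e:es2} bounds this below by $((p-1)a+\tau-b)|s|^2$. The $\tau|s|^2$ part then exactly cancels the torsion correction in \eqref{e:bkn1}, leaving $\tfrac{3}{2}(\norm{\db s}^2+\norm{\db^{\,*}_\chi s}^2)\geq((p-1)a-b)\norm{s}^2$. This is \eqref{bk1.411} with $a_1=2a/3$ and $b_1=2b/3$. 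Since $\Theta$ is complete on $X\setminus\Sigma$, the Andreotti--Vesentini density lemma (cf.\ \cite[Lemma 3.3.1]{MM07}) propagates the estimate to all of $\Dom(\db)\cap\Dom(\db^{\,*}_\chi)$.

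For (ii), on $(0,0)$-forms $\square_{p,\chi}=\db^{\,*}_\chi\db$, so $\ker\square_{p,\chi}=H^0_{(2)}(X\setminus\Sigma,L^p)$. Suppose $u\in\Dom(\square_{p,\chi})$ is orthogonal to $\ker\square_{p,\chi}$ and lies in the spectral subspace $E_{(0,\mu]}(\square_{p,\chi})$ for some $\mu<pa_1-a_1-b_1$. Set $s:=\db u\in L^2_{0,1}(X\setminus\Sigma,L^p)$; then $\db s=0$, $\db^{\,*}_\chi s=\square_{p,\chi}u$, and (i) applied to $s$ gives $\norm{\square_{p,\chi}u}^2\geq(pa_1-a_1-b_1)(\square_{p,\chi}u,u)$. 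On the other hand, the spectral hypothesis forces $\norm{\square_{p,\chi}u}^2\leq\mu(\square_{p,\chi}u,u)$. Combining yields $(\square_{p,\chi}u,u)=\norm{\db u}^2=0$, so $u\in\ker\square_{p,\chi}$ and hence $u=0$. This proves \eqref{e:spgap}.

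For (iii), I would feed \eqref{e:spgap} into the localization machinery of \cite[Ch.\,4--6]{MM07}. Fix a relatively compact neighborhood $V$ of a compact $K\Subset X\setminus\Sigma$ on which $h_\varepsilon e^{-\chi(\varphi)}$ and $\Theta$ are smooth with uniformly positive curvature. Choose a Schwartz function $f$ that equals $1$ near $0$ and is supported in $(-\infty,pa_1-a_1-b_1)$; by \eqref{e:spgap}, $f(\square_{p,\chi})$ coincides with the Bergman projection onto $H^0_{(2)}(X\setminus\Sigma,L^p)$ for $p$ large, while $(\mathrm{id}-f)(\square_{p,\chi})$ is $O(p^{-\infty})$ in any Sobolev norm. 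Finite propagation speed of $\cos(t\sqrt{\square_{p,\chi}})$ then localizes the Schwartz kernel of $f(\square_{p,\chi})$ to $V$ modulo $O(p^{-\infty})$, reducing the expansion on $K$ to the model local expansion for a positive line bundle on $\mathbb{C}^n$ (\cite[Thm.\,4.2.1]{MM07}). The principal obstacle is the uniform control of Sobolev constants and elliptic estimates in the noncompact setting; this is handled exactly as in the proof of \cite[Thm.\,6.1.1]{MM07}, Lemma \ref{l:herm} providing the geometric input needed to make the argument run uniformly near $\Sigma$.
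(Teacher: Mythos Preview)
Your proposal is correct and follows essentially the same route as the paper: for (i) you combine \eqref{e:bkn1} with \eqref{e:es1}--\eqref{e:es2} on compactly supported forms and extend by Andreotti--Vesentini density, for (ii) you give the standard spectral argument (the paper simply cites \cite[Th.\,6.1.1]{MM07}), and for (iii) you invoke the spectral-gap localization of \cite[\S4.1.2, Th.\,4.2.1]{MM07}, exactly as the paper does. One minor remark: since the asymptotic expansion in (iii) is only asserted on compact subsets of $X\setminus\Sigma$, you do not actually need uniform Sobolev or elliptic control near $\Sigma$---the global spectral gap \eqref{e:spgap} plus finite propagation speed already localize the kernel to a neighborhood of $K$ where all data are smooth, so your final sentence overstates the obstacle.
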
  

\begin{proof} (i) Since $\Omega^{0,1}_0(X\setminus\Sigma,L^p)$
is dense in $\Dom(\db)\cap\Dom(\db^{\,*}_{\chi})\cap L^2_{0,1}(X\setminus\Sigma,L^p)$ (Andreotti-Vesentini density lemma, see \cite[Lemma\,3.3.1]{MM07}) it suffices to prove \eqref{bk1.411} for $s\in\Omega^{0,1}_0(X\setminus\Sigma,L^p)$. But in this case, \eqref{bk1.411} follows immediately from \eqref{e:bkn1} and \eqref{e:es1}\,-\eqref{e:es2}. 
\\[2pt]
(ii) Once we have \eqref{bk1.411}, the assertion about the spectrum of $\square_{p,\chi}$ on $L^2_{0,0}(X\setminus\Sigma,L^p)$ follows as in the proof
of \cite[Th.\,6.1.1]{MM07}.
\\[2pt]
(iii) Since the Kodaira Laplacian $\square_{p,\chi}$ on $L^2_{0,0}(X\setminus\Sigma,L^p)$ has a
spectral gap, by the argument in \cite[\S 4.1.2]{MM07}, we can localize the problem,
and we obtain the result from \cite[Th.\,4.2.9]{MM07}, as in the proof of \cite[Th.\,4.2.1]{MM07}.
\end{proof}

\begin{Theorem}\label{T:1conv}
Let $X$ be a $1$-convex manifold and $(L,h)$ be a holomorphic line bundle on $X$ with singular metric $h$. Assume that $h$ is smooth outside the exceptional set $\Sigma$ and that it has strictly positive curvature current in a neighborhood of $\Sigma$. Let $\Theta$ be a complete Poincar\'e metric on $X$, $h_{\varepsilon}$ be constructed in Section \ref{SS:metrics}, and let $\chi$ be as in Lemma \ref{l:herm}. The conclusions of Theorems \ref{T:mt}, \ref{T:SZ} and \ref{T:rs} hold for the spaces $H^0_{(2)}(X\setminus\Sigma,L^p,h_{\varepsilon}e^{-\chi(\varphi)},\Theta^n)$ and for $\omega=c_1(L,h_{\varepsilon}e^{-\chi(\varphi)})$. 
\end{Theorem}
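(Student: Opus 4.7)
The plan is to verify the hypotheses of Theorem \ref{T:mt} in this non-compact setting (which yields the current-convergence statements), and then to adapt the arguments of Section \ref{S:SZ} to obtain the Shiffman--Zelditch and random-zero statements despite the non-compactness of $X$.

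First I would check assumptions (A), (B), (C) of the introduction for the data $(X,\Sigma,L,h_{\varepsilon}e^{-\chi(\varphi)},\Theta^n)$. Assumption (A) holds with $\Sigma$ the compact exceptional set of the Remmert reduction and $\Omega$ any fixed smooth positive $(1,1)$ form on $X$. For (B), Lemma \ref{l:herm} shows that the metric $h_{\varepsilon}e^{-\chi(\varphi)}$, which is continuous on $X\setminus\Sigma$, satisfies $\omega = c_1(L,h_{\varepsilon}e^{-\chi(\varphi)}) \geq a\Theta > 0$; the extension across $\Sigma$ as a positively curved singular metric follows from Lemma \ref{L:hext} applied to $h_\varepsilon$ (whose extension is already part of the construction in \S\ref{SS:metrics}), combined with the fact that $\chi(\varphi)$ is a smooth psh function on all of $X$. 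Assumption (C) is the standard fact recalled at the end of \S\ref{SSS:Theta}: writing $\Theta^n = f\Omega^n$, the density $f$ has the required lower bound near $X\setminus\Sigma$ and near $\Sigma^{n-1}_{reg}$, owing to the local asymptotics $\Theta^n \sim (|z_1|\log|z_1|)^{-2}d\lambda$.

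Second, I would verify the key hypothesis \eqref{e:mainhyp}. This is the crux input and it is provided by Lemma \ref{l:herm1}(iii): the Bergman kernel $P_p$ admits a full asymptotic expansion on compact subsets of $X\setminus\Sigma$, of the form $P_p(x) = b_0(x)p^n + O(p^{n-1})$ with $b_0(x) > 0$. Taking logarithms and dividing by $p$ gives $\tfrac{1}{p}\log P_p \to 0$ locally uniformly on $X\setminus\Sigma$, which is precisely \eqref{e:mainhyp}. With (A)--(C) and \eqref{e:mainhyp} in place, Theorem \ref{T:mt} applies verbatim and yields both $\tfrac{1}{p}\gamma_p \to \omega$ weakly on $X$ and, when $\dim\Sigma \leq n-k$, the higher-order convergence $\tfrac{1}{p^k}\gamma_p^k \to \omega^k$ for all large $p$.

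The main obstacle is extending the random-zero results to the non-compact $X$, since Theorems \ref{T:SZ} and \ref{T:rs} were stated under the assumption that $X$ is compact. Compactness enters Section \ref{S:SZ} in two essential ways: to make sense of the unit sphere $\mathcal{S}^p \subset H^0_{(2)}(X\setminus\Sigma,L^p)$ as a finite-dimensional object carrying a probability measure, and to invoke support-theorem arguments for positive closed currents on $X$ (as in Step 2 of the proof of Proposition \ref{P:Bertini} and in the final passage of the proof of Theorem \ref{T:SZ}). For the first, I would observe that the exponential weight $e^{-\chi(\varphi)}$, together with the spectral gap \eqref{e:spgap} for $\square_{p,\chi}$ on $L^2_{0,0}(X\setminus\Sigma,L^p)$, forces $H^0_{(2)}(X\setminus\Sigma,L^p,h_\varepsilon e^{-\chi(\varphi)},\Theta^n)$ to be finite dimensional, so $\mathcal{S}^p$ and $\lambda_p$ are well defined. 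For the second, since $\Sigma$ is compact and \eqref{e:mainhyp} holds, the argument of Lemma \ref{L:gammapk} shows that for $p$ large the base locus of $H^0_{(2)}$, and hence the zero locus of any $L^2$-section, lies in an arbitrarily small neighborhood of $\Sigma$; in particular, all the positive closed currents that arise in the Bertini and Shiffman--Zelditch arguments have support contained in a fixed compact set near $\Sigma$. With this localization, the support-theorem and mass-comparison arguments of Section \ref{S:SZ} go through unchanged, giving the conclusions of Theorems \ref{T:SZ} and \ref{T:rs} in the 1-convex setting.
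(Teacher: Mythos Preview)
Your verification of assumptions (A)--(C) and of hypothesis \eqref{e:mainhyp} via Lemma \ref{l:herm1}(iii) is exactly what the paper does; the paper's entire proof is the one-line observation that these conditions hold by construction, after which Theorem \ref{T:mt} applies.

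Where you go further than the paper is in attempting to justify why Theorems \ref{T:SZ} and \ref{T:rs}, which are stated under the standing hypothesis that $X$ is compact, should nonetheless apply in the $1$-convex setting. Your argument for this contains two genuine errors. First, the spectral gap \eqref{e:spgap} does \emph{not} force $H^0_{(2)}(X\setminus\Sigma,L^p)$ to be finite dimensional: a spectral gap separates $0$ from the rest of the spectrum of $\square_{p,\chi}$ but imposes no constraint on the multiplicity of the eigenvalue $0$. In fact, on a typical $1$-convex manifold (for instance the blow-up of ${\mathbb C}^n$ at a point, with a Bargmann--Fock-type weight $e^{-\chi(\varphi)}$ at infinity) the space $H^0_{(2)}$ is infinite dimensional, so the probability measure $\lambda_p$ on the unit sphere ${\mathcal S}^p$ is not even defined in the sense of Section \ref{S:SZ}. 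Second, your claim that ``the zero locus of any $L^2$-section lies in an arbitrarily small neighborhood of $\Sigma$'' is false: hypothesis \eqref{e:mainhyp} (via the proof of Lemma \ref{L:gammapk}) only controls the \emph{base locus}, i.e.\ the common zeros of all sections, not the zero set of an individual section, which can lie anywhere in $X$. Hence currents of the form $[\sigma=0]$ are not supported near $\Sigma$, and your proposed localization of the support-theorem arguments does not go through.

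The paper itself does not address this point; its proof invokes only the hypotheses of Theorem \ref{T:mt}. So you are right to flag the issue, but your proposed resolution does not work. A correct extension would require either an independent finite-dimensionality argument (not available here in general) or a reformulation of the random-section framework, e.g.\ via Gaussian measures on the infinite-dimensional Hilbert space $H^0_{(2)}$, neither of which you supply.
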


\begin{proof}
Conditions (A)\,-(C) are satisfied by construction and condition \eqref{e:mainhyp} follows from Lemma \ref{l:herm1}. 
\end{proof}

\subsection{Strongly pseudoconvex domains}\label{SS:spscd}
We give now a variant `with boundary' of the previous result. Let $M$ be a complex manifold and let $X\Subset M$ be a strongly pseudoconvex domain with smooth boundary. We consider a defining function $\varrho\in\mathcal{C}^\infty (M,\mathbb{R})$ of $X$, i.e., $X=\{x\in M:\varrho(x)<0\}$ and $d\varrho\neq0$ on $\partial X$. Since $X$ is strongly pseudoconvex, the Levi form of $\varrho$ is positive definite on the complex tangent space to $\partial X$. It is well-known that one can modify the defining function $\varrho$ such that in a neighborhood of $\partial X$, $\varrho$ is strictly psh and $d\varrho\neq0$. Thus, for $c\geq0$ small enough, $X_c=\{x\in M:\varrho(x)<c\}$ is strongly pseudoconvex.  

\par Let $\eta_c:(-\infty,c)\to\mathbb{R}$ be a convex increasing function such that $\eta_c(t)\to\infty$, as $t\to c$. Then $\eta_c\circ\varrho$ is an exhaustion function for $X_c$, which is strictly psh outside a compact set of $X_c$. Therefore $X_c$ is a $1$-convex manifold. 

\par Let $\Sigma$ be the exceptional set of $X_c$ (it is the same exceptional set as for $X$) and let $\varphi:X_c\to\mathbb{R}$ be a smooth psh exhaustion function of $X_c$, such that $\varphi\geq0$,
$\{\varphi=0\}=\Sigma$ and $\varphi$ is strictly psh on $X_c\setminus\Sigma$.

\par Let $(L,h)$ be a holomorphic line bundle on $M$ with singular metric $h$ which is smooth outside the exceptional set $\Sigma$ and which has strictly positive curvature current in a neighborhood $U$ of $\Sigma$. By using a modification $\widetilde{M}$ of $M$ we construct as in Section \ref{SS:metrics} the Poincar\'e metric $\Theta$ on $M\setminus\Sigma$ and also the metric $h_{\varepsilon}$ on $L\mid_{_{M\setminus\Sigma}}$.

\par Let $A>0$. On the space $\Omega^{0,*}_0(\overline{X}\setminus\Sigma,L^p)$ of sections with compact support in $\overline{X}\setminus\Sigma$ we introduce the $L^2$ inner product with respect to the metrics $\Theta$ and $h_{\varepsilon}e^{-A\varphi}$ and set 
\begin{gather*}
L^2_{0,*}(X\setminus\Sigma,L^p):=L^2_{0,*}(X\setminus\Sigma,L^p,h_{\varepsilon}e^{-A\varphi},\Theta^n)\,,\\
H^0_{(2)}(X\setminus\Sigma,L^p):=L^2_{0,0}(X\setminus\Sigma,L^p)\cap\ker\overline\partial\,.
\end{gather*}
We consider the $L^2$ $\db$-Neumann problem on $\overline{X}\setminus\Sigma$ and show that the $\db$-Neumann Laplacian on $L^2_{0,1}(X\setminus\Sigma,L^p)$ has a spectral gap. Here we work with $\db$-Neumann boundary conditions at the end $\partial X$ of $\overline{X}\setminus\Sigma$
and with a complete metric at the end corresponding to $\Sigma$. This kind of analysis was already used in \cite{MD02} in connection to the compactification of hyperconcave manifolds. 

\par We denote by $\overline\partial^{\,*}=\overline\partial^{L^p,*}$ the Hilbert space adjoint of the maximal extension of $\overline\partial$ on $L^2_{0,1}(X\setminus\Sigma,L^p)$. Integration by parts \cite[Prop. 1.3.1--2]{FK:72} yields 
\begin{eqnarray*}
B^{0,1}(\overline{X}\setminus\Sigma,L^p)&:=&\{s\in\Omega^{0,1}_0(\overline{X}\setminus\Sigma,L^p):
*\partial\varrho\wedge*s=0\, \, \, \text{on}\,\,\partial X\}\\
&=&\Dom(\overline\partial^{\,*})\cap\Omega^{0,1}_0(\overline{X}\setminus\Sigma,L^p)\,.
\end{eqnarray*}
The space $B^{0,1}(\overline{X}\setminus\Sigma,L^p)$ is dense in $\Dom(\db)\cap\Dom(\overline\partial^{\,*})$ with respect to the graph norm $s\mapsto (\|s\|^2+\|\db s\|^2+\|\overline\partial^{\,*}s\|^2)^{1/2}$ (cf.\ \cite[Lemma\,2.2]{MD02}).

\par Let us consider a defining function $\varrho$ of $X$ such that $|d\varrho|=1$ on $\partial X$. We denote by $\mathscr{L}_{\varrho}$ the Levi form of $\varrho$ (cf.\ \cite[Def.\,1.4.20]{MM07}). The Bochner-Kodaira-Nakano formula with boundary term \cite[Cor.\,1.4.22]{MM07} shows that for any $s\in B^{0,1}(\overline{X}\setminus\Sigma,L^p)$ we have
\begin{equation*}
\begin{split}
\frac{3}{2}\left(\norm{\db s}^2+\norm{\db^{\,*}s}^2\right)
\geq &\,
\left(R^{L^p\otimes K^*_X}(w_j,\ov{w}_k)\ov{w}^k\wedge i_{\ov{w}_j}s,
s\right)\\
&+\int_{\partial X}\mathscr{L}_{\varrho}(s,s)\,dv_{\partial X}
-\frac{1}{2}\big(\norm{\mathcal{T}^{\,*}\wi{s}}^2
+\norm{\ov{\mathcal{T}}\wi{s}}^2 
+\norm{\ov{\mathcal{T}}^{\,*}\wi{s}}^2\big).
\end{split}
\end{equation*}
Since $X$ is strongly pseudoconvex the boundary integral is non-negative. 
Therefore we obtain for all $s\in B^{0,1}(\overline{X}\setminus\Sigma,L^p)$ the estimate 
\begin{equation*}
\frac{3}{2}\left(\norm{\db s}^2+\norm{\db^{\,*}_{\chi}s}^2\right)
\geq \,
\left( R^{L^p\otimes K^*_X}(w_j,\ov{w}_k)\ov{w}^k\wedge i_{\ov{w}_j}s\,,
s\right)-\int_{\ov{X}\setminus\Sigma}\tau(x)|s|^2\,,
\end{equation*}
where $\tau$ is defined on $X_c\setminus\Sigma$ as in \eqref{e:tau}.
Making use of the compactness of $\ov{X}$ we obtain:  

\begin{Lemma}\label{l6.1} 
There exist constants $A,a,b>0$ such that $c_1(L,h_{\varepsilon}e^{-A\varphi})$ is a strictly positive $(1,1)$ current on a neighborhood of $\overline{X}$ and 
\begin{subequations}
  \begin{equation}\label{e:es11}	
  c_1(L,h_{\varepsilon}e^{-A\varphi})\geq a\Theta\,,
  \end{equation}
  \begin{equation}\label{e:es21}	
	c_1(L,h_{\varepsilon}e^{-A\varphi})+iR^{K_X^*}-\tau\Theta\geq -b\Theta\,,
\end{equation}
	\end{subequations}
on $\ov{X}\setminus\Sigma$.
\end{Lemma}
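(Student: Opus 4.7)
The plan is to follow the scheme of Lemma \ref{l:herm}, taking advantage of the compactness of $\overline{X}$. Writing
\[
c_1(L,h_{\varepsilon}e^{-A\varphi})=c_1(L,h_{\varepsilon})+\tfrac{A}{2}\,dd^c\varphi\,,
\]
I will fix a neighborhood $V$ of $\Sigma$ with $V\Subset U$ on which $h_\varepsilon$ already provides positivity against $\Theta$, then choose $A$ large enough to handle the compact complement inside a slightly enlarged sublevel set $X_{c'}$, where $0<c'<c$ is chosen so that $\overline{X}\subset X_{c'}$.

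First I would treat the neighborhood of $\Sigma$. Since $\Sigma$ is compact, the construction of Section \ref{SSS:heps} applies locally and, together with the hypothesis that $c_1(L,h)\geq0$ on $U$, produces a constant $a>0$ with $c_1(L,h_\varepsilon)\geq 2a\,\Theta$ on $V\setminus\Sigma$. Because $\varphi$ is psh one has $dd^c\varphi\geq 0$, so \eqref{e:es11} holds on $V\setminus\Sigma$ for every $A\geq 0$. Moreover, \cite[Lemma 6.2.1]{MM07} supplies a constant $C>0$ with $iR^{K_X^*}\geq-C\Theta$ and $0\leq\tau\leq C$ on $V\setminus\Sigma$; combined with the previous estimate, this yields \eqref{e:es21} there for any $b\geq C$.

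Next I handle the compact set $K:=\overline{X_{c'}}\setminus V_0$, where $V_0\Subset V$ is a smaller neighborhood of $\Sigma$. On $K$ every relevant object is well behaved: $\Theta$ is a smooth positive form, $c_1(L,h_\varepsilon)$ and $iR^{K_X^*}$ are smooth, $\tau$ is continuous and bounded, and because $\varphi$ is smooth and strictly psh on $X_c\setminus\Sigma$, compactness provides $\eta>0$ with $dd^c\varphi\geq\eta\Theta$ on $K$. Taking $A$ large enough so that $\tfrac{A\eta}{2}$ exceeds the finite deficits coming from $-c_1(L,h_\varepsilon)$, $-iR^{K_X^*}$, and $\tau\Theta$ then forces both \eqref{e:es11} and \eqref{e:es21} on $K$. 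Combining the two regions yields the estimates on $\overline{X}\setminus\Sigma$.

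For the strict positivity claim, I would observe that the local weights of $h_\varepsilon e^{-A\varphi}$ are psh on $X_{c'}\setminus\Sigma$ and locally upper bounded at points of $\Sigma$ by the construction of $h_\varepsilon$; hence they extend to psh functions on $X_{c'}$ (by the argument in the proof of Lemma \ref{L:gammap} and Lemma \ref{L:hext}), so $c_1(L,h_\varepsilon e^{-A\varphi})$ extends to a positive closed current on the neighborhood $X_{c'}$ of $\overline{X}$. The bound $\geq a\Theta$, coupled with $\Theta\geq\widetilde\Omega/2$ from Section \ref{SSS:Theta}, then gives strict dominance of a smooth positive form on $X_{c'}$. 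The main subtlety I anticipate is precisely verifying the estimate $c_1(L,h_\varepsilon)\geq 2a\,\Theta$ near $\Sigma$ in this non-compact ambient setting, but this reduces to the local content of \cite[Lemma 6.2.1]{MM07} together with the hypothesis that $c_1(L,h)$ is strictly positive on $U$.
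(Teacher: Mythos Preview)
Your proof is correct and follows essentially the same approach as the paper, which does not spell out a proof but only indicates that the compactness of $\overline{X}$ allows one to replace the convex increasing function $\chi$ of Lemma~\ref{l:herm} by a single constant $A$. Your two-region decomposition (a neighborhood of $\Sigma$ where $h_\varepsilon$ already dominates $\Theta$, and a compact complement where strict plurisubharmonicity of $\varphi$ and compactness supply the needed uniform lower bound on $dd^c\varphi$) is exactly the mechanism behind this simplification, and your additional discussion of the extension across $\Sigma$ and of strict positivity is a welcome elaboration of a point the paper leaves implicit. One small slip: early on you write ``the hypothesis that $c_1(L,h)\geq0$ on $U$'', but the construction of $h_\varepsilon$ requires the stronger assumption---which you correctly invoke later---that $c_1(L,h)$ is strictly positive on $U$.
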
  

\par Let us now fix $A>0$ as in Lemma \ref{l6.1}. Using \eqref{e:bkn1}, \eqref{e:es11} and \eqref{e:es21},
we deduce immediately the estimate \eqref{bk1.411} for any $s\in B^{0,1}(\overline{X}\setminus\Sigma,L^p)$ and, by density, for any $s\in\Dom(\db)\cap\Dom(\db^{\,*})\cap L^2_{0,1}(X\setminus\Sigma,L^p)$.
This shows that $\square_{p}$ acting on $L^2_{0,0}(X\setminus\Sigma,L^p)$ has a spectral gap as in \eqref{e:spgap}.
Therefore, the Bergman kernel function $P_p$ of $H^0_{(2)}(X\setminus\Sigma,L^p)$ has a full asymptotic expansion  
on any compact set of $X\setminus\Sigma$.

\par The preceding discussion leads to the following.

\begin{Theorem}\label{T:spsc}
Let $X$ be a strongly pseudoconvex domain with smooth boundary in a complex manifold $M$.
Let $(L,h)$ be a holomorphic line bundle on $M$ with singular metric $h$ which is smooth outside the exceptional set $\Sigma$ and which has strictly positive curvature current in a neighborhood $U$ of $\Sigma$. The conclusions of Theorems \ref{T:mt}, \ref{T:SZ} and \ref{T:rs} hold for the spaces $H^0_{(2)}(X\setminus\Sigma,L^p,h_{\varepsilon}e^{-A\varphi},\Theta^n)$ and for $\omega=c_1(L,h_\varepsilon e^{-A\varphi})$. 
\end{Theorem}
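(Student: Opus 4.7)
The plan is to deduce Theorem \ref{T:spsc} from Theorem \ref{T:mt} (and consequently Theorems \ref{T:SZ}, \ref{T:rs}) by verifying the hypotheses (A)--(C) and \eqref{e:mainhyp} in the present setting. The preceding discussion has already carried out the hard analytic work; what remains is a verification that these ingredients assemble correctly.

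First I would verify (A)--(C). Since $\Sigma$ is the exceptional set of $X$, it is a compact analytic subvariety contained in the interior of $X$, and we may choose a fixed smooth positive $(1,1)$ form $\Omega$ on $M$; this takes care of (A). Condition (B) holds for $(L, h_\varepsilon e^{-A\varphi})$: by the construction of $h_\varepsilon$ in Section \ref{SSS:heps} the metric is continuous on $M\setminus\Sigma$, and by Lemma \ref{l6.1} its curvature current on $\overline{X}\setminus\Sigma$ satisfies $\omega\geq a\Theta>0$, so it is (semi)positively curved. Condition (C) with respect to the volume form $\Theta^n = f\Omega^n$ follows from the local behavior of the Poincar\'e metric recorded at the end of Section \ref{SSS:Theta}: at regular points of $\Sigma$ of codimension one the form $\Theta^n$ blows up as $(|z_1|\log|z_1|)^{-2}d\lambda$, and away from $\Sigma$ the form $\Theta^n$ is a smooth positive multiple of $\Omega^n$, so $f$ is locally bounded below by a positive constant at each point of $(X\setminus\Sigma)\cup\Sigma^{n-1}_{reg}$.

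Second, I would establish the asymptotic condition \eqref{e:mainhyp}. This is precisely the conclusion of the preceding analytical discussion: the Bochner--Kodaira--Nakano formula with boundary term, together with the positivity of the Levi form on $\partial X$ (which makes the boundary integral $\int_{\partial X}\mathscr{L}_\varrho(s,s)\,dv_{\partial X}$ nonnegative on $B^{0,1}(\overline{X}\setminus\Sigma,L^p)$) and the curvature estimates \eqref{e:es11}--\eqref{e:es21} of Lemma \ref{l6.1}, yield the spectral gap \eqref{e:spgap} for $\square_p$ on $L^2_{0,0}(X\setminus\Sigma,L^p)$. Applying the localization argument of \cite[\S4.1.2]{MM07} and \cite[Th.\,4.2.9]{MM07} gives the full asymptotic expansion of $P_p$ on compact subsets of $X\setminus\Sigma$, and in particular $\frac{1}{p}\log P_p\to 0$ locally uniformly there.

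Third, once (A)--(C) and \eqref{e:mainhyp} are in place, Theorem \ref{T:mt} applies directly: the sections in $H^0_{(2)}(X\setminus\Sigma,L^p,h_\varepsilon e^{-A\varphi},\Theta^n)$ extend across $\Sigma$ to holomorphic sections of $L^p$ on $X$, the currents $\gamma_p$ extend to positive closed currents on $X$, and $\frac{1}{p^k}\gamma_p^k\to\omega^k$ weakly on $X$ for $k$ up to the codimension of $\Sigma$. Since $\overline X$ is compact and $\Sigma\subset X$ is a compact analytic set disjoint from $\partial X$, the weak convergence and the random zero-set statements of Theorems \ref{T:SZ} and \ref{T:rs} are obtained by applying the arguments of Section \ref{S:SZ} verbatim, working on the compact set $\overline X$ as the ambient space. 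The main obstacle, to the extent there is one, is making sure that the Bochner--Kodaira--Nakano estimate and the resulting spectral gap are indeed uniform up to the boundary $\partial X$ (rather than only on compact subsets of the open domain $X$); this is exactly what the use of $\db$-Neumann boundary conditions together with strong pseudoconvexity provides, via the nonnegativity of the boundary integral.
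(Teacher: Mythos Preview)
Your proposal is correct and follows essentially the same route as the paper: the paper presents Theorem \ref{T:spsc} as a direct consequence of ``the preceding discussion,'' i.e., verifying (A)--(C) from the construction of $\Theta$ and $h_\varepsilon$, and deducing \eqref{e:mainhyp} from the spectral gap obtained via the Bochner--Kodaira--Nakano formula with boundary term (Lemma \ref{l6.1}) and the localization argument of \cite{MM07}. Your write-up is in fact slightly more explicit than the paper's, particularly in flagging the role of strong pseudoconvexity in making the boundary integral nonnegative and in noting that the application of Theorems \ref{T:SZ} and \ref{T:rs} requires working on the compact set $\overline X$.
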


\begin{Remark}
In the same vein, we can obtain a variant of Theorem \ref{T:spsc} for Nadel multiplier sheaves. 
Assume that $X\Subset M$ is a strongly pseudoconvex domain as above. Let $(L,h)$ be a holomorphic line bundle on $M$ with singular metric $h$ which is smooth outside the exceptional set $\Sigma$. Assume for simplicity that the curvature current of $h$ is strictly positive in a whole neighborhood of $\overline{X}$. The conclusions of Theorems \ref{T:mt}, \ref{T:SZ} and \ref{T:rs} hold for the spaces $H^0(X,L^p\otimes\cI(h_p))$ {\rm(}defined as in \eqref{l2:mult}{\rm)} and for $\gamma=c_1(L,h)$. 
\end{Remark}

\end{document}